\newcommand{\F}{\mathbb{F}}
\DeclareMathOperator{\Frob}{Frob}
\newcommand{\To}{\longrightarrow}
\newcommand{\into}{\hookrightarrow}
\newcommand{\onto}{\twoheadrightarrow}
\newcommand{\isoto}{\stackrel{\sim}{\To}}
\newcommand{\p}{\mathfrak{p}}
\newcommand{\M}{\mathcal{M}}
\newcommand{\bigO}{\mathcal{O}}
\newcommand{\Z}{\mathbb{Z}}
\newcommand{\A}{\mathbb{A}}
\newcommand{\Q}{\mathbb{Q}}
\newcommand{\T}{\mathbb{T}}
\newcommand{\rhobar}{\overline{\rho}}
\newcommand{\Gal}{\operatorname{Gal}}
\newcommand{\GL}{\operatorname{GL}}
\newcommand{\PGL}{\operatorname{PGL}}
\newcommand{\HT}{\mathrm{HT}}
\newcommand{\Qbar}{\overline{\Q}}
\newcommand{\Qp}{\Q_p}
\newcommand{\Fp}{\F_p}
\newcommand{\Qpbar}{\overline{\Q}_p}
\newcommand{\Fpbar}{\overline{\F}_p}
\newcommand{\BrMod}{\operatorname{BrMod}}
\newcommand{\Ind}{\operatorname{Ind}}
\newcommand{\End}{\operatorname{End}}
\newcommand{\Aut}{\operatorname{Aut}}
\newcommand{\Fil}{\mathrm{Fil}}
\newcommand{\Sym}{\operatorname{Sym}}
\newcommand{\om}{\tilde{\omega}}
\newcommand{\omt}{\om_2}
\newcommand{\OO}{\calO}
\newcommand{\m}{\frakm}
\newcommand{\calO}{{\mathcal O}}
\newcommand{\frakm}{\mathfrak{m}}
\newcommand{\Zp}{\Z_{p}}
\newcommand{\st}{\mathrm{st}}
\DeclareMathOperator{\im}{im}
\newcommand{\G}{{\mathcal G}}
\newcommand{\univ}{\operatorname{univ}}
\newcommand{\dd}{\operatorname{dd}}
\newtheorem{thm}{Theorem}[section]
\newtheorem{corollary}[thm]{Corollary}
\newtheorem{cor}[thm]{Corollary}
\newtheorem{lem}[thm]{Lemma}
\newtheorem{prop}[thm]{Proposition}
 \theoremstyle{definition}
\newtheorem{defn}[thm]{Definition} \theoremstyle{remark}
\newtheorem{rem}[thm]{Remark} \numberwithin{equation}{subsection}
\begin{document}
\title[Serre weights: the totally ramified case]{Serre weights for mod $p$ Hilbert modular forms: the totally ramified case}
\author{Toby Gee} \email{tgee@math.harvard.edu} \address{Department of
  Mathematics, Harvard University}
\author{David Savitt} \email{savitt@math.arizona.edu}
\address{Department of Mathematics, University of Arizona}
\thanks{The authors were partially
  supported by NSF grants DMS-0841491 and DMS-0600871}

\begin{abstract}We study the possible weights of an irreducible 2-dimensional
modular mod $p$  representation of $\Gal(\overline{F}/F)$, where $F$
is a totally real field which is totally ramified at $p$, and the
representation is tamely ramified at the prime above $p$.  In most
cases we determine the precise list of possible weights; in the
remaining cases we determine the possible weights up to a short and
explicit list of exceptions.
\end{abstract}

\maketitle \tableofcontents
\section{Introduction}Let $p$ be a prime number. The study of the possible weights of a mod $p$
modular Galois representation was initiated by Serre in his famous
paper \cite{ser87}. This proposed a concrete conjecture (``the
weight part of Serre's conjecture'') relating the weights to the
restriction of the Galois representation to an inertia subgroup at
$p$. This conjecture was resolved (at least for $p>2$) by work of
Coleman-Voloch, Edixhoven and Gross (see \cite{MR1176206}).

More recently the analogous questions for Hilbert modular forms have
been a focus of much investigation, beginning with the seminal paper
\cite{bdj}. Let $F$ be a totally real field with absolute Galois group
$G_F$. Then to any irreducible modular
representation $$\rhobar:G_F\to\GL_2(\Fpbar)$$ there is associated a
set of weights $W(\rhobar)$, the set of weights in which $\rhobar$ is
modular (see section \ref{sec: notation} for the definitions of
weights and of what it means for $\rhobar$ to be modular of a certain
weight). Under the assumption that $p$ is unramified in $F$ the paper
\cite{bdj} associated to $\rhobar$ a set of weights $W^{?}(\rhobar)$, and
conjectured that $W^?(\rhobar)=W(\rhobar)$. Many cases of this
conjecture were proved in \cite{gee053}.

The set $W^?(\rhobar)$ depends only on the restrictions of $\rhobar$
to inertia subgroups at places dividing $p$. In the case that these
restrictions are tamely ramified, the conjecture is completely
explicit, while in the general case the set depends on some rather
delicate questions involving extensions of crystalline characters.

Schein \cite{scheinramified} has proposed a generalisation of the
conjecture of \cite{bdj} in the tame case, removing the restriction
that $p$ be unramified in $F$.  Not much is currently known about
this conjecture; \cite{scheinramified} proves some results towards
the implication $W(\rhobar)\subset W^?(\rhobar)$, but very little is
known about the harder converse implication in the case that $p$ is
ramified. It is clear that the techniques of \cite{gee053} will not
on their own extend to the general case, as they rely on
combinatorial results which are false if $p$ ramifies.

In this paper, we prove most cases of the conjecture of
\cite{scheinramified} in the case that~$p$ is totally ramified in
$F$. Our techniques do not depend on the fact that there is only a
single prime of $F$ above $p$, and they would extend to the case where
every prime of $F$ above $p$ has residue field $\Fp$ (or in
combination with the techniques of \cite{gee053}, to the case where
every prime of $F$ above $p$ is either unramified or totally
ramified). We have restricted to the case that $p$ is totally ramified
in order to simplify the exposition.

We assume throughout that $p$ is odd, and that
$\rhobar|_{G_{F(\zeta_p)}}$ is irreducible. We make a mild
additional assumption if $p=5$.  All of these restrictions are
imposed by our use of the modularity lifting theorems of
\cite{kis04} (or rather, by their use in \cite{gee061}). Let
$(p)=\p^e$ in $\bigO_F$, where $e=[F:\Q]$. Under these assumptions,
we are able to prove that if $\rhobar|_{G_{F_\p}}$ is irreducible,
then $W(\rhobar)=W^?(\rhobar)$. If $\rhobar|_{G_{F_\p}}$ is a sum of
two characters, then we show that $W(\rhobar)\subset W^?(\rhobar)$, and
that equality holds if $e\geq p$. If $e\leq p-1$ then we
prove
that the weights in $W^{?}(\rhobar)$ all occur except that we miss
between zero and four weights; under the extra hypothesis that $\rhobar$ has
an ordinary modular lift, we can usually (but not quite always) treat
these exceptions as well.

We establish that $W(\rhobar)\subset W^?(\rhobar)$ by a computation
using Breuil modules with descent data, in the same style as analogous
computations in the literature; we have to use a few tricks in
boundary cases, but these arguments are more or less standard.

For the harder converse, our techniques are roughly a
combination of those of \cite{gee053} and an argument due to Kevin
Buzzard, which uses a technique known as ``weight cycling''. This
argument was first written up in section 5 of \cite{taymero} in the
case that $p$ splits completely in $F$. The argument essentially
depends only on the residue field of primes dividing $p$, and thus
applies equally well in our totally ramified setting. It is the use
of this argument that entails our restriction to the totally
ramified case, rather than permitting arbitrary ramification. We note
that while it might be possible to prove some additional results for
small residue fields in cases with little ramification, it is clear
that if one wishes to work with arbitrarily ramified fields, then
our methods will only give complete results in the case of residue
field $\F_p$ (for example, there will be many situations where even if one
is told that $\rhobar$ is modular of all but one weight in
$W^?(\rhobar)$, our methods will be unable to prove modularity in this
remaining weight).

As in \cite{gee053}, the plan is to construct modular lifts of
$\rhobar$ which are potentially Barsotti-Tate of specific type, using
the techniques of Khare-Wintenberger, as explained in
\cite{gee061}. These techniques reduce the construction of such lifts
to the purely local problem of exhibiting a single potentially
Barsotti-Tate lift of
$\rhobar|_{G_{F_\p}}$ of the appropriate type. In the case that
$\rhobar|_{G_{F_\p}}$ is irreducible, writing down such a lift is
rather non-trivial; in fact, as far as we are aware, no-one has
written down such a lift in any case in which $e>1$. We accomplish
this by means of an explicit construction of a corresponding strongly
divisible module.

The immediate consequence of the existence of these lifts is that
$\rhobar$ is modular of one of two weights, the constituents of a
certain principal series representation. In \cite{gee053} we were able
to conclude that only one of these two weights was actually possible,
but in the totally ramified case $\rhobar$ is frequently modular of
both of these weights, so no such argument is possible. It is at this
point that we employ weight cycling. Crucially, we can frequently
ensure that our lift is non-ordinary, and when this holds weight
cycling ensures that $\rhobar$ is modular of both weights. The cases
where we cannot guarantee a non-ordinary lift are certain of those for which
$\rhobar|_{G_{F_\p}}$ is reducible and $e<p$, which is why our results
are slightly weaker in this case.

We note that our methods should also be applicable in the non-tame
case, and should give similar results, subject to the appropriate
local calculations. For explicit conjectures in this case
(``explicit'' in the sense of \cite{bdj}, i.e., in terms of certain
crystalline extensions) see the forthcoming \cite{geesavitt}.

We now detail the outline of the paper. In section \ref{sec:
notation} we give our initial definitions and notation. In
particular, we introduce spaces of algebraic modular forms on
definite quaternion algebras, and we explain what it means for
$\rhobar$ to be modular of a specific weight. Note that we work
throughout with these spaces of forms, rather than their analogues
for indefinite quaternion algebras as used in \cite{scheinramified}
or \cite{bdj}. While our results do not immediately go over to their
setting, our proofs do; both the results on the existence of
Barsotti-Tate lifts of specified type and the weight cycling
argument are available in that case (for the latter, see
\cite{MR2407230}). We restrict ourselves to the case of definite
quaternion algebras for ease of exposition, for example in order to
use the results of \cite{gee061} as a black box.

In section \ref{sec:tame lifts} we explain which tame lifts we will
need to consider, and the relationship between the existence of
modular lifts of specified types and the property of being modular
of a certain weight. This amounts to recalling certain concrete
instances of the local Langlands correspondence for $\GL_2$ and
local-global compatibility. All of this material is
completely standard.

We give an exposition of the weight cycling result in section
\ref{sec: weight cycling}, adapted to the situation at hand. In
particular, we combine weight cycling with the results of earlier
sections to give a result establishing that $\rhobar$ is
modular of a particular weight provided that it has a modular lift
which is potentially Barsotti-Tate  of a particular type and is
non-ordinary.

Having done this, we now need some concrete results on the existence
of (local) potential Barsotti-Tate representations of particular
type that lift $\rhobar |_{G_{F_\p}}$. We warm up for these calculations by establishing (in the tame
case) the inclusion $W(\rhobar)\subset W^?(\rhobar)$ in section
\ref{sec: necessity}. This uses a calculation with Breuil modules. In
section \ref{sec:niveau-1-case}
we produce the required lifts in
the case that $\rhobar|_{G_{F_\p}}$ is reducible. This case is
relatively straightforward, as we are able to use reducible lifts. The
irreducible case is considerably more challenging, and is completed in
sections \ref{sec:the lifting result in the
  residually irred case} and \ref{sec:niveau_two_conclusion}, where we explicitly construct the lifts by
writing down the corresponding strongly divisible modules.

Finally, in section \ref{main results} we combine these results with
the lifting techniques of \cite{gee061} and some combinatorial
arguments to prove the main theorems.

We are grateful to Florian Herzig and to the anonymous referee for a careful reading of the paper.

\section{Notation and assumptions}\label{sec: notation}Let $p$ be an odd prime. Fix an
algebraic closure $\Qbar$ of $\Q$, an algebraic closure  $\Qpbar$ of
$\Qp$, and an embedding $\Qbar\into\Qpbar$. We will consider all
finite extensions of $\Q$ (respectively $\Qp$) to be contained in
$\Qbar$ (respectively $\Qpbar$). If $K$ is such an extension, we let
$G_{K}$ denote its absolute Galois group $\Gal(\overline{K}/K)$. Let $F$ be a totally real field in which $p$ is
totally ramified, say $(p)=\p^{e}$. Choose a uniformiser
$\pi_{\p}\in\p$. Let $\rhobar:G_{F}\to\GL_{2}(\Fpbar)$ be a continuous
Galois representation. Assume from now on that
$\rhobar|_{G_{F(\zeta_{p})}}$ is absolutely irreducible. If $p=5$ and
the projective image of $\rhobar$ is isomorphic to $\PGL_{2}(\F_{5})$,
assume further that $[F(\zeta_{5}):F]=4$. We normalise the
isomorphisms of local class field theory so that a uniformiser
corresponds to a geometric Frobenius element.

We wish to discuss the Serre weights of $\rhobar$. We choose to work
with totally definite quaternion algebras. We recall the basic
definitions and results that we need, adapted to the particular case
where $F$ is totally ramified at $p$.

Let $D$ be a quaternion algebra with center $F$ which is ramified at all infinite places of $F$ and at a set $\Sigma$ of finite places, which does not contain $\p$. Fix a maximal order $\bigO_D$ of $D$ and for each finite place $v\notin\Sigma$ fix an isomorphism $(\bigO_D)_v\isoto M_2(\bigO_{F_v})$. For any finite place $v$ let $\pi_{v}$ denote a uniformiser of $F_v$.

Let $U=\prod_v U_v\subset (D\otimes_F\mathbb{A}^f_F)^\times$ be a
compact open subgroup, with each $U_v\subset
(\bigO_D)^\times_v$. Furthermore, assume that $U_v=(\bigO_D)_v^\times$
for all $v\in\Sigma$.

Take $A$ a topological $\Z_p$-algebra. Fix a continuous representation
$\sigma:U_{\p}\to\Aut(W_{\sigma})$ with $W_{\sigma}$ a finite free
$A$-module. We regard $\sigma$ as a representation of $U$ in the
obvious way (that is, we let $U_v$ act trivially if $v\nmid p$). Fix
also a character $\psi:F^\times\backslash(\mathbb{A}^f_F)^\times\to A^\times$
such that for any finite place $v$ of $F$,
$\sigma|_{U_v\cap\bigO_{F_v}^\times}$ is multiplication by
$\psi^{-1}$. Then we can think of $W_\sigma$ as a
$U(\mathbb{A}_F^f)^\times$-module by letting $(\mathbb{A}_F^f)^\times$
act via $\psi^{-1}$.

Let $S_{\sigma,\psi}(U,A)$ denote the set of continuous
functions $$f:D^\times\backslash(D\otimes_F\mathbb{A}_F^f)^\times\to
W_\sigma$$ such that for all $g\in (D\otimes_F\mathbb{A}_F^f)^\times$
we have $$f(gu)=\sigma(u)^{-1}f(g)\text{ for all }u\in
U,$$ $$f(gz)=\psi(z)f(g)\text{ for all
}z\in(\mathbb{A}_F^f)^\times.$$We can write
$(D\otimes_F\mathbb{A}_F^f)^\times=\coprod_{i\in I}D^\times
t_iU(\mathbb{A}_F^f)^\times$ for some finite index set $I$ and some
$t_i\in(D\otimes_F\mathbb{A}_F^f)^\times$. Then we
have $$S_{\sigma,\psi}(U,A)\isoto\oplus_{i\in
  I}W_\sigma^{(U(\mathbb{A}_F^f)^\times\cap t_i^{-1}D^\times
  t_i)/F^\times},$$the isomorphism being given by the direct sum of
the maps $f\mapsto f(t_i)$. From now on we make the following
assumption:$$\text{For all
}t\in(D\otimes_F\mathbb{A}_F^f)^\times\text{ the group
}(U(\mathbb{A}_F^f)^\times\cap t^{-1}D^\times t)/F^\times\text{ is
  trivial.}$$ Without changing $U_\p$, one can always replace $U$ by a
subgroup satisfying the above assumptions (\emph{cf.} section 3.1.1 of
\cite{kis07}). Under these assumptions $S_{\sigma,\psi}(U,A)$ is a
finite free $A$-module, and the functor $W_\sigma\mapsto
S_{\sigma,\psi}(U,A)$ is exact in $W_\sigma$.

We now define some Hecke algebras. Let $S$ be a set of finite places
containing $\Sigma$, $\p$, and the primes $v$ of $F$ such that $U_v
\neq (\bigO_D)_v^\times$.  Let
$\T^{\operatorname{univ}}_{S,A}=A[T_v,S_v]_{v\notin S}$ be the
commutative polynomial ring in the formal variables $T_v$,
$S_v$. Consider the left action of $(D\otimes_F\mathbb{A}_F^f)^\times$
on $W_\sigma$-valued functions on $(D\otimes_F\mathbb{A}_F^f)^\times$
given by $(gf)(z)=f(zg)$. Then we make $S_{\sigma,\psi}(U,A)$ a
$\T^{\operatorname{univ}}_{S,A}$-module by letting $S_v$ act via the
double coset $U\bigl(
\begin{smallmatrix}
    \pi_{v}&0\\0&\pi_{v}
\end{smallmatrix}
\bigr)U$ and $T_v$ via $U\bigl(
\begin{smallmatrix}
    \pi_{v}&0\\0&1
\end{smallmatrix}
\bigr)U$. These are independent of the choices of $\pi_{v}$. We will write $\T_{\sigma,\psi}(U,A)$ or $\T_{\sigma,\psi}(U)$ for the image of $\T^{\operatorname{univ}}_{S,A}$ in $\End S_{\sigma,\psi}(U,A)$.

Note that if $\sigma$ is trivial, then we may also define Hecke
operators at $\p$. We let $U_{\pi_\p}$ be the Hecke operator given by the
double coset $U\bigl(
\begin{smallmatrix}
    \pi_{\p}&0\\0& 1
\end{smallmatrix}
\bigr)U$ and let $V_{\pi_\p}$ be given by $U\bigl(
\begin{smallmatrix}
    1&0\\0&\pi_\p
\end{smallmatrix}
\bigr)U$. Note that these may depend on the choice of $\pi_\p$.

Let $\mathfrak{m}$ be a maximal ideal of
$\T^{\operatorname{univ}}_{S,A}$. We say that $\mathfrak{m}$ is in the
support of $(\sigma,\psi)$ if $S_{\sigma,\psi}(U,A)_\mathfrak{m}\neq
0$. Now let $\bigO$ be the ring of integers in $\Qpbar$, with residue
field $\F=\Fpbar$, and suppose that $A=\bigO$ in the above discussion
and that $\sigma$ has open kernel. Consider a maximal ideal
$\mathfrak{m}\subset\T^{\operatorname{univ}}_{S,\bigO}$ which is
induced by a maximal ideal of $\T_{\sigma,\psi}(U,\bigO)$. Then (see
for example section 3.4.1 of \cite{kis04}) there
is a semisimple Galois representation
$\overline{\rho}_\mathfrak{m}:G_F\to\GL_2(\F)$ associated to
$\mathfrak{m}$ which is characterised up to conjugacy by the
property that if $v\notin S$ then $\rhobar_\mathfrak{m}|_{G_{F_v}}$ is
unramified, and if $\Frob_v$ is an arithmetic Frobenius
at $v$ then the trace of $\overline{\rho}_\mathfrak{m}(\Frob_v)$ is
the image of $T_v$ in $\F$.

We are now in a position to define what it means for a
representation to be modular of some weight. Let $F_{\p}$ have ring
of integers $\bigO_{F_\p}$, and let $\sigma$ be an irreducible
$\F$-representation of $\GL_2(\F_{p})$, so $\sigma$ is isomorphic to
$\sigma_{m,n} := \det^m\otimes\Sym^n\F^2$ for some $0\leq m< p-1$,
$0\leq n\leq p-1$. Throughout the paper we allow $m,n$ to vary over
these ranges.   We also denote by $\sigma$ the representation of
$\GL_2(\bigO_{F_{\p}})$ induced by the surjection
$\bigO_{F_{\p}}\onto \F_{p}$.
\begin{defn}
    We say that $\overline{\rho}$ is modular of weight $\sigma$ if
        for some $D$, $S$, $U$, $\psi$, and $\mathfrak{m}$ as above,
        with $U_\p=\GL_2(\bigO_{F_\p})$, we have $S_{\sigma,\psi}(U,\F)_\mathfrak{m}\neq 0$ and $\overline{\rho}_\mathfrak{m}\cong\overline{\rho}$.
\end{defn}
Assume from now on that $\rhobar$ is modular of some weight, and fix
$D$, $S$, $U$, $\psi$, $\mathfrak{m}$ as in the definition. Write
$W(\rhobar)$ for the set of weights $\sigma$ for which $\rhobar$ is
modular of weight $\sigma$.

One can gain information about the weights associated to a particular
Galois representation by considering lifts to characteristic zero. The
key is the following basic lemma.
\begin{lem}
  \label{432}Let
  $\psi:F^{\times}\backslash(\A_{F}^f)^{\times}\to\bigO^{\times}$ be a
  continuous character, and write $\overline{\psi}$ for the composite
  of $\psi$ with the projection $\bigO^{\times}\to \F^{\times}$. Fix a
  representation $\sigma$ of $U_\p$ on a finite free $\bigO$-module
  $W_{\sigma}$, and an irreducible representation $\sigma'$ of $U_\p$ on a
  finite free $\F$-module $W_{\sigma'}$. Suppose that we have
  $\sigma|_{U_v\cap\bigO_{F_v}^\times}=\psi^{-1}|_{U_v\cap\bigO_{F_v}^\times}$
  and
  $\sigma'|_{U_v\cap\bigO_{F_v}^\times}=\overline{\psi}^{-1}|_{U_v\cap\bigO_{F_v}^\times}$
  for all finite places $v$.

    Let $\mathfrak{m}$ be a maximal ideal of $\mathbb{T}_{S,\bigO}^{\univ}$.

    Suppose that $W_{\sigma'}$ occurs as a $U_\p$-module subquotient of ${W}_{\overline{\sigma}}:=W_\sigma\otimes\F$. If $\mathfrak{m}$ is in the support of $(\sigma',\overline{\psi})$, then $\mathfrak{m}$ is in the support of $(\sigma,\psi)$.

    Conversely, if $\mathfrak{m}$ is in the support of
        $(\sigma,\psi)$, then $\mathfrak{m}$ is in the support of
        $(\sigma',\overline{\psi})$ for some irreducible $U_\p$-module
        subquotient $W_{\sigma'}$ of ${W}_{\overline{\sigma}}$.
\end{lem}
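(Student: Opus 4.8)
The plan is to deduce both halves of the lemma from the two facts recorded in the excerpt: the functor $W\mapsto S_{W,\overline{\psi}}(U,\F)$ is exact, and there is a base-change isomorphism $S_{\sigma,\psi}(U,\bigO)\otimes_\bigO\F\isoto S_{\overline{\sigma},\overline{\psi}}(U,\F)$ (both sides being identified, compatibly, with $\bigoplus_i(W_\sigma\otimes_\bigO\F)$ via $f\mapsto(f(t_i))_i$, the invariants being trivial under our running assumption). I would first record that all the maps I use are Hecke-equivariant: this is \emph{not} visible from the explicit decomposition $S_{\sigma,\psi}(U,A)\isoto\bigoplus_i W_\sigma$, which is not equivariant, but it holds because these maps are all induced by postcomposition with morphisms of the coefficient module, whereas $T_v$ and $S_v$ act through the argument of $f$ (via $(gf)(z)=f(zg)$) and so commute with postcomposition. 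In particular a morphism $W\to W'$ of finite-dimensional $\F$-representations of $U_\p$, on each of which every $U_v\cap\bigO_{F_v}^\times$ acts by the scalar $\overline{\psi}^{-1}$, induces a Hecke-equivariant map $S_{W,\overline{\psi}}(U,\F)\to S_{W',\overline{\psi}}(U,\F)$; since any $U_\p$-subquotient of such a $W$ retains this property, the intermediate coefficient systems below are all legitimate. Finally, localisation at $\mathfrak{m}$ is exact, hence compatible with finite filtrations.

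For the first assertion I would write $W_{\sigma'}\cong W_2/W_1$ with $W_1\subseteq W_2\subseteq W_{\overline{\sigma}}$ a chain of $U_\p$-submodules, and apply the exact functor $W\mapsto S_{W,\overline{\psi}}(U,\F)$: this produces Hecke-equivariant inclusions $S_{W_1,\overline{\psi}}(U,\F)\subseteq S_{W_2,\overline{\psi}}(U,\F)\subseteq S_{\overline{\sigma},\overline{\psi}}(U,\F)$ together with an isomorphism $S_{W_2,\overline{\psi}}(U,\F)/S_{W_1,\overline{\psi}}(U,\F)\cong S_{\sigma',\overline{\psi}}(U,\F)$, so that $S_{\sigma',\overline{\psi}}(U,\F)$ is a Hecke-module subquotient of $S_{\overline{\sigma},\overline{\psi}}(U,\F)$. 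Localising at $\mathfrak{m}$, if $S_{\sigma',\overline{\psi}}(U,\F)_\mathfrak{m}\neq 0$ then $S_{\overline{\sigma},\overline{\psi}}(U,\F)_\mathfrak{m}\neq 0$, hence $S_{\sigma,\psi}(U,\bigO)_\mathfrak{m}\otimes_\bigO\F\neq 0$ and a fortiori $S_{\sigma,\psi}(U,\bigO)_\mathfrak{m}\neq 0$; that is, $\mathfrak{m}$ is in the support of $(\sigma,\psi)$. (Irreducibility of $\sigma'$ plays no role here.)

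For the converse I would fix a Jordan--H\"older filtration of $W_{\overline{\sigma}}$ as a $U_\p$-module --- possible since $W_{\overline{\sigma}}$ is finite-dimensional over $\F$, hence of finite length --- with irreducible graded pieces carrying irreducible representations $\sigma_1,\dots,\sigma_k$ of $U_\p$. By exactness, $S_{\overline{\sigma},\overline{\psi}}(U,\F)$ inherits a finite filtration of Hecke-modules with graded pieces $S_{\sigma_i,\overline{\psi}}(U,\F)$. Now suppose $\mathfrak{m}$ is in the support of $(\sigma,\psi)$, i.e. $S_{\sigma,\psi}(U,\bigO)_\mathfrak{m}\neq 0$. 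Since $S_{\sigma,\psi}(U,\bigO)$ is finitely generated over $\bigO$, hence over the local ring $(\T_{S,\bigO}^{\univ})_\mathfrak{m}$, and since $\mathfrak{m}$ has residue characteristic $p$ (so contains the maximal ideal of $\bigO$), Nakayama's lemma gives $S_{\sigma,\psi}(U,\bigO)_\mathfrak{m}\otimes_\bigO\F\neq 0$, i.e. $S_{\overline{\sigma},\overline{\psi}}(U,\F)_\mathfrak{m}\neq 0$. Localising the filtration at $\mathfrak{m}$, some $S_{\sigma_i,\overline{\psi}}(U,\F)_\mathfrak{m}$ must be nonzero, and then $W_{\sigma'}:=W_{\sigma_i}$ is the desired irreducible $U_\p$-subquotient of $W_{\overline{\sigma}}$ with $\mathfrak{m}$ in the support of $(\sigma',\overline{\psi})$.

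I do not anticipate a real obstacle here: the content is entirely formal once one has exactness of $W\mapsto S_{W,\overline{\psi}}(U,\F)$ and good behaviour under reduction modulo $p$. The two points one must not gloss over are (i) that the maps in sight are Hecke-equivariant --- for the reason above, not via the explicit (non-equivariant) description of $S_{\sigma,\psi}(U,A)$ --- and (ii) the Nakayama argument passing from nonvanishing of $S_{\sigma,\psi}(U,\bigO)_\mathfrak{m}$ to nonvanishing of its reduction; beyond these, everything is bookkeeping with exact sequences and finite filtrations.
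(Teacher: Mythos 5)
Your argument is correct, and it supplies what the paper leaves implicit: the paper's proof of this lemma is purely by citation (the first assertion ``as in Lemma 3.1.4 of \cite{kis04}'', the second from Proposition 1.2.3 of \cite{as862}), and what you have written out --- Hecke-equivariance of maps induced by postcomposition in the coefficients (since $T_v$, $S_v$ act through translation of the argument), exactness of $W \mapsto S_{W,\overline{\psi}}(U,\F)$ together with the equivariant identification $S_{\sigma,\psi}(U,\bigO)\otimes_\bigO\F \cong S_{\overline{\sigma},\overline{\psi}}(U,\F)$ for the first part, and a Jordan--H\"older filtration plus Nakayama for the second --- is exactly the content of those references, unpacked. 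So the route is the same in substance; yours is simply self-contained, which is a reasonable trade against the paper's brevity.

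One point to make airtight is your parenthetical claim that $\mathfrak{m}$ has residue characteristic $p$, hence contains the maximal ideal of $\bigO$: this is not automatic for an arbitrary maximal ideal of the polynomial ring $\T^{\univ}_{S,\bigO}$ (for instance $(pT_v-1)$ sits inside a maximal ideal with residue field of characteristic $0$), and Nakayama needs it. It is, however, forced by your hypothesis in the converse direction: since $S_{\sigma,\psi}(U,\bigO)$ is a finitely generated $\T^{\univ}_{S,\bigO}$-module, $S_{\sigma,\psi}(U,\bigO)_{\mathfrak{m}}\neq 0$ implies $\mathfrak{m}$ contains the annihilator, so $\mathfrak{m}$ is the preimage of a maximal ideal of $\T_{\sigma,\psi}(U,\bigO)$; the latter is a subring of $\End_\bigO$ of a finite free $\bigO$-module, hence integral over $\bigO$, and a maximal ideal of an integral extension contracts to a maximal ideal of $\bigO$. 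Thus $\mathfrak{m}$ does contain the maximal ideal of $\bigO$ and your Nakayama step goes through (alternatively, one may just note that the lemma is only ever applied to $\mathfrak{m}$ with residue field $\F$). With that one-line justification added, I see no gap.
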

\begin{proof}
  The first part is proved just as in Lemma 3.1.4 of \cite{kis04}, and
  the second part follows from Proposition 1.2.3 of \cite{as862}.
\end{proof}

\section{Tame lifts}\label{sec:tame lifts} We recall some group-theoretic results from section 3 of \cite{cdt}. First, recall the irreducible finite-dimensional representations of $\GL_2(\F_p)$ over $\overline{\Q}_p$. Once one fixes an embedding $\F_{p^2}\into M_2(\F_p)$, any such representation is equivalent to one in the following list:
\begin{itemize}
    \item For any character $\chi:\F_p^\times\to\overline{\Q}_p^\times$, the representation $\chi\circ\det$.
    \item For any $\chi:\F_p^\times\to\overline{\Q}_p^\times$, the representation $\operatorname{sp}_\chi=\operatorname{sp}\otimes(\chi\circ\det)$, where $\operatorname{sp}$ is the representation of $\GL_2(\F_p)$ on the space of functions $\mathbb{P}^1(\F_p)\to\overline{\Q}_p$ with average value zero.
    \item For any pair $\chi_1\neq\chi_2:\F_p^\times\to\overline{\Q}_p^\times$, the representation $$I(\chi_1,\chi_2)=\Ind_{B(\F_p)}^{\GL_2(\F_p)}(\chi_1\otimes\chi_2),$$ where $B(\F_p)$ is the Borel subgroup of upper-triangular matrices in $\GL_2(\F_p)$, and $\chi_1\otimes\chi_2$ is the character $$ \left(
    \begin{array}{cc}
        a & b \\
        0 & d \\
    \end{array}
    \right)\mapsto\chi_1(a)\chi_2(d). $$
    \item For any character $\chi:\F_{p^2}^\times\to\overline{\Q}_p^\times$ with $\chi\neq\chi^p$, the cuspidal representation $\Theta(\chi)$ characterised by $$\Theta(\chi)\otimes\operatorname{sp}\cong\Ind_{\F_{p^2}^\times}^{\GL_2(\F_p)}\chi.$$
\end{itemize}
We now recall the reductions mod $p$ of these representations. Let $\sigma_{m,n}$ be the irreducible $\overline{\F}_p$-representation $\det^m\otimes\Sym^{n}\F^{2}$, with $0\leq m<p-1$, $0\leq n\leq p-1$. Then we have:
\begin{lem}
    \label{441}Let $L$ be a finite free $\bigO$-module with an action
    of $\GL_2(\F_p)$ such that $V=L\otimes_\bigO\overline{\Q}_p$ is
    irreducible. Let $\tilde{a}$ denote the Teichm\"{u}ller lift of $a$.
    \begin{enumerate}
        \item If $V\cong\chi\circ\det$ with $\chi(a)=\tilde{a}^m$, then $L\otimes_\bigO \F\cong\sigma_{m,0}$.
        \item If $V\cong\operatorname{sp}_{\chi}$ with $\chi(a)=\tilde{a}^m$, then $L\otimes_\bigO \F\cong\sigma_{m,p-1}$.
        \item If $V\cong I(\chi_1,\chi_2)$ with $\chi_i(a)=\tilde{a}^{m_i}$ for distinct $m_i\in\Z/(p-1)\Z$, then $L\otimes_\bigO \F$ has two Jordan-H\"{o}lder subquotients: $\sigma_{m_2,\{m_1-m_2\}}$ and $\sigma_{m_1,\{m_2-m_1\}}$ where $0<\{m\}<p-1$ and $\{m\}\equiv m \text{ mod }p-1$.
        \item If $V\cong\Theta(\chi)$ with $\chi(c)=\tilde{c}^{i+(p+1)j}$ where $1\leq i\leq p$ and $j\in\Z/(p-1)\Z$, then $L\otimes_\bigO \F$ has two Jordan-H\"{o}lder subquotients: $\sigma_{1+j,i-2}$ and $\sigma_{i+j,p-1-i}$. Both occur unless $i=p$ (when only the first occurs), or $i=1$ (when only the second one occurs), and in either of these cases $L\otimes_\bigO \F\cong\sigma_{1+j,p-2}$ .
    \end{enumerate}
\end{lem}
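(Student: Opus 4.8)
The plan is to reduce the computation of the mod $p$ reduction of each irreducible $\overline{\Q}_p$-representation $V$ of $\GL_2(\F_p)$ to the Brauer character of $V$ and the known Brauer characters of the $\sigma_{m,n}$. First I would recall that, since all the $\sigma_{m,n}$ with $0 \leq m < p-1$, $0 \leq n \leq p-1$ form a complete set of irreducible $\overline{\F}_p$-representations of $\GL_2(\F_p)$ (away from the single relation arising from $\Sym^{p-1}$ versus twists, which is why the ranges are chosen as they are), the semisimplification of $L \otimes_\bigO \overline{\F}_p$ is determined by the reduction mod $p$ of the Brauer character of $V$ evaluated on $p$-regular (semisimple) conjugacy classes. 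So the whole lemma is a finite, purely character-theoretic bookkeeping exercise once one has the character table of $\GL_2(\F_p)$ and the Brauer characters of the $\sigma_{m,n}$ in hand.

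The key steps, in order, would be: (1) write down (or cite from \cite{cdt}, section 3) the values of the ordinary characters of $\chi \circ \det$, $\operatorname{sp}_\chi$, $I(\chi_1,\chi_2)$, and $\Theta(\chi)$ on the $p$-regular classes, namely the central elements and the split semisimple classes $\operatorname{diag}(a,d)$ with $a \neq d$ (the unipotent and nonsplit classes are $p$-singular and so are irrelevant for the Brauer character); (2) compute the Brauer character of $\sigma_{m,n} = \det^m \otimes \Sym^n \F^2$ on these same classes — on $\operatorname{diag}(a,d)$ the value is $\tilde a^m \tilde d^m (\tilde a^n + \tilde a^{n-1}\tilde d + \cdots + \tilde d^n)$, a truncated geometric series which one sums as $(\tilde a^{n+1} - \tilde d^{n+1})/(\tilde a - \tilde d)$ times $(\tilde a \tilde d)^m$; (3) for each of the four cases, match the reduction of the ordinary character against a nonnegative integer combination of these Brauer characters. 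Case (1) is immediate; case (2), $\operatorname{sp}_\chi$, uses that $\operatorname{sp} \oplus \mathbf{1} = \Ind_B^{\GL_2} \mathbf{1}$ has Brauer character constantly $p+1$ on split classes hence reduces to something of dimension $p$, forcing $\sigma_{m,p-1}$ (note $\Sym^{p-1}\F^2$ has dimension $p$, not $p+1$, since it is already the reduction of a $p$-dimensional Steinberg-type object); cases (3) and (4) require solving the small linear system to see exactly two Jordan–Hölder factors appear, and tracking the degenerate sub-cases ($m_1 \equiv m_2$ is excluded for $I(\chi_1,\chi_2)$; $i = 1$ or $i = p$ for $\Theta(\chi)$, where the two putative factors $\sigma_{1+j,i-2}$ and $\sigma_{i+j,p-1-i}$ collide or one becomes $\Sym^{-1}$ or $\Sym^p$ and must be reinterpreted, yielding the single factor $\sigma_{1+j,p-2}$).

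The main obstacle is the bookkeeping in case (4), the cuspidal representations $\Theta(\chi)$: one must correctly handle the identity $\Theta(\chi) \otimes \operatorname{sp} \cong \Ind_{\F_{p^2}^\times}^{\GL_2(\F_p)} \chi$, extract the Brauer character of $\Theta(\chi)$ from it (dividing by the Brauer character of $\operatorname{sp}$, which is invertible on $p$-regular classes), and then recognize the resulting expression — a difference of two truncated geometric series in $\tilde a, \tilde d$ after writing $\chi(c) = \tilde c^{\,i + (p+1)j}$ and restricting $\chi$ to $\F_p^\times \hookrightarrow \F_{p^2}^\times$ — as $(\tilde a \tilde d)^j$ times $[\text{Brauer char of } \Sym^{i-2}] + (\tilde a \tilde d)^{i+j-1}\cdot[\ldots]$, i.e. as $\sigma_{1+j,i-2} + \sigma_{i+j,p-1-i}$. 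The boundary cases $i=1,p$ are where $\Sym^{i-2}$ or $\Sym^{p-1-i}$ formally has negative exponent; there one re-derives from the induced-representation identity directly that only one constituent survives and equals $\sigma_{1+j,p-2}$, using the relation $\Sym^{p-2}\F^2 \cong \Sym^{p}\F^2 \otimes \det^{-1}$ (equivalently, that in characteristic $p$ the module $\Sym^p \F^2$ is not irreducible but has $\Sym^{p-2}\otimes\det$ as a constituent). Everything else is routine arithmetic with Teichmüller lifts and geometric series.
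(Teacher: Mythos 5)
The paper itself gives no argument here---it simply cites Lemma 3.1.1 of \cite{cdt}---so a self-contained Brauer-character computation of the kind you outline is a perfectly reasonable route, and it is essentially how the cited result is established. However, your execution has a genuine error that would derail the computation: you assert that the non-split semisimple classes are $p$-singular and hence irrelevant to the Brauer character. They are not. An element of $\GL_2(\F_p)$ whose characteristic polynomial is irreducible has order dividing $p^2-1$, which is prime to $p$, so these ``elliptic'' classes are $p$-regular and Brauer characters must be evaluated on them. A dimension count shows you cannot do without them: the number of irreducible $\Fbar_p$-representations $\sigma_{m,n}$ is $p(p-1)$, equal to the total number of $p$-regular classes (central, split regular, and non-split regular), whereas the central and split regular classes together number only $p(p-1)/2$; hence the restriction of Brauer characters to central and split classes is \emph{not} injective on the Grothendieck group, and matching characters there does not determine the Jordan--H\"older factors.

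The omission is fatal precisely in the case you identify as the hardest, case (4): the character of $\Theta(\chi)$ vanishes on every split regular semisimple class, so central and split values tell you only the dimension $p-1$ and the central character, which do not single out the pair $\sigma_{1+j,i-2}$, $\sigma_{i+j,p-1-i}$ (indeed, one checks that the predicted pair already sums to $0$ mod $p$ on split classes, so those classes carry no distinguishing information). The identification of the constituents comes from matching on the non-split classes, where $\Theta(\chi)$ has character $-(\chi(c)+\chi(c^p))$ and $\sigma_{m,n}$ has Brauer character $\tilde{c}^{(p+1)m}\bigl(\tilde{c}^{\,n+1}-\tilde{c}^{\,p(n+1)}\bigr)/\bigl(\tilde{c}-\tilde{c}^{\,p}\bigr)$. (Two smaller slips in the same vein: the character of $\Ind_B^{\GL_2(\F_p)}\mathbf{1}$ on a non-central split class is $2$, not $p+1$, and there is no ``relation'' among the $\sigma_{m,n}$ in the stated ranges---there are exactly $p(p-1)$ of them and they are pairwise non-isomorphic.) With the non-split classes restored, your strategy goes through and recovers the lemma; as written, it cannot.
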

\begin{proof}
    This is Lemma 3.1.1 of \cite{cdt}.
\end{proof}



In what follows, we will sometimes consider the above
representations as representations of $\GL_{2}(\bigO_{F_{\p}})$ via
the natural projection map.

We now recall some definitions relating to potentially semistable
lifts of particular type. We use the conventions of \cite{sav04}.

\begin{defn}\label{defn:barsotti tate lifts} Let $\tau$ be an inertial
  type. We say that a lift $\rho$ of $\rhobar|_{G_{F_{\p}}}$ is
 parallel potentially Barsotti-Tate (respectively parallel potentially semistable) of type $\tau$ if $\rho$ is potentially
  Barsotti-Tate (respectively potentially semistable with all Hodge-Tate
  weights equal to 0 or 1) with $\det\rho=\epsilon\psi$, where
  $\epsilon$ denotes the cyclotomic character and 
 $\psi$ is some  finite order character of order
  prime to $p$, and moreover the corresponding
  Weil-Deligne representation, when restricted to $I_{F_{\p}}$, is
  isomorphic to $\tau$. \end{defn}

Note that for a two-dimensional de Rham representation with all
Hodge-Tate weights equal to $0$ or $1$, the condition that all pairs of labeled
Hodge-Tate weights are $\{0,1\}$ is equivalent to the condition that 
the determinant is the product of the cyclotomic character, a finite
order character, and an unramified character; the condition of being
parallel is slightly stronger than this.

\begin{defn}
  \label{defn:ordinary}
  We say that a representation $\rho:G_{F_\p}\to\GL_2(\Qpbar)$ is
  ordinary if $\rho|_{I_{F_\p}}$ is an extension of a finite order
  character by a finite order character times the cyclotomic
  character.
We say that a
  representation $\rho:G_F\to\GL_2(\Qpbar)$ is ordinary if
  $\rho|_{G_{F_\p}}$ is ordinary.
\end{defn}
We now need some  special cases of the inertial local Langlands
correspondence of Henniart (see the appendix to \cite{bm}). If
$\chi_{1}\neq\chi_{2}: \F_{p}^{\times}\to\bigO^{\times}$, let
$\tau_{\chi_{1},\chi_{2}}$ be the inertial type
$\chi_{1}\oplus\chi_{2}$ (considered as a representation of
$I_{F_{\p}}$ via local class field theory). Then we let
$\sigma(\tau_{\chi_{1},\chi_{2}})$ be a representation on a finite
$\bigO$-module given by taking a lattice in $I(\chi_{1},\chi_{2})$. If
$\chi:\F_{p}^{\times}\to\bigO^{\times}$, we let
$\tau_{\chi}=\chi\oplus\chi$, and $\sigma(\tau_{\chi})$ be
$\chi\circ\det$.

If $\chi:\F_{p^2}^{\times} \to\bigO^{\times}$ with
$\chi\neq\chi^p$, we let $\tau_{\chi,\chi^p}=\chi\oplus\chi^p$
(again, regarded as a representation of $I_{F_\p}$ via local class
field theory), and we let $\sigma(\tau_{\chi,\chi^p})$ be a
representation on  a finite $\bigO$-module given by taking a lattice
in $\Theta(\chi)$.


\begin{lem}\label{lem:typesversusweights}Fix a type $\tau$ as above
  (i.e., $\tau=\tau_{\chi_{1},\chi_{2}}$, $\tau_{\chi}$, or
  $\tau_{\chi,\chi^p}$). Suppose that $\rhobar$ is modular of weight
  $\sigma$, and that $\sigma$ is a $\GL_{2}(\Fp)$-module subquotient
  of $\sigma(\tau)\otimes_{\bigO} \F$. Then $\rhobar$ lifts to a
  modular Galois representation which is parallel potentially
  Barsotti-Tate of type $\tau$ at $\p$. Similarly, if $\rhobar$ is
  modular of weight $\sigma_{m,p-1}$, then $\rhobar$ lifts to a
  modular Galois representation which is parallel potentially
  semistable of type $\tilde{\omega}^m\oplus\tilde{\omega}^m$ (see the
  beginning of section \ref{sec: necessity} for the definition of the
  fundamental character $\omega$).  Conversely, if $\rhobar$ lifts to
  a modular Galois representation which is parallel potentially
  Barsotti-Tate of type $\tau$ at $\p$, then $\rhobar$ is modular of
  weight $\sigma$ for some $\GL_{2}(\Fp)$-module subquotient $\sigma$
  of $\sigma(\tau)\otimes_{\bigO} \F$.

\end{lem}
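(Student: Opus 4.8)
The plan is to package together two standard inputs: the inertial local Langlands correspondence of Henniart (as in the appendix to \cite{bm}), together with local-global compatibility for the Galois representations attached to automorphic forms on our definite quaternion algebra, and Lemma \ref{432} relating the support of $(\sigma,\psi)$ to the support of its mod $p$ reduction. For the forward direction, suppose $\rhobar$ is modular of weight $\sigma$, where $\sigma$ is a $\GL_2(\Fp)$-subquotient of $\sigma(\tau)\otimes_\bigO\F$. First I would fix $D$, $S$, $U$ (with $U_\p=\GL_2(\bigO_{F_\p})$), $\psi$, and $\mathfrak{m}$ witnessing this, so that $S_{\sigma,\psi}(U,\F)_\mathfrak{m}\neq 0$ and $\rhobar_\mathfrak{m}\cong\rhobar$. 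I would choose the central character $\psi$ on the characteristic-zero side compatibly (lifting $\overline{\psi}$, and matching the central character of $\sigma(\tau)$ on $\bigO_{F_\p}^\times$, which is possible because the relevant character of $\F_p^\times$ has a Teichm\"uller lift); one needs to check the compatibility condition $\sigma(\tau)|_{U_v\cap\bigO_{F_v}^\times}=\psi^{-1}|_{U_v\cap\bigO_{F_v}^\times}$ for all finite $v$, which holds by construction away from $p$ and at $p$ by the choice just made. Then, since $\sigma$ is a subquotient of $\sigma(\tau)\otimes_\bigO\F$, the first part of Lemma \ref{432} shows that $\mathfrak{m}$ is in the support of $(\sigma(\tau),\psi)$, i.e. $S_{\sigma(\tau),\psi}(U,\bigO)_\mathfrak{m}\neq 0$. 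After extending scalars to $\Qpbar$ we obtain a nonzero Hecke eigenspace in a space of automorphic forms; the corresponding automorphic representation $\pi$ has $\pi_\p$ containing $\sigma(\tau)\otimes\Qpbar$ as a $\GL_2(\bigO_{F_\p})$-type, and by Henniart's inertial local Langlands this forces $\mathrm{rec}(\pi_\p)|_{I_{F_\p}}\cong\tau$. Since $U_\p=\GL_2(\bigO_{F_\p})$ is hyperspecial maximal, $\pi_\p$ is a subquotient of a principal series and $\pi$ contributes (via Jacquet--Langlands and the theory of \cite{kis04}) a Galois representation $\rho_\pi:G_F\to\GL_2(\Qpbar)$ lifting $\rhobar$ which is modular; the local condition on $\pi_\p$ together with local-global compatibility and the fact that the weight $\sigma$ corresponds to Hodge--Tate weights $\{0,1\}$ means precisely that $\rho_\pi|_{G_{F_\p}}$ is parallel potentially Barsotti-Tate of type $\tau$, with determinant of the required form (cyclotomic times a finite order character of prime-to-$p$ order, after a twist, which we can arrange by the choice of $\psi$). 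The case of weight $\sigma_{m,p-1}$ is identical, except that one uses the special representation $\mathrm{sp}_\chi$ (with $\chi=\tilde{\omega}^m$) in place of a principal series, so that $\rho_\pi|_{G_{F_\p}}$ becomes parallel potentially \emph{semistable} of type $\tilde\omega^m\oplus\tilde\omega^m$ rather than Barsotti-Tate.

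For the converse, suppose $\rhobar$ lifts to a modular Galois representation $\rho$ which is parallel potentially Barsotti-Tate of type $\tau$ at $\p$. Modularity of $\rho$ means $\rho\cong\rho_\pi$ for some automorphic $\pi$ on $D^\times$ (of suitable weight/level); the potentially Barsotti-Tate-of-type-$\tau$ condition at $\p$, via local-global compatibility, translates into the statement that $\mathrm{rec}(\pi_\p)|_{I_{F_\p}}\cong\tau$ with Hodge--Tate weights $\{0,1\}$, hence by the inertial local Langlands correspondence $\pi_\p$ contains the $\GL_2(\bigO_{F_\p})$-type $\sigma(\tau)\otimes\Qpbar$. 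Unwinding the definition of $S_{\sigma,\psi}$ and matching up central characters, this says $\mathfrak{m}$ (the maximal ideal of $\mathbb{T}^{\univ}_{S,\bigO}$ cut out by $\rho$) lies in the support of $(\sigma(\tau),\psi)$ for an appropriate $\psi$ lifting some $\overline{\psi}$. Then the second part of Lemma \ref{432} produces an irreducible $\GL_2(\bigO_{F_\p})$-subquotient $W_{\sigma'}$ of $\sigma(\tau)\otimes_\bigO\F$ with $\mathfrak{m}$ in the support of $(\sigma',\overline{\psi})$; by Lemma \ref{441} this $\sigma'$ is one of the $\sigma_{m,n}$ appearing as a Jordan--H\"older factor of $\sigma(\tau)\otimes\F$, and $S_{\sigma',\overline\psi}(U,\F)_\mathfrak{m}\neq 0$ together with $\rhobar_\mathfrak{m}\cong\rhobar$ (which follows because $\rho_\mathfrak{m}$ is cut out by the same Hecke eigensystem as $\rho$, whose reduction is $\rhobar$) says exactly that $\rhobar$ is modular of weight $\sigma'$, as required.

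The main obstacle, and the only point requiring real care rather than bookkeeping, is the precise matching of \emph{central characters and determinant twists} across the correspondence: Lemma \ref{432} requires $\sigma(\tau)|_{U_v\cap\bigO_{F_v}^\times}=\psi^{-1}|_{U_v\cap\bigO_{F_v}^\times}$ for all finite $v$ \emph{with a single $\psi$}, while the Galois side wants $\det\rho=\epsilon\psi'$ for a finite-order $\psi'$ of prime-to-$p$ order; reconciling these, and checking that the (prime-to-$p$) finite-order part can genuinely be achieved rather than merely a finite-order character of possibly $p$-power order, is where one has to invoke the specific normalisation of class field theory (geometric Frobenius $\mapsto$ uniformiser) fixed in section \ref{sec: notation} and the fact that on $\F_p^\times$ every character is Teichm\"uller, hence of prime-to-$p$ order. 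Everything else—the identification of types under Henniart, local-global compatibility, and the translation between "potentially Barsotti-Tate of type $\tau$" and "$\pi_\p$ has type $\sigma(\tau)$"—is, as the paper says, completely standard and can be cited from \cite{bm}, \cite{cdt}, \cite{kis04} and \cite{gee061}.
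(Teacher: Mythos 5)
Your overall route is the same as the paper's: Lemma \ref{432}, the Jacquet--Langlands correspondence and the discussion of Hecke eigensystems in \cite{kis04}, then Kisin's local-global compatibility at $p$ together with Henniart's inertial local Langlands correspondence from the appendix to \cite{bm}. However, there is a genuine gap at the point where you pass between ``$\rho_\pi|_{G_{F_\p}}$ has inertial type $\tau$'' and ``$\pi_\p$ contains the type $\sigma(\tau)$'': you never address the possibility that $\pi_\p$ is a twist of the Steinberg representation. For the scalar type $\tau=\tau_\chi=\chi\oplus\chi$ this is exactly the delicate case: a twist of Steinberg has the same inertial Weil--Deligne restriction $\chi\oplus\chi$, but the associated local Galois representation is potentially semistable and \emph{not} potentially crystalline. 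Consequently your forward direction, as written, only yields a parallel potentially semistable lift rather than a potentially Barsotti-Tate one, and in your converse direction the inference ``$\mathrm{rec}(\pi_\p)|_{I_{F_\p}}\cong\tau$, hence $\pi_\p$ contains $\sigma(\tau)\otimes\Qpbar$'' fails for Steinberg twists, which contain $\operatorname{sp}_\chi$ but not $\chi\circ\det$ as a $\GL_2(\bigO_{F_\p})$-type. What is needed, and what the paper extracts from section A2 of Henniart's appendix to \cite{bm}, is that for the types under consideration a twist of Steinberg never contains $\sigma(\tau)^*$ (forward direction), and, for the converse, that potential crystallinity forces the monodromy operator $N$ to vanish, so that $\pi_\p$ is not a twist of Steinberg; the phrase ``Hodge--Tate weights $\{0,1\}$'' does not see $N$. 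The same point underlies the weight $\sigma_{m,p-1}$ case, where the paper uses Henniart A2 to identify $\pi_\p$ as a twist of Steinberg; your one-sentence treatment takes this for granted.

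A secondary slip: your claim that, since $U_\p=\GL_2(\bigO_{F_\p})$ is hyperspecial maximal, $\pi_\p$ is a subquotient of a principal series is false when $\tau=\tau_{\chi,\chi^p}$, since there $\sigma(\tau)$ is a lattice in the cuspidal representation $\Theta(\chi)$ and $\pi_\p$ is a depth-zero supercuspidal. This is not load-bearing (the existence of $\rho_\pi$ does not require $\pi_\p$ to be a principal series), but it should be removed. The central-character bookkeeping that you single out as the main difficulty is handled in the paper by the discussion in section 3.1.14 of \cite{kis04} and is indeed routine.
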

\begin{proof}
  By Lemma \ref{432}, the Jacquet-Langlands
  correspondence, and the discussion in section 3.1.14 of \cite{kis04}, $\rhobar$ is modular of weight
  $\sigma$ for some $\GL_{2}(\Fp)$-module subquotient $\sigma$
  of $\sigma(\tau)\otimes_{\bigO} \F$ if and only if there is an
  automorphic representation $\pi$ of $\GL_2(\A_F)$ corresponding to a
  Hilbert modular form of parallel weight 2, such that
  \begin{itemize}
  \item the Galois representation $\rho_\pi:G_F\to\GL_2(\Qpbar)$
    associated to $\pi$ lifts
    $\rhobar$, and
  \item $\pi_\p$ contains $\sigma(\tau)^*$ as a $\GL_2(\bigO_\p)$-representation.
  \end{itemize}
Since $\pi$ corresponds to a Hilbert modular form of parallel weight
2, $\rho_\pi|_{G_{F_\p}}$ is potentially semistable with Hodge-Tate
weights $0$ and $1$, and is potentially Barsotti-Tate provided that
$\pi_\p$ is not a twist of the Steinberg representation. By the compatibility of the local
and global Langlands correspondences at places dividing $p$ (see
\cite{kis06}), and the results of sections A2 and A3 of Henniart's
appendix to \cite{bm}, it follows that the conditions above on $\pi$
are equivalent to
\begin{itemize}
 \item the Galois representation $\rho_\pi:G_F\to\GL_2(\Qpbar)$
   associated to $\pi$ lifts
    $\rhobar$, and
  \item $\rho_\pi|_{G_{F_\p}}$ has type $\tau$.
\end{itemize}
Furthermore, in each of these cases it follows from the discussion in
section A2 of Henniart's appendix to \cite{bm} that $\pi_\p$ is not a twist of the Steinberg
representation.

This proves everything apart from the assertion that if $\rhobar$ is modular of weight
  $\sigma_{m,p-1}$, then $\rhobar$ lifts to a modular Galois
  representation which is parallel potentially semistable of type
  $\tilde{\omega}^m\oplus\tilde{\omega}^m$. This may be proved by a
  very similar argument; in this case, the argument above provides an automorphic representation $\pi$ of $\GL_2(\A_F)$ corresponding to a
  Hilbert modular form of parallel weight 2, such that \begin{itemize}
 \item the Galois representation $\rho_\pi:G_F\to\GL_2(\Qpbar)$
   associated to $\pi$ lifts
    $\rhobar$, and
 \item $\pi_\p$ contains $\operatorname{sp}_{\tilde{\omega}^m\circ\det}^*$ as a $\GL_2(\bigO_\p)$-representation.
\end{itemize}
It then follows from section A2  of Henniart's appendix to \cite{bm} that $\pi_\p$ is a twist of the Steinberg
representation, from which the result follows easily.
\end{proof}

\section{Weight cycling}\label{sec: weight cycling}We now explain the weight cycling argument due
to Kevin Buzzard which proves modularity in an additional weight in
the non-ordinary case. There is an exposition of this argument in
section 5 of \cite{taymero} in the case that $p$ splits completely in
$F$, and the argument goes over essentially unchanged in our
setting. Since our notation and assumptions differ from those of
\cite{taymero}, we sketch a proof here.

Fix a character
$\psi:F^\times\backslash(\A_F^f)^\times\to\bigO^\times$. If $A$ is an
$\bigO$-module we will regard $\psi$ as an $A$-valued character via the
structure homomorphism. Suppose that $A$ is a field and that $\sigma^\vee$ denotes the dual of
$\sigma$. Then (\emph{cf.} the discussion on page 742 of
\cite{taymero}, recalling that by assumption we have
$(U(\A_F^f)^\times \cap t^{-1}D^\times t)/F^\times=1$ for all
$t\in(D\otimes_F\A_F^f)^\times$) there is a perfect pairing
$$\langle\cdot,\cdot\rangle:S_{\sigma,\psi}(U,A)\times
S_{\sigma^\vee,\psi^{-1}}(U,A)\to A$$ given by $$\langle
  f_1,f_2\rangle=\sum_i \langle f_1(t_i),f_2(t_i)\rangle$$
  where $$(D\otimes_F\A_F^f)^\times=\coprod_i D^\times
  t_iU(\A_F^f)^\times$$ and the pairing between $f_1(t_i)$ and $f_2(t_i)$
  is the usual pairing between a representation and its dual. A
  standard calculation shows that under this pairing the adjoint of
  $S_x$ is $S_x^{-1}$, the adjoint of $T_x$ is $T_xS_x^{-1}$, and when
  it is defined the adjoint of $U_{\pi_\p}$ is $V_{\pi_\p}S_\p^{-1}$.

Let $$U_0=\prod_{v\nmid p}U_v\times I_\p$$ where $I_\p$ is the
Iwahori subgroup of $\GL_2(\bigO_{F_\p})$ consisting of matrices
which are upper-triangular modulo $\p$, and let $$U_1=\prod_{v\nmid
p}U_v\times I^1_\p$$ where $I^1_\p$ is the subgroup of $I_\p$ whose
entries are congruent to                $\bigl(
\begin{smallmatrix}
    *&*\\0&1
\end{smallmatrix}
\bigr)$modulo $\p$. Let $\sigma=1$, the trivial representation, so
that the operators $U_{\pi_\p}$ and $V_{\pi_\p}$ are defined on
$S_{1,\psi}(U_1,A)$ and $S_{1,\psi^{-1}}(U_1,A)$ for any
$\bigO$-algebra $A$. Let $$\delta^{n}   :\bigl(
    \begin{smallmatrix}
        a&b\\0&d
    \end{smallmatrix}
    \bigr)\mapsto d^{n},$$    an $\F^\times$-character of the
        standard Borel subgroup $B(\Fp)$ of $\GL_{2}(\Fp)$. Then there
        is a natural embedding $$S_{\delta^n,\psi}(U_0,\F)\into
        S_{1,\psi}(U_1,\F)$$which is equivariant for the actions of
        $\T_{S,\F}^{\univ}$, and the image of
        $S_{\delta^n,\psi}(U_0,\F)$ is stable under the actions of
        $U_{\pi_\p}$ and $V_{\pi_\p}$. We use this action as the
        definition of $U_{\pi_\p}$ and $V_{\pi_\p}$ on $S_{\delta^n,\psi}(U_0,\F)$.

There is also a natural isomorphism $$S_{\delta^n,\psi}(U_0,\F)\cong
S_{\Ind(\delta^n),\psi}(U,\F)$$ where $\Ind(\delta^n)$ is obtained as
the induction from $B(\F_p)$ to $\GL_2(\F_p)$ of $\delta^n$. We use
this isomorphism to define actions of $U_{\pi_\p}$ and $V_{\pi_\p}$ on $S_{\Ind(\delta^n),\psi}(U,\F)$.
We can
think of this induction as being the functions $$\theta:\GL_2(\Fp)\to\F$$
with the property that for all $b\in B(\Fp)$,
$g\in\GL_2(\Fp)$, $$\theta(bg)=\delta^n(b)\theta(g).$$ The action of
$\GL_2(\Fp)$ is by $$(g\theta)(x)=\theta(xg).$$ This
isomorphism identifies $f\in S_{\delta^n,\psi}(U_0,\F)$ with $F\in
S_{\Ind(\delta^n),\psi}(U,\F)$ where $$f(x)=F(x)(1)$$
and $$F(x)(g)=f(xg^{-1}).$$

Now, we have a short exact
sequence $$0\to\sigma_{0,n}\to\Ind(\delta^n)\to\sigma_{n,p-1-n}\to
0 $$of $\GL_2(\Fp)$-modules and thus a short exact sequence $$0\to
S_{\sigma_{0,n},\psi}(U,\F)\stackrel{\alpha}{\to} S_{\Ind(\delta^n),\psi}(U,\F)\stackrel{\beta}{\to}
S_{\sigma_{n,p-1-n},\psi}(U,\F)\to 0$$and, localising at
$\mathfrak{m}$, a short exact sequence $$0\to
S_{\sigma_{0,n},\psi}(U,\F)_\mathfrak{m}\stackrel{\alpha}{\to} S_{\Ind(\delta^n),\psi}(U,\F)_\mathfrak{m}\stackrel{\beta}{\to}
S_{\sigma_{n,p-1-n},\psi}(U,\F)_\mathfrak{m}\to 0.$$

\begin{prop}
  \label{prop:weightcycling}
 If $n<p-1$ and $S_{\sigma_{0,n},\psi}(U,\F)_\mathfrak{m}=0$ then the map $V_{\pi_\p}:
 S_{\Ind(\delta^n),\psi}(U,\F)_\mathfrak{m}\to
 S_{\Ind(\delta^n),\psi}(U,\F)_\mathfrak{m}$ is an isomorphism.
\end{prop}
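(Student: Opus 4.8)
The idea is to exploit the short exact sequence relating $S_{\sigma_{0,n},\psi}$, $S_{\Ind(\delta^n),\psi}$, and $S_{\sigma_{n,p-1-n},\psi}$, the hypothesis that the first of these vanishes after localisation, and the standard relations satisfied by $U_{\pi_\p}$ and $V_{\pi_\p}$. Under the hypothesis $S_{\sigma_{0,n},\psi}(U,\F)_\m = 0$, the map $\beta$ gives an isomorphism $S_{\Ind(\delta^n),\psi}(U,\F)_\m \isoto S_{\sigma_{n,p-1-n},\psi}(U,\F)_\m$, and since $n < p-1$ the target is the space attached to the irreducible weight $\sigma_{n,p-1-n}$; in particular all these localised spaces are finite-dimensional $\F$-vector spaces. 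So it suffices to prove that $V_{\pi_\p}$ acting on one of them — equivalently on all of them, compatibly — is \emph{injective}, since injectivity of a linear endomorphism of a finite-dimensional space forces bijectivity.

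First I would recall the basic relation between $U_{\pi_\p}$ and $V_{\pi_\p}$: on $S_{1,\psi}(U_1,\F)$ one has $U_{\pi_\p}V_{\pi_\p} = p^{?}\cdot(\text{something})$ — in characteristic $p$ the precise statement is that $U_{\pi_\p} V_{\pi_\p}$ and $V_{\pi_\p} U_{\pi_\p}$ act as explicit scalars (involving $S_\p$) coming from the double-coset identity $\bigl(\begin{smallmatrix}\pi_\p&0\\0&1\end{smallmatrix}\bigr)\bigl(\begin{smallmatrix}1&0\\0&\pi_\p\end{smallmatrix}\bigr) = \bigl(\begin{smallmatrix}\pi_\p&0\\0&\pi_\p\end{smallmatrix}\bigr)$, together with the fact that the relevant index $[I_\p:I_\p \cap \bigl(\begin{smallmatrix}\pi_\p&0\\0&1\end{smallmatrix}\bigr)I_\p\bigl(\begin{smallmatrix}\pi_\p&0\\0&1\end{smallmatrix}\bigr)^{-1}]$ is divisible by $p$ when one descends to the full level-$U_0$ space — this is exactly where the residue field being $\F_p$ and the degeneracy of the Hecke operators mod $p$ enter. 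The upshot is an identity on $S_{\Ind(\delta^n),\psi}(U,\F)_\m$ that lets one trade $U_{\pi_\p}$-information for $V_{\pi_\p}$-information, or which shows directly that $V_{\pi_\p}$ has no kernel once a certain subspace (the one killed by the complementary operator) is known to vanish. Concretely, I would argue: if $f \in S_{\Ind(\delta^n),\psi}(U,\F)_\m$ lies in $\ker V_{\pi_\p}$, then pushing down via $\beta$ and using the pairing description of the adjoint of $V_{\pi_\p}$ (the dual statement to ``the adjoint of $U_{\pi_\p}$ is $V_{\pi_\p}S_\p^{-1}$'' from the excerpt), one identifies $\ker V_{\pi_\p}$ on the localised space with the image under $\alpha$ of something living in $S_{\sigma_{0,n},\psi}(U,\F)_\m$ — or more precisely one shows $\ker(V_{\pi_\p}) \subseteq \im(\alpha) = S_{\sigma_{0,n},\psi}(U,\F)_\m$, which is zero by hypothesis.

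To organise this cleanly: localising the short exact sequence at $\m$, one checks that $\alpha$ is $V_{\pi_\p}$-equivariant and that $V_{\pi_\p}$ acts as zero (or nilpotently) on $S_{\sigma_{0,n},\psi}(U,\F)_\m$ — this is a direct level-$U_0$ computation, using that $\sigma_{0,n}$ is the \emph{sub} of $\Ind(\delta^n)$, so the ``lowering'' operator $V_{\pi_\p}$ kills it — while on the quotient $S_{\sigma_{n,p-1-n},\psi}(U,\F)_\m$ the operator $V_{\pi_\p}$ is an isomorphism, by a symmetric computation or by the $U_{\pi_\p}$–$V_{\pi_\p}$ relation combined with $S_{\sigma_{0,n},\psi}(U,\F)_\m = 0$ forcing $U_{\pi_\p}$ to be invertible there (a mod-$p$ multiplicity-one / Hecke-eigenvalue argument: on an irreducible-weight eigenspace with non-vanishing localisation, the $U_{\pi_\p}$-eigenvalue must be a unit, as otherwise the companion weight $\sigma_{0,n}$ would contribute). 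Then the snake lemma, or a direct diagram chase, on
\begin{equation*}
\begin{CD}
0 @>>> S_{\sigma_{0,n},\psi}(U,\F)_\m @>\alpha>> S_{\Ind(\delta^n),\psi}(U,\F)_\m @>\beta>> S_{\sigma_{n,p-1-n},\psi}(U,\F)_\m @>>> 0\\
@. @VV{V_{\pi_\p}}V @VV{V_{\pi_\p}}V @VV{V_{\pi_\p}}V\\
0 @>>> S_{\sigma_{0,n},\psi}(U,\F)_\m @>\alpha>> S_{\Ind(\delta^n),\psi}(U,\F)_\m @>\beta>> S_{\sigma_{n,p-1-n},\psi}(U,\F)_\m @>>> 0
\end{CD}
\end{equation*}
gives: the left vertical arrow is zero (its source vanishes), the right vertical arrow is an isomorphism, hence the middle arrow is an isomorphism too (in the derived/snake sense its kernel injects into $\ker$ of the right map, which is $0$, and its cokernel surjects from $\coker$ of the right map, which is $0$, given $\mathrm{source}=0$ on the left).

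**Main obstacle.** The delicate point is not the homological bookkeeping but establishing the two ``ingredient'' facts about $V_{\pi_\p}$ on the sub and the quotient: namely that $V_{\pi_\p}$ kills $S_{\sigma_{0,n},\psi}(U,\F)_\m$ and is an isomorphism on $S_{\sigma_{n,p-1-n},\psi}(U,\F)_\m$. These rest on identifying the precise action of the degenerate mod-$p$ Hecke operators $U_{\pi_\p}, V_{\pi_\p}$ on the induced-from-Borel space — in particular the vanishing of certain index factors mod $p$, which is genuinely where the hypothesis $n < p-1$ (so that $\sigma_{n,p-1-n}$ is irreducible and the sequence really has two distinct constituents) and the residue-field-$\F_p$ structure are used. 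I would extract this from the explicit description of $\Ind(\delta^n)$ as functions on $\GL_2(\F_p)$ transforming under $B(\F_p)$ given in the excerpt, computing the double-coset action of $\bigl(\begin{smallmatrix}1&0\\0&\pi_\p\end{smallmatrix}\bigr)$ directly and matching it against the constituents $\sigma_{0,n}$ and $\sigma_{n,p-1-n}$; this mirrors the analogous computation in section 5 of \cite{taymero}, which goes through unchanged since it depends only on the residue field being $\F_p$.
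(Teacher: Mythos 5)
Your overall skeleton (localise, note $\beta$ becomes an isomorphism, reduce to injectivity of $V_{\pi_\p}$ by finite-dimensionality, and push the real content into a computation with the degenerate Hecke operators as in \cite{taymero}) is close in spirit to the paper, but the specific way you propose to close the argument has a genuine gap. The paper's proof consists of producing a $\T^{\univ}_{S,\F}$-equivariant \emph{injection} $\kappa\colon S_{\sigma_{n,p-1-n},\psi}(U,\F)\to S_{\Ind(\delta^n),\psi}(U,\F)$ with $\kappa\circ\beta=V_{\pi_\p}$ (the construction being the content deferred to \cite{taymero}); then $V_{\pi_\p}=\kappa\circ\beta$ is injective after localisation because $\beta_\m$ is an isomorphism under the hypothesis and $\kappa_\m$ is injective. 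Your diagram-chase version needs two inputs: that $V_{\pi_\p}$ kills the sub $\alpha(S_{\sigma_{0,n},\psi}(U,\F))$ (this is half of the factorisation statement and is fine to defer to the same computation), and that the induced endomorphism of $S_{\sigma_{n,p-1-n},\psi}(U,\F)_\m$ is an isomorphism. It is this second input that you do not actually establish, and it is precisely what the existence of the injective $\kappa$ (plus the hypothesis) delivers in the paper; it is not a statement with an independent "symmetric computation" proof, since without the vanishing hypothesis there is no reason for $\beta\circ\kappa$ to be injective.

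Moreover, the concrete mechanism you offer for it is incorrect. The composite $U_{\pi_\p}V_{\pi_\p}$ (and $V_{\pi_\p}U_{\pi_\p}$) at Iwahori level is $p$ times a central operator, exactly because of the index divisible by $p$ that you yourself invoke; so in characteristic $p$ this relation reads $U_{\pi_\p}V_{\pi_\p}=0$, and from invertibility of $U_{\pi_\p}$ it would force $V_{\pi_\p}=0$, the opposite of what you want. The auxiliary claim that the hypothesis $S_{\sigma_{0,n},\psi}(U,\F)_\m=0$ forces $U_{\pi_\p}$ to be invertible ("otherwise the companion weight would contribute") is both unjustified and circular: it is essentially the weight-cycling statement being proved (or its dual), and in the intended application (Proposition \ref{prop: non-ordinary global  lift gives both weights via cycling}) $U_{\pi_\p}$ has a non-unit eigenvalue on the relevant localised space even when the hypothesis holds, so any general argument for its invertibility would prove too much. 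To repair the proof you should replace these steps by the actual construction of $\kappa$ (equivalently, the precise double-coset computation showing $V_{\pi_\p}$ factors through $\beta$ via an injective, Hecke-equivariant map), which is what the reference to \cite{taymero} supplies.
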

\begin{proof}
  Under the assumption that
  $S_{\sigma_{0,n},\psi}(U,\F)_\mathfrak{m}=0$, we have an
  isomorphism $$S_{\Ind(\delta^n),\psi}(U,\F)_\mathfrak{m}\stackrel{\beta}{\to}
  S_{\sigma_{n,p-1-n},\psi}(U,\F)_\mathfrak{m}.$$ We claim that
  there is an injection $$\kappa:
  S_{\sigma_{n,p-1-n},\psi}(U,\F)_\mathfrak{m}\to
  S_{\Ind(\delta^n),\psi}(U,\F)_\mathfrak{m}$$ such that
  $\kappa\circ\beta=V_{\pi_\p}$. This would clearly establish the
  result. Of course, it is enough to construct an injection $$\kappa:
  S_{\sigma_{n,p-1-n},\psi}(U,\F)\to S_{\Ind(\delta^n),\psi}(U,\F)$$
  which commutes with the action of $\mathbb{T}_{S,\F}^{\univ}$, and
  satisfies $\kappa\circ\beta=V_{\pi_\p}$.


The verification that this
is possible is exactly as in \cite{taymero}.   \end{proof}

\begin{prop}
  \label{prop:weightcycling in weight 0}
 If  $S_{\sigma_{0,p-1},\psi}(U,\F)_\mathfrak{m}=0$ then the map $U_{\pi_\p}:
 S_{\sigma_{0,0},\psi}(U,\F)_\mathfrak{m}\to
 S_{\sigma_{0,0},\psi}(U,\F)_\mathfrak{m}$ is an isomorphism.
\end{prop}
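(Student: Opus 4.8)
The plan is to reduce to, and then prove, the $U_{\pi_\p}$-analogue of Proposition~\ref{prop:weightcycling}.

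First, since $p+1$ is a unit in $\F$, the functions of average value zero form a $\GL_2(\Fp)$-stable complement to the constants inside $\Ind(1)=\Ind(\delta^0)$, so that as $\F[\GL_2(\Fp)]$-modules $\Ind(1)\cong\sigma_{0,0}\oplus\sigma_{0,p-1}$, the sum-zero part being the Steinberg representation $\sigma_{0,p-1}$ (\emph{cf.}\ Lemma~\ref{441}(2)). Hence, by exactness of $W\mapsto S_{W,\psi}(U,\F)$ and localisation at $\mathfrak{m}$, $$S_{\Ind(1),\psi}(U,\F)_\mathfrak{m}\cong S_{\sigma_{0,0},\psi}(U,\F)_\mathfrak{m}\oplus S_{\sigma_{0,p-1},\psi}(U,\F)_\mathfrak{m},$$ and the hypothesis kills the second summand. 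Modulo $p$ and in weight $2$ the spherical operator $U_{\pi_\p}$ of the proposition, transported along the inclusion $S_{\sigma_{0,0},\psi}(U,\F)\into S_{\Ind(1),\psi}(U,\F)$ as the constant-function summand, agrees with the Iwahori-level operator $U_{\pi_\p}$ of Section~\ref{sec: weight cycling} (the usual comparison, whose correction term carries a factor of $p$); so it suffices to show that this Iwahori-level $U_{\pi_\p}$ acts invertibly on $S_{\Ind(1),\psi}(U,\F)_\mathfrak{m}$.

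For this one runs the argument of \cite{taymero} exactly as in the proof of Proposition~\ref{prop:weightcycling}, but with the roles of $U_{\pi_\p}$ and $V_{\pi_\p}$ interchanged and the short exact sequence $0\to\sigma_{0,p-1}\to\Ind(1)\to\sigma_{0,0}\to 0$ (legitimate because it splits) in place of $0\to\sigma_{0,n}\to\Ind(\delta^n)\to\sigma_{n,p-1-n}\to 0$. Concretely, one constructs a $\mathbb{T}_{S,\F}^{\univ}$-equivariant injection $\kappa:S_{\sigma_{0,0},\psi}(U,\F)\into S_{\Ind(1),\psi}(U,\F)$ with $\kappa\circ\beta=U_{\pi_\p}$, where $\beta:S_{\Ind(1),\psi}(U,\F)\onto S_{\sigma_{0,0},\psi}(U,\F)$ is the Hecke-equivariant surjection induced by $\Ind(1)\onto\sigma_{0,0}$. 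Granting this, localise at $\mathfrak{m}$: by the first paragraph $\beta_\mathfrak{m}$ is an isomorphism and $\dim_\F S_{\sigma_{0,0},\psi}(U,\F)_\mathfrak{m}=\dim_\F S_{\Ind(1),\psi}(U,\F)_\mathfrak{m}$, so $\kappa_\mathfrak{m}$ is an injection between spaces of equal finite dimension, hence an isomorphism; therefore $U_{\pi_\p}=\kappa_\mathfrak{m}\circ\beta_\mathfrak{m}$ is an isomorphism, as required.

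The substantive step, and the one I expect to be the only real difficulty, is the construction of $\kappa$ together with the identity $\kappa\circ\beta=U_{\pi_\p}$. Morally this records that $U_{\pi_\p}$ annihilates the ``Steinberg part'' of $S_{\Ind(1),\psi}(U,\F)$ and, modulo that, recovers the identity — equivalently, that on the oldspace attached to a spherical eigenform of weight $\sigma_{0,0}$ with $U_{\pi_\p}$-eigenvalue $0$ the operator $U_{\pi_\p}$ is nilpotent, so that such an eigenform forces a nonzero contribution in weight $\sigma_{0,p-1}$. This verification is purely local at $\p$ and goes through just as the corresponding computation in \cite{taymero}; the one thing to check beyond the case $n<p-1$ treated there is that the semisimplicity of $\Ind(1)$ causes no trouble, and if anything it makes the bookkeeping easier.
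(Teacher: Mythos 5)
Your first paragraph's splitting $\Ind(1)\cong\sigma_{0,0}\oplus\sigma_{0,p-1}$ is correct (and is implicitly what the paper uses), but the two Hecke-theoretic claims that carry your argument fail. With the paper's unnormalised double coset operators there is no factor of $p$ in the comparison between the spherical and Iwahori-level operators on the constant-function copy: writing $\iota_1$ for the inclusion $S_{\sigma_{0,0},\psi}(U,\F)\into S_{\Ind(1),\psi}(U,\F)\cong S_{1,\psi}(U_0,\F)$ and $\iota_2$ for the second degeneracy map $h\mapsto h(\cdot\,\diag(1,\pi_\p))$, the coset decompositions give, already integrally,
\begin{equation*}
U^{\mathrm{Iw}}_{\pi_\p}(\iota_1 h)\;=\;\iota_1\bigl(U^{\mathrm{sph}}_{\pi_\p}h\bigr)\;-\;\iota_2(h),
\end{equation*}
so the correction term is $\iota_2(h)$, with no $p$; the factor of $p$ appears instead in the composites $U^{\mathrm{Iw}}_{\pi_\p}\circ\iota_2$ and $V^{\mathrm{Iw}}_{\pi_\p}\circ\iota_1$. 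You are importing the classical weight-$k$ normalisation $U_p=T_p-p^{k-1}\langle p\rangle V_p$, which is not available here. Worse, since mod $p$ one has $\beta\circ\iota_1=\mathrm{id}$ and $\beta\circ\iota_2=U^{\mathrm{sph}}_{\pi_\p}$, applying $\beta$ to the displayed relation gives $\beta\circ U^{\mathrm{Iw}}_{\pi_\p}\circ\iota_1=0$; under your hypothesis $\iota_1$ and $\beta$ localise to isomorphisms, so the Iwahori-level $U_{\pi_\p}$ is \emph{zero}, not invertible, on $S_{\Ind(1),\psi}(U,\F)_\m$. Hence the reduction in your first paragraph cannot yield the proposition.

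The second paragraph is also not a legitimate mirror of Proposition \ref{prop:weightcycling}: the identity $\kappa\circ\beta=U^{\mathrm{Iw}}_{\pi_\p}$ on $S_{\Ind(1),\psi}(U,\F)$ cannot hold for any $\kappa$, since it would force $U^{\mathrm{Iw}}_{\pi_\p}$ to annihilate $\ker\beta$ (the Steinberg summand), which it does not: for example $\iota_2 h-\iota_1(U^{\mathrm{sph}}_{\pi_\p}h)\in\ker\beta$, yet applying $U^{\mathrm{Iw}}_{\pi_\p}$ gives $\iota_2(U^{\mathrm{sph}}_{\pi_\p}h)-\iota_1((U^{\mathrm{sph}}_{\pi_\p})^2h)$, which is nonzero in general (conceptually, forms that are unramified twists of Steinberg at $\p$ lie in $\ker\beta$ but have unit $U_{\pi_\p}$-eigenvalue). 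The identity the paper actually uses, following \cite{taymero}, is of a different shape: one constructs a Hecke-equivariant injection $\kappa$ with $\beta\circ\kappa=U_{\pi_\p}$, where $U_{\pi_\p}$ is the \emph{spherical} operator on $S_{\sigma_{0,0},\psi}(U,\F)$; concretely $\kappa$ is essentially $\iota_2$, and $\beta\circ\iota_2=U^{\mathrm{sph}}_{\pi_\p}$ is a direct coset computation. Localising, $U_{\pi_\p}=\beta_\m\circ\kappa_\m$ is injective, hence an isomorphism, which is the proposition. So interchanging the roles of $U_{\pi_\p}$ and $V_{\pi_\p}$ and reversing the order of the factorisation is not a cosmetic change; your final dimension-count step would be fine if your operator identity held, but it does not.
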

\begin{proof}
  Under the assumption that
  $S_{\sigma_{0,p-1},\psi}(U,\F)_\mathfrak{m}=0$, we have an isomorphism $$S_{\Ind(1),\psi}(U,\F)_\mathfrak{m}\stackrel{\beta}{\to}
S_{\sigma_{0,0},\psi}(U,\F)_{\mathfrak{m}}.$$ We claim that
there is an injection $$\kappa:
S_{\sigma_{0,0},\psi}(U,\F)\to
S_{\Ind(1),\psi}(U,\F)$$ which commutes with the action of
$\mathbb{T}_{S,\F}^{\univ}$, and satisfies
$\beta\circ\kappa=U_{\pi_\p}$.


Again, the verification that this
is possible is exactly as in \cite{taymero}. \end{proof}


Now, there is a maximal ideal
$\m^*$ of $\mathbb{T}_{S,\bigO}^{\univ}$ with the property that for
each $x\notin S$, $T_x-\alpha\in\m$ if and only if $T_x
S_x^{-1}-\alpha\in\m^*$, and $S_x-\beta\in\m$ if and only if
$S_x^{-1}-\beta\in\m^*$. Thus
 $$\rhobar_{\mathfrak{m}^*}\cong\rhobar_{\mathfrak{m}}^\vee(1).$$
Then from the duality explained above between $S_{\sigma,\psi}$ and
$S_{\sigma^\vee,\psi^{-1}}$, we obtain

\begin{cor}
  \label{cor:weightcyclingdualversion}If $n<p-1$ and $S_{\sigma_{0,n},\psi}(U,\F)_\mathfrak{m}=0$ then the map $$U_{\pi_\p}:
 S_{\Ind(\delta^{-n}),\psi^{-1}}(U,\F)_{\m^*}\to
 S_{\Ind(\delta^{-n}),\psi^{-1}}(U,\F)_{\m^*}$$ is an isomorphism.

\end{cor}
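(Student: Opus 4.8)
The plan is to deduce this simply by dualising Proposition~\ref{prop:weightcycling}. That proposition, under exactly the hypotheses of the corollary, asserts that $V_{\pi_\p}$ is an automorphism of $S_{\Ind(\delta^n),\psi}(U,\F)_{\m}$, and the duality recalled above between the spaces $S_{\sigma,\psi}$ and $S_{\sigma^\vee,\psi^{-1}}$ should turn this into the desired statement about $U_{\pi_\p}$.

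First I would pin down how the perfect pairing $\langle\cdot,\cdot\rangle\colon S_{\sigma,\psi}(U,\F)\times S_{\sigma^\vee,\psi^{-1}}(U,\F)\to\F$ interacts with localisation at maximal ideals of $\T^{\univ}_{S,\F}$. Since it is perfect and satisfies $\langle Tf_1,f_2\rangle=\langle f_1,T^{*}f_2\rangle$ for every $T\in\T^{\univ}_{S,\F}$, where $T\mapsto T^{*}$ is the involution with $S_x\mapsto S_x^{-1}$ and $T_x\mapsto T_xS_x^{-1}$, and since this involution carries $\m$ to $\m^{*}$ by the characterisation of $\m^{*}$ recalled above, the pairing identifies $(S_{\sigma,\psi}(U,\F)_{\m})^{\vee}$ with $S_{\sigma^\vee,\psi^{-1}}(U,\F)_{\m^{*}}$; moreover, under this identification the dual of an operator $A$ on the first space is its adjoint $A^{*}$ on the second.

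Next I would specialise to $\sigma=\Ind(\delta^n)$. Because forming the contragredient commutes with induction for the finite group $\GL_2(\Fp)$ and $(\delta^n)^{-1}=\delta^{-n}$, one has $\sigma^{\vee}\cong\Ind(\delta^{-n})$, so the pairing identifies $(S_{\Ind(\delta^n),\psi}(U,\F)_{\m})^{\vee}$ with $S_{\Ind(\delta^{-n}),\psi^{-1}}(U,\F)_{\m^{*}}$, carrying $V_{\pi_\p}$ to its adjoint $V_{\pi_\p}^{*}$. Applying the involution $T\mapsto T^{*}$ to the relation $U_{\pi_\p}^{*}=V_{\pi_\p}S_\p^{-1}$, and using that it is an involution with $(S_\p)^{*}=S_\p^{-1}$, yields $V_{\pi_\p}^{*}=S_\p^{-1}U_{\pi_\p}$. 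By Proposition~\ref{prop:weightcycling}, $V_{\pi_\p}$ is an automorphism of $S_{\Ind(\delta^n),\psi}(U,\F)_{\m}$; hence its dual $S_\p^{-1}U_{\pi_\p}$ is an automorphism of $S_{\Ind(\delta^{-n}),\psi^{-1}}(U,\F)_{\m^{*}}$, and since $S_\p$ acts invertibly, so is $U_{\pi_\p}$, as claimed.

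The argument is purely formal given Proposition~\ref{prop:weightcycling} and the duality and adjoint computations already set up, so I do not expect any real obstacle; the only points needing care are bookkeeping ones---correctly identifying $\sigma^{\vee}$ with $\Ind(\delta^{-n})$ and $\m$-localisation with $\m^{*}$-localisation, getting the order right in the adjoint formula for $V_{\pi_\p}$, and noting that $U_{\pi_\p},V_{\pi_\p}$ act on the $\Ind(\delta^{\pm n})$-spaces through Hecke-equivariant isomorphisms with Iwahori-level spaces that are compatible with the pairings, so that the adjoint relation transfers to this setting.
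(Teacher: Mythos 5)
Your argument is correct and is essentially the paper's own proof: the paper deduces the corollary "at once" from Proposition \ref{prop:weightcycling} via the perfect pairing between $S_{\sigma,\psi}$ and $S_{\sigma^\vee,\psi^{-1}}$, the adjoint formulas for $T_x$, $S_x$, $U_{\pi_\p}$, $V_{\pi_\p}$, and the characterisation of $\m^*$, exactly as you do. Your bookkeeping (identifying $\Ind(\delta^n)^\vee\cong\Ind(\delta^{-n})$, matching $\m$- with $\m^*$-localisation, and absorbing the invertible $S_\p$) is the same formal dualisation the authors intend.
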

\begin{proof}
  This follows at once from Proposition \ref{prop:weightcycling}.
\end{proof}

%
%

\begin{prop}\label{prop: non-ordinary global  lift gives both weights via cycling}
Suppose that $0\leq n\leq p-1$. If $\rhobar$ has a lift to a modular representation
$\rho:G_{F}\to\GL_{2}(\Qpbar)$ which is parallel potentially Barsotti-Tate of
type $\widetilde{\omega}^{m+n}\oplus\widetilde{\omega}^{m}$ and is not
 ordinary, then $\rhobar$ is modular of weight
$\sigma_{m,n}$ and of weight $\sigma_{m+n,p-1-n}$.
\end{prop}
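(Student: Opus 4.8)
The plan is to deduce both weights from a combination of the inertial local Langlands dictionary (Lemma~\ref{lem:typesversusweights}), which tells us which Serre weights are visible, and the weight-cycling propositions of this section, which upgrade ``modular of one of two weights'' to ``modular of both''. First I would set $\tau = \tau_{\widetilde\omega^{m+n},\widetilde\omega^{m}}$ when $0<n<p-1$ (so that $\widetilde\omega^{m+n}\neq\widetilde\omega^m$ as characters of $\F_p^\times$), or $\tau=\tau_{\widetilde\omega^m}$ when $n=0$, or the semistable type $\widetilde\omega^m\oplus\widetilde\omega^m$ when $n=p-1$; the hypothesis says precisely that $\rho$ is a modular parallel potentially Barsotti-Tate (resp.\ semistable) lift of type $\tau$ at $\p$. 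Applying the converse direction of Lemma~\ref{lem:typesversusweights}, $\rhobar$ is modular of weight $\sigma$ for some $\GL_2(\F_p)$-subquotient of $\sigma(\tau)\otimes_{\bigO}\F$; by Lemma~\ref{441}(3) (in the principal series case) the two Jordan--H\"older constituents are exactly $\sigma_{m,n}$ and $\sigma_{m+n,p-1-n}$ (with $\{(m+n)-m\}=n$ and $\{m-(m+n)\}=p-1-n$ when $0<n<p-1$), while the degenerate cases $n=0$ and $n=p-1$ give a single weight $\sigma_{m,0}$ resp.\ $\sigma_{m,p-1}$. So in all cases $\rhobar$ is modular of at least one of $\sigma_{m,n}$, $\sigma_{m+n,p-1-n}$.

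Next I would run weight cycling to get the other weight. The key observation is that the two constituents above are precisely the pair appearing in the short exact sequence $0\to\sigma_{0,n}\to\Ind(\delta^n)\to\sigma_{n,p-1-n}\to 0$ after a suitable twist by $\det^m$ (and after replacing $\rhobar$ by $\rhobar$ or its dual-twist $\rhobar_{\m^*}\cong\rhobar^\vee(1)$, and $\psi$ by $\psi$ or $\psi^{-1}$, to match up with the Propositions). Suppose for contradiction that $\rhobar$ is modular of only one of the two weights; after the twist I may assume that it is modular of $\sigma_{n,p-1-n}$ but $S_{\sigma_{0,n},\psi}(U,\F)_\m = 0$. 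Then Proposition~\ref{prop:weightcycling} (or its dual form Corollary~\ref{cor:weightcyclingdualversion}) shows that $V_{\pi_\p}$ (resp.\ $U_{\pi_\p}$) acts invertibly on $S_{\Ind(\delta^n),\psi}(U,\F)_\m$. On the other hand, the hypothesis that $\rho$ is \emph{not} ordinary must be used to force the relevant $U_{\pi_\p}$- or $V_{\pi_\p}$-eigenvalue to be zero on the characteristic-zero space coming from $\rho$, hence (by a standard local-global compatibility at $\p$ for the Iwahori level, identifying the $U_{\pi_\p}$-eigenvalue with a Frobenius eigenvalue/crystalline Frobenius eigenvalue of $\rho|_{G_{F_\p}}$) nilpotent on the mod $p$ space; this contradicts invertibility. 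The case $n=0$, $n=p-1$ is handled by Proposition~\ref{prop:weightcycling in weight 0} in the same way. Running this argument on both sides (once for $\rhobar$ and once for $\rhobar_{\m^*}$) rules out modularity of only one weight, so $\rhobar$ is modular of both.

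The main obstacle is the middle step: translating ``$\rho$ is not ordinary'' into the vanishing of a $U_{\pi_\p}$- or $V_{\pi_\p}$-eigenvalue modulo $p$. This requires carefully unwinding the inertial local Langlands correspondence at Iwahori level --- identifying the space $S_{\Ind(\delta^n),\psi}(U,\F)_\m$, or rather its characteristic-zero lift attached to $\rho$, with a vector in $\pi_\p$ on which $U_{\pi_\p}$ acts by the scalar cutting out an unramified subrepresentation --- and observing that non-ordinarity of $\rho|_{G_{F_\p}}$ forces this scalar to be a non-unit in $\bigO$, hence $0$ in $\F$. I would expect to cite \cite{gee053}, \cite{taymero}, or the relevant local computations for this dictionary; the remaining bookkeeping (matching $m$, the choice of $\psi$ versus $\psi^{-1}$, and the twist by $\det^m$ relating $\sigma_{m,n}, \sigma_{m+n,p-1-n}$ to $\sigma_{0,n}, \sigma_{n,p-1-n}$) is routine.
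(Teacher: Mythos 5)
Your proposal is correct and follows essentially the same route as the paper: the converse direction of Lemma \ref{lem:typesversusweights} gives modularity in one of the two weights, and, assuming the other weight is absent, Propositions \ref{prop:weightcycling}, \ref{prop:weightcycling in weight 0} and Corollary \ref{cor:weightcyclingdualversion} force $U_{\pi_\p}$ (or $V_{\pi_\p}$) to act invertibly mod $p$, contradicting the non-unit $U_\p$-eigenvalue that local-global compatibility attaches to the non-ordinary lift $\rho$ (the step you flag as the main obstacle, which the paper handles exactly as you suggest, citing Proposition 5.3.1 of \cite{ger}). The only slip is harmless bookkeeping at $n=p-1$: the hypothesis there is still a potentially Barsotti-Tate lift of the scalar type $\widetilde{\omega}^{m}\oplus\widetilde{\omega}^{m}$ (not a semistable one), so the converse of Lemma \ref{lem:typesversusweights} yields $\sigma_{m,0}$ rather than $\sigma_{m,p-1}$, but either way one of the two weights is obtained and the weight-cycling contradiction proceeds unchanged.
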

\begin{proof}We know from Lemma \ref{lem:typesversusweights} that
  $\rhobar$ is modular of weight at least one of $\sigma_{m+n,p-1-n}$
  and $\sigma_{m,n}$. Suppose for the sake of contradiction (and
  without loss of generality) that
  $\rhobar$ is modular of weight $\sigma_{m+n,p-1-n}$ but not weight
  $\sigma_{m,n}$. Twisting, we may without loss of generality assume
  that $m=0$.

Take $D$, $S$, $U$, $\psi$ and $\m$ as in section \ref{sec: notation},
chosen so that if $n\neq p-1$ there is an eigenform in
$S_{\Ind(\tilde{\delta}^n),\psi}(U,\Qpbar)$  corresponding to $\rho$,
and if $n=p-1$ there is such a form in $S_{1,\psi}(U,\Qpbar)$. Note that
by local-global compatibility and standard properties of the local
Langlands correspondence, the assumption that $\rho$ is not ordinary
shows that $U_\p$ has a non-unit eigenvalue on
$S_{\Ind(\tilde{\delta}^n),\psi}(U,\Qpbar)$ (respectively
$S_{1,\psi}(U,\Qpbar)$). [One needs to check that if $U_\p$ has only
unit eigenvalues then $\rho$ is necessarily ordinary. To do this, one
uses local-global compatibility to show that the action of Frobenius
on the associated weakly admissible representation has a unit
eigenvalue; the corresponding eigenspace is then a
sub-weakly-admissible module, from which ordinarity follows. See
Proposition 5.3.1 of \cite{ger} for a much more general result.]

Suppose first that $n=p-1$. Then by Proposition
\ref{prop:weightcycling in weight 0}, $U_{\pi_\p}$ is an isomorphism on
$S_{1,\psi}(U,\F)$; but this is a contradiction.

Suppose now that $n<p-1$. Then by Corollary
\ref{cor:weightcyclingdualversion}, $U_{\pi_\p}$ is an isomorphism on
$S_{\Ind(\delta^{-n}),\psi^{-1}}(U,\F)_{\m^*}$. Again, this
is a contradiction, as by the above duality there is an eigenform in
$S_{\Ind(\tilde{\delta}^{-n}),\psi^{-1}}(U,\Qpbar)_{\m^*}$
corresponding to the non-ordinary representation $\rho^\vee(1)$.
\end{proof}

\section{Necessary conditions}\label{sec: necessity}

Suppose that $K$ is a finite extension of $\Qp$, with residue field
$k$.  Let $S_{k}=\{\tau:k\into\Fpbar\}$. For each $\tau\in S_{k}$ we
define the fundamental character $\omega_{K,\tau}$ corresponding to
$\tau$ to be the composite $$\xymatrix{I_{K}\ar[r]^{\sim} &
  \bigO_{K}^{\times}\ar[r] & k^{\times}\ar[r]^{\tau} &
  \Fpbar^{\times},}$$ where the first map is the isomorphism given by
local class field theory, normalised so that a uniformiser corresponds
to geometric Frobenius. We will generally suppress the subscript $K$
and write simply $\omega_{\tau}$.  If $\chi$ is a character of $G_{K}$ or
$I_{K}$, we denote its reduction mod $p$ by $\overline{\chi}$.

If $K$ is totally ramified (e.g. if $K=F_{\p}$), we let $\omega$ be
the unique fundamental character. Note that $\omega^{e(K/\Qp)}$ is the
(restriction to $I_{K}$ of the) mod $p$ cyclotomic character, e.g. by Lemma
  \ref{lem:existence of crystalline chars} below.  Let $\sigma_{1}$,
  $\sigma_{2}$ denote the two embeddings of the quadratic extension of
  $k$ into $\Fpbar$, and let $\omega_{\sigma_{1}}$,
  $\omega_{\sigma_{2}}$ denote the two corresponding fundamental
  characters of $I_{K}$.  Write $\om$, $\om_{\sigma_1}$, and $\om_{\sigma_2}$
  for the Teichm\"uller lifts of $\omega$, $\omega_{\sigma_1}$, and
  $\omega_{\sigma_2}$ respectively.

If $\rhobar|_{G_{F_{\p}}}$ is semisimple, then Schein defines a set of predicted weights for $\rhobar$ as follows.

\begin{defn}[\cite{scheinramified}]\label{def: the predicted weights}
    The set $W^{?}(\rhobar)$ is the set of weights $\sigma_{m,n}$ such that there exists $1\leq x\leq e$ with either $\rhobar|_{I_{F_{\p}}}\cong\omega_{\sigma_{1}}^{m+n+x}\omega_{\sigma_{2}}^{m+e-x}\oplus\omega_{\sigma_{1}}^{m+e-x}\omega_{\sigma_{2}}^{m+n+x}$ or $\rhobar|_{I_{F_{\p}}}\cong\omega^{m+n+x}\oplus\omega^{m+e-x}$.
\end{defn}

Let $W(\rhobar)$ be the set of weights $\sigma$ such that
$\rhobar$ is modular of weight $\sigma$.  Our aim in this section is
to prove that $W(\rhobar) \subset W^{?}(\rhobar)$.

\subsection{Breuil modules with descent data}
\label{sec:breuil-modules-with}

Let $k$ be a
finite extension of $\F_p$, define $K_0=W(k)[1/p]$, and let $K$ be a
finite totally ramified extension of $K_0$ of degree~$e'$.  Suppose that $L$ is a $p$-adic subfield of $K$ such
that $K/L$ is Galois and tamely ramified.  Assume further that there is a uniformiser $\pi$
of $\bigO_{K}$ such that $\pi^{e(K/L)}\in L$, where $e(K/L)$ is the
ramification
degree of $K/L$, and fix such a $\pi$.
Since $K/L$ is tamely ramified, 
the category of \emph{Breuil modules with
coefficients and descent data from $K$ to $L$} is defined as follows (see \cite{sav06}). Let $k_E$ be a finite extension of $\Fp$. The category $\BrMod_{\dd,L}$
consists of quadruples $(\mathcal{M},\Fil^1 \mathcal{M},\phi_{1},\{\widehat{g}\})$ where:

\begin{itemize}\item $\mathcal{M}$ is a finitely generated
  $(k\otimes_{\F_p} k_E)[u]/u^{e'p}$-module, free over $k[u]/u^{e'p}$.
\item $\Fil^1 \M$ is a $(k\otimes_{\F_p} k_E)[u]/u^{e'p}$-submodule of $\M$ containing $u^{e'}\M$.
\item $\phi_{1}:\Fil^1\M\to\M$ is $k_E$-linear and $\phi$-semilinear
  (where $\phi:k[u]/u^{e'p}\to k[u]/u^{e'p}$ is the $p$-th power map)
  with image generating $\M$ as a $(k\otimes_{\F_p} k_E)[u]/u^{e'p}$-module.
\item $\widehat{g}:\M\to\M$ are additive bijections for each
  $g\in\Gal(K/L)$, preserving $\Fil^1 \M$, commuting with the $\phi_1$-,
  and $k_E$-actions, and satisfying the following axioms:  if $a\in k\otimes_{\F_{p}} k_E$, $m\in\M$ then $\widehat{g}(au^{i}m)=g(a)((g(\pi)/\pi)^{i}\otimes 1)u^{i}\widehat{g}(m)$; for all
  $g_1,g_2\in\Gal(K/L)$ we have $\widehat{g}_1\circ
  \widehat{g}_2=\widehat{g_1\circ g}_2$; and the map 
  $\widehat{1}$ is the identity.\end{itemize}

The category $\BrMod_{\dd,L}$ is equivalent to the category of finite flat
group schemes over $\mathcal{O}_K$ together with a $k_E$-action and descent
data on the generic fibre from $K$ to $L$ (this equivalence depends on $\pi$).

In particular, to each object of $\BrMod_{\dd,L}$ we can associate a
$G_L$-representation that extends the $G_K$-action on the geometric
points of the corresponding group scheme.  
However, we choose in this paper to adopt the conventions of \cite{bm} and
\cite{sav04}, rather than those of \cite{bcdt}; thus rather than working
with the usual contravariant equivalence of categories, we work with a
covariant version of it, so that our formulae for generic fibres will differ
by duality and a twist from those following the conventions of \cite{bcdt}.
To be precise, we obtain the associated $G_{L}$-representation (which we will refer to as the generic fibre) of an object of $\BrMod_{\dd,L}$ via
the functor $T_{\st,2}^{L}$ defined immediately before \cite[Lem. 4.9]{sav04}.

Let $E$ be a finite extension of $\Qp$ with integers $\OO_E$, maximal
ideal $\m_E$, and residue field $k_E$.  Recall
from \cite[Sec. 2]{sav04} that the functor $D_{\st,2}^{K}$ is an equivalence of categories
  between the category of $E$-representations of $G_L$ which are semistable when
  restricted to $G_K$ and have Hodge-Tate weights in $\{0,1\}$, and
  the category of weakly admissible filtered $(\phi,N)$-modules $D$ with
  descent data and $E$-coefficients such that $\Fil^0 (K \otimes_{K_0}
  D) = K \otimes_{K_0} D$ and $\Fil^2 (K \otimes_{K_0}
  D) = 0$.

Suppose that $\rho$ is a representation
  in the source of $D_{\st,2}^{K}$.  In
  what follows we must assume that the reader has some familiarity with the notation
and terminology of \cite[Sec. 4]{sav04}, but we will endeavour to give
precise references to what we use.  

Write $S$ for the ring
  $S_{K,\OO_E}$ defined in the second paragraph of \emph{loc. cit}.
Then  \cite[Prop. 4.13]{sav04} gives an essentially surjective functor,
  again denoted $T_{\st,2}^{L}$, from the category
  \cite[Def. 4.1]{sav04} of \emph{strongly divisible modules $\M$ with
  $\OO_E$-coefficients and descent data} in $S[1/p] \otimes_{K_0
  \otimes E}
  D_{\st,2}^{K}(\rho)$ to the category of Galois-stable
  $\OO_E$-lattices in~$\rho$.  By \cite[Cor. 4.12]{sav04} this
  functor is compatible with reduction mod $\m_E$, so that
applying $T_{\st,2}^{L}$ to the object $(k \otimes_{\Fp}
k_E)[u]/(u^{e'p}) \otimes_{S/\m_E S} (\M/\m_E \M)$ of $\BrMod_{\dd,L}$ yields a
reduction mod $p$ of $\rho$.

To simplify notation, for the remainder of the paper we write simply
$\M/\m_E\M$ for the above reduction mod ~$\m_E$ of $\M$ in
$\BrMod_{\dd,L}$ (we will never mean the literal $S/\m_E S$-module).  Let $\ell$ denote the residue field of $L$.

\begin{lem}
  \label{lem:connected_breuil_module}
  Let $\overline{\chi} : \Gal(K/L) \rightarrow k_E^{\times}$ be a
  character, and let $c$ be an element of  $(\ell \otimes_{\Fp}
  k_E)^{\times}$.  Define 
  $\M(\overline{\chi},c)$, a free of rank one Breuil
  module with $k_E$-coefficients and descent data from $K$ to $L$, as
  follows: let $\M(\overline{\chi},c)$ have 
generator $v$, and set
$$ \Fil^1 \M(\overline{\chi},c) = \M(\overline{\chi},c),\qquad \phi_1(v) =
cv, \qquad \widehat{g}(v) = (1 \otimes \overline{\chi}(g))v$$
for $g \in \Gal(K/L)$.  Then $T_{\st,2}^{L}(\M(\overline{\chi},1)) =
\overline{\chi}$, and $T_{\st,2}^{L}(\M(\overline{\chi},c))$ is an
unramified
twist of $\overline{\chi}$.
\end{lem}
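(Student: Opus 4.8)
The plan is to compute the Galois representation attached to the rank-one Breuil module $\M(\overline{\chi},c)$ directly from the definition of the functor $T_{\st,2}^{L}$, using the explicit formulae from \cite[Sec. 4]{sav04}. First I would treat the case $c = 1$: here the module is literally the reduction of a rank-one strongly divisible module lifting the character $\overline{\chi}$, and by the compatibility of $T_{\st,2}^{L}$ with reduction mod $\m_E$ (\cite[Cor. 4.12]{sav04}) it suffices to identify the associated $G_L$-representation of a rank-one filtered $(\phi,N)$-module with descent data in which the descent-data action is through (a Teichm\"uller lift of) $\overline{\chi}$, the Frobenius acts by a unit, $N=0$, and the filtration jumps at $0$. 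Because the Hodge--Tate weights are $0$ (the filtration step $\Fil^1$ equals the whole module after the appropriate normalisation, so in the covariant conventions of \cite{sav04} one gets weight $0$ rather than $1$), such a representation is unramified $\times$ (finite-order character induced by the descent data), and choosing the unit Frobenius eigenvalue appropriately — which is exactly the content of setting $\phi_1(v) = v$ — kills the unramified part, leaving precisely $\overline{\chi}$, viewed as a character of $G_L$ via $\Gal(K/L)$. I would carry this out by writing down the $(\phi,N)$-module explicitly and invoking the description of $T_{\st,2}^{L}$ on rank-one objects.

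For general $c \in (\ell \otimes_{\Fp} k_E)^{\times}$, the key observation is that changing $\phi_1(v) = v$ to $\phi_1(v) = cv$ only rescales the Frobenius, and rescaling Frobenius on a rank-one object by a unit in the residue field corresponds exactly to twisting the generic fibre by an unramified character. Concretely, I would exhibit an isomorphism (after possibly extending $k_E$, or by a direct matrix computation over $(\ell\otimes_{\Fp}k_E)[u]/u^{e'p}$) between $\M(\overline{\chi},c)$ and $\M(\overline{\chi},1)$ tensored with the rank-one unramified Breuil module $\M(\mathrm{triv}, c)$ whose generic fibre is the unramified character sending geometric Frobenius to (the Teichm\"uller lift of) $c^{-1}$ or $c$ depending on normalisation; alternatively one notes that any two rank-one Breuil modules with the same descent data and filtration differ precisely by such an unramified twist, since the only remaining datum is the Frobenius unit. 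Either way this immediately gives that $T_{\st,2}^{L}(\M(\overline{\chi},c))$ is an unramified twist of $\overline{\chi}$.

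The main obstacle — really the only subtlety — is bookkeeping around the conventions: the excerpt emphasises that $T_{\st,2}^{L}$ is a \emph{covariant} functor (unlike the contravariant one of \cite{bcdt}), so one must be careful that the filtration condition $\Fil^1 \M = \M$ produces Hodge--Tate weight $0$ and hence genuinely an unramified-times-finite-order character (with no cyclotomic twist), and that the descent-data normalisation $\widehat{g}(v) = (1\otimes\overline{\chi}(g))v$ yields $\overline{\chi}$ itself rather than its inverse or a twist by a power of $\omega$. I would resolve this by quoting the precise normalisation of $T_{\st,2}^{L}$ from immediately before \cite[Lem. 4.9]{sav04} and checking the rank-one computation against a known example (e.g.\ the trivial character, $K = L$), so that the constants come out correctly. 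Once the conventions are pinned down, both assertions follow from the rank-one computation with essentially no further work.
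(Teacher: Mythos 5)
Your strategy for $c=1$ coincides with the paper's: lift $\overline{\chi}$ to its Teichm\"uller lift $\chi$, write down the rank-one weakly admissible filtered $(\phi,N)$-module with descent data attached to it (full $\Fil^1$, $\phi(\mathbf{v})=p\mathbf{v}$, $N=0$, descent data through $\chi$), observe that the evident rank-one strongly divisible lattice inside it reduces mod $\m_E$ to $\M(\overline{\chi},1)$, and invoke the compatibility of $T_{\st,2}^{L}$ with reduction recalled immediately before the lemma. Your bookkeeping point (that $\Fil^1\M=\M$ with $\phi_1(v)=v$ yields Hodge--Tate weight $0$ and no unramified twist, in the covariant conventions of \cite{sav04}) is exactly what the paper's explicit check, following \cite[Ex. 2.14]{sav04}, amounts to.

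Where you diverge is the case of general $c$, and as written this step has a gap. Your fallback justification --- that any two rank-one Breuil modules with the same filtration and descent data differ on generic fibres by an unramified twist ``since the only remaining datum is the Frobenius unit'' --- is a restatement of the assertion to be proved, not an argument: that the Frobenius constant only affects the generic fibre through an unramified character is precisely the content of the lemma. Your primary route, tensoring with a rank-one unramified Breuil module $\M(\mathrm{triv},c)$, presupposes a tensor product operation on Breuil modules with coefficients and descent data, together with its compatibility with $T_{\st,2}^{L}$; neither is set up in the references the paper uses, and the construction is not completely formal (one has to deal with $\phi_1\otimes\phi_1$ versus $\tfrac{1}{p}\phi\otimes\phi$, even if for objects with full $\Fil^1$ this can be made to work). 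The paper avoids both issues by carrying the constant through characteristic zero: choose any $\tilde{c}\in (W(\ell)\otimes_{\Zp}\OO_E)^{\times}$ lifting $c$, set $\phi(\mathbf{v})=p\tilde{c}\mathbf{v}$ in the filtered module $D(\chi,\tilde{c})$, note that this is still weakly admissible and that the corresponding strongly divisible lattice reduces to $\M(\overline{\chi},c)$, and then use the standard fact, at the level of filtered modules, that rescaling Frobenius by a unit changes the associated representation by an unramified twist. In other words, your argument is repaired with no new ideas by doing for general $c$ exactly what you already propose for $c=1$, with $\tilde{c}$ inserted into the lift.
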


\begin{proof}
Let $\chi : \Gal(K/L) \rightarrow E^{\times}$ be the Teichm\"uller lift of
$\overline{\chi}$.  For $\tilde{c} \in (W(\ell) \otimes_{\Zp}
\OO_E)^{\times}$ lifting $c$,
define a weakly admissible filtered $(\phi,N)$-module with descent data
$D(\chi,\tilde{c})$ over $K_0 \otimes_{\Qp} E$ with generator $\mathbf{v}$,
$$\Fil^i (K \otimes_{K_0} D(\chi,\tilde{c})) =
\begin{cases}
  K \otimes_{K_0} D(\chi,\tilde{c}) & \text{if} \ i \le 1 \\
  0 & \text{if} \ i > 1
\end{cases}$$
and
$$ \phi(\mathbf{v})=p\tilde{c}\mathbf{v}, \qquad N=0, \qquad \widehat{g}\cdot \mathbf{v} = (1 \otimes \chi(g))\mathbf{v}.$$
Now one checks directly from the definition that there is a strongly divisible $\OO_E$-module $\M(\chi,\tilde{c})$ contained in
$S \otimes_{W(k)} D(\chi,\tilde{c})$ generated by $\mathbf{v}$ and satisfying
$$\Fil^1 \M(\chi,\tilde{c}) = \M(\chi,\tilde{c}),\qquad \phi_1(\mathbf{v}) = \tilde{c}\mathbf{v}, \qquad
\widehat{g}\cdot \mathbf{v} = (1 \otimes \chi(g))  \mathbf{v}.$$
In particular $\M(\chi,\tilde{c})/\m_E \M(\chi,\tilde{c}) = \M(\overline{\chi},c)$.
One checks (exactly as in \cite[Ex. 2.14]{sav04}) that
$D_{\st,2}^{K}(\chi) = D(\chi,1)$, and it is then standard that the
representation giving rise to $D(\chi,\tilde{c})$ is an unramified
twist of $\chi$.  The result now follows from the discussion of the
functor $T_{\st,2}^{L}$
immediately before the statement of the lemma.
\end{proof}

\subsection{Actual weights are predicted weights}
\label{sec:actual-weights-are}

In this section we make use of the results of \cite{sav06} to prove
results on the possible forms of Galois representations which are
modular of a specified weight. Suppose that $\rhobar$ is modular of
weight $\sigma_{m,n}$. Then by Lemma \ref{lem:typesversusweights},
$\rhobar |_{G_{F_{\p}}}$ has a parallel potentially semistable lift of type $\widetilde{\omega}^{m+n}\oplus\widetilde{\omega}^{m}$.

\begin{lem}\label{lem:determinant of modular of some weight}If $\rhobar$ is modular of weight $\sigma_{m,n}$, then $\det \rhobar|_{I_{F_{\p}}}=\omega^{2m+n+e}$.
\end{lem}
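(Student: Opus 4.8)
The plan is to read off the determinant from the potentially semistable lift supplied by Lemma~\ref{lem:typesversusweights}. Since $\rhobar$ is modular of weight $\sigma_{m,n}$, that lemma produces a lift $\rho$ of $\rhobar|_{G_{F_\p}}$ which is parallel potentially semistable of type $\widetilde\omega^{m+n}\oplus\widetilde\omega^m$. By the definition of ``parallel'' (Definition~\ref{defn:barsotti tate lifts}), $\det\rho=\epsilon\psi$ for a finite-order character $\psi$ of order prime to $p$, so $\det\rho|_{I_{F_\p}}=\epsilon\psi|_{I_{F_\p}}$. Reducing mod $p$, we get $\det\rhobar|_{I_{F_\p}}=\overline\epsilon\,\overline\psi|_{I_{F_\p}}$, and $\overline\epsilon|_{I_{F_\p}}=\omega^e$ by the remark after the definition of $\omega$ (the $e$-th power of the fundamental character of the totally ramified extension $F_\p/\Qp$ is the mod $p$ cyclotomic character).

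It remains to identify $\overline\psi|_{I_{F_\p}}$. First I would compute the determinant of the inertial type: $\det\tau=\widetilde\omega^{m+n}\cdot\widetilde\omega^m=\widetilde\omega^{2m+n}$ as a character of $I_{F_\p}$. On the other hand, $\rho|_{I_{F_\p}}$ has Weil--Deligne representation restricting to $\tau$ on inertia, so $\det\rho|_{I_{F_\p}}$ and $\det\tau$ agree up to the contribution of the Hodge--Tate/cyclotomic part: more precisely, the determinant of a de Rham representation with labeled Hodge--Tate weights all equal to $\{0,1\}$ differs from the determinant of its associated Weil--Deligne representation on inertia exactly by the cyclotomic character, so $\det\rho|_{I_{F_\p}}=\epsilon|_{I_{F_\p}}\cdot(\det\tau)=\epsilon|_{I_{F_\p}}\widetilde\omega^{2m+n}$. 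Comparing with $\det\rho|_{I_{F_\p}}=\epsilon\psi|_{I_{F_\p}}$ gives $\psi|_{I_{F_\p}}=\widetilde\omega^{2m+n}$, hence $\overline\psi|_{I_{F_\p}}=\omega^{2m+n}$. Combining, $\det\rhobar|_{I_{F_\p}}=\omega^e\cdot\omega^{2m+n}=\omega^{2m+n+e}$, as claimed. The semistable (rather than Barsotti--Tate) case $n=p-1$ is handled identically using the second assertion of Lemma~\ref{lem:typesversusweights} and the fact that $\operatorname{sp}_{\widetilde\omega^m\circ\det}$ has the same central/determinant character on inertia as the type $\widetilde\omega^m\oplus\widetilde\omega^m$.

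The only real point requiring care is the bookkeeping relating $\det\rho|_{I_{F_\p}}$ to $\det\tau$ via the normalization of the local Langlands / inertial correspondence and the placement of the cyclotomic twist; this is exactly the kind of normalization the paper has fixed (``a uniformiser corresponds to geometric Frobenius'', and the conventions of \cite{sav04} used in Lemma~\ref{lem:typesversusweights}), so it amounts to tracking one Tate twist consistently rather than a substantive difficulty. I expect no genuine obstacle beyond getting this twist right.
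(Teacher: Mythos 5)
Your proof is correct and follows essentially the same route as the paper: take the parallel potentially semistable lift of type $\widetilde{\omega}^{m+n}\oplus\widetilde{\omega}^{m}$ furnished by Lemma~\ref{lem:typesversusweights}, use the definition of ``parallel'' to write $\det\rho=\epsilon\psi$, and identify $\psi|_{I_{F_\p}}$ with $\widetilde{\omega}^{2m+n}$ via the determinant of the associated Weil--Deligne representation (the paper cites section B.2 of \cite{cdt} for exactly this bookkeeping, which you spell out by hand, including the $n=p-1$ case).
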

\begin{proof}As remarked above, $\rhobar |_{G_{F_{\p}}}$ has a parallel
  potentially semistable lift of type
  $\widetilde{\omega}^{m+n}\oplus\widetilde{\omega}^{m}$, say
  $\rho$. It suffices to check that $\det
  \rho |_{I_{F_{\p}}}=\varepsilon\widetilde{\omega}^{2m+n}$, where
  $\varepsilon$ is the $p$-adic cyclotomic character (recalling again
  that the reduction mod $p$ of $\varepsilon$ is $\omega^{e}$). This follows at once from Definition \ref{defn:barsotti tate lifts} and the results of section B.2 of \cite{cdt}.
\end{proof}

We begin by addressing the case when $\rhobar |_{G_{F_{\p}}}$ is reducible.

\begin{lem}\label{lem:bounding weights: reducible case}Suppose that $\rhobar$ is modular of weight $\sigma_{m,n}$, and that $\rhobar|_{G_{F_{\p}}}$ is reducible. Then $\rhobar|_{I_{F_{\p}}}\cong \left(\begin{matrix}\omega^{m+n+x} & *\\0& \omega^{m+e-x}

\end{matrix}\right)$ or     $\rhobar|_{I_{F_{\p}}}\cong \left(\begin{matrix}\omega^{m+e-x} & *\\0& \omega^{m+n+x}

    \end{matrix}\right)$ for some $1\leq x\leq e$.
\end{lem}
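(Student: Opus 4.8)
The plan is to run a computation with Breuil modules with descent data, in the framework of Section~\ref{sec:breuil-modules-with} and building on Lemma~\ref{lem:connected_breuil_module}. As already observed, Lemma~\ref{lem:typesversusweights} gives a parallel potentially semistable lift $\rho$ of $\rhobar|_{G_{F_{\p}}}$ of type $\widetilde{\omega}^{m+n}\oplus\widetilde{\omega}^{m}$, with $\OO_E$-coefficients for some finite extension $E/\Qp$ (potentially Barsotti--Tate unless $n=p-1$). This type is tame and abelian, so $\rho$ becomes semistable over a totally and tamely ramified Galois extension $K$ of $L:=F_{\p}$, which we may take to be cyclic since $\mu_{p-1}\subseteq F_{\p}$; the descent data from $K$ to $L$ then records the type. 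Choose a $G_{F_{\p}}$-stable $\OO_E$-lattice in $\rho$, write it as $T^{L}_{\st,2}(\M)$ for a strongly divisible module $\M$ with descent data as in \cite[Sec.~4]{sav04}, and reduce mod $\m_E$: the result $\bar\M=\M/\m_E\M$ is a free rank two object of $\BrMod_{\dd,L}$ whose generic fibre $T^{L}_{\st,2}(\bar\M)$ is isomorphic to $\rhobar|_{G_{F_{\p}}}$.

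The first substantive step is to exploit the reducibility of $\rhobar|_{G_{F_{\p}}}$ to split $\bar\M$. Under the equivalence of $\BrMod_{\dd,L}$ with finite flat $\OO_K$-group schemes equipped with a $k_E$-action and descent data, take the schematic closure, in the group scheme attached to $\bar\M$, of a $G_{F_{\p}}$-stable line of $\rhobar|_{G_{F_{\p}}}$; by the usual argument this closure is flat, is a closed subgroup scheme, and (since the line is $G_{F_{\p}}$-stable) is stable under the descent morphisms. Translating back, $\bar\M$ sits in a short exact sequence
$$0\to\M_2\to\bar\M\to\M_1\to 0$$
of objects of $\BrMod_{\dd,L}$ with $\M_1,\M_2$ free of rank one, and $\rhobar|_{I_{F_{\p}}}$ is then a (possibly split) extension carrying $T^{L}_{\st,2}(\M_1)|_{I_{F_{\p}}}$ and $T^{L}_{\st,2}(\M_2)|_{I_{F_{\p}}}$ on the diagonal.

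It remains to classify the rank one objects $\M_i$ that can occur and to compute their generic fibres. Such an $\M_i$ has a generator $v_i$ with $\Fil^1\M_i=u^{r_i}\M_i$ for some $0\le r_i\le e(p-1)$, with $\phi_1(u^{r_i}v_i)=c_iv_i$ for a unit $c_i$, and with $\widehat g(v_i)=(1\otimes\overline{\chi}_i(g))v_i$ for a character $\overline{\chi}_i$ of $\Gal(K/L)$; this is Lemma~\ref{lem:connected_breuil_module} and its evident variant allowing $\Fil^1\M_i\neq\M_i$, and the recipe for $T^{L}_{\st,2}$ of \cite[Sec.~4]{sav04} then expresses $T^{L}_{\st,2}(\M_i)|_{I_{F_{\p}}}$ explicitly in terms of $\overline{\chi}_i$ and $r_i$. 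Since this character is a power of $\omega$, its restriction to $I_K$ is trivial; as this restriction is also the $r_i$-th power of the fundamental character of $I_K$, we get $(p-1)\mid r_i$, say $r_i=(p-1)x_i$ with $0\le x_i\le e$. The type of $\rho$ identifies $\{\overline{\chi}_1,\overline{\chi}_2\}$ with the reduction of $\{\widetilde{\omega}^{m+n},\widetilde{\omega}^{m}\}$, while the hypothesis that $\rho$ is parallel with Hodge--Tate weights in $\{0,1\}$ forces $r_1+r_2=e(p-1)$, i.e.\ $x_1+x_2=e$. Putting these together, $\{T^{L}_{\st,2}(\M_1),T^{L}_{\st,2}(\M_2)\}|_{I_{F_{\p}}}=\{\omega^{m+n+x},\omega^{m+e-x}\}$ for some $x\in\{0,1,\dots,e\}$, and with the second paragraph this is the statement of the lemma provided the boundary value $x=0$ is excluded. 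The two orderings of $\overline{\chi}_1,\overline{\chi}_2$ (equivalently, of which of $T^{L}_{\st,2}(\M_1),T^{L}_{\st,2}(\M_2)$ is the subrepresentation) account for the two displayed matrices.

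The step I expect to cost the most work is exactly this last point: ruling out $x=0$, equivalently determining which of the two type-characters sits on the rank one piece with $\Fil^1\M_i=\M_i$ and which on the piece with $\Fil^1\M_i=u^{e(p-1)}\M_i$. This comes down to a more careful analysis of the interaction of the descent character with the Hodge filtration on $\bar\M$, in the same spirit as the rank one computations of \cite{sav04} and \cite{sav06}, with particular attention to the boundary cases (one of the $r_i$ equal to $0$ or $e(p-1)$, and the semistable non-crystalline case $n=p-1$, in which $\rho$ is a twist of a Steinberg-type representation but the formalism of \cite{sav04} applies without change). The remaining verifications---that the schematic closure respects the descent data, and the explicit generic-fibre recipe for rank one modules---are routine, along the lines of standard Breuil-module computations in the literature.
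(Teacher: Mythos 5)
Your overall framework (lift of the relevant type, strongly divisible module, reduction, scheme-theoretic closure, classification of rank one Breuil modules with descent data, determinant) is sound, but the step you defer to the end --- excluding the boundary value $x=0$ --- is not merely "the most work": it cannot be done by the method you propose. The only input your argument uses is the existence of a parallel potentially Barsotti--Tate (or semistable) lift of the \emph{principal series} type $\widetilde{\omega}^{m+n}\oplus\widetilde{\omega}^{m}$, and that existence genuinely does not rule out the excluded configuration. Indeed, up to unramified twists the representation $\widetilde{\omega}^{m+n}\oplus\widetilde{\omega}^{m}\varepsilon$ (with $\varepsilon$ the cyclotomic character) is itself a parallel potentially Barsotti--Tate lift of this type, and its reduction has inertial characters $\omega^{m+n}$ and $\omega^{m+e}$ --- exactly the $x=0$ case, which for $e<p-1$ is in general not of the form demanded by the lemma. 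Structurally this is unavoidable: $\sigma(\tau_{\widetilde{\omega}^{m+n},\widetilde{\omega}^{m}})\otimes\F$ has the two Jordan--H\"older factors $\sigma_{m,n}$ and $\sigma_{m+n,p-1-n}$, so the most the lift can tell you is the union of the predicted sets for both weights, and the $x=0$ pair $\{\omega^{m+n},\omega^{m+e}\}$ is precisely the $x=e$ configuration predicted for the \emph{other} factor $\sigma_{m+n,p-1-n}$. No refinement of the rank-one computation over $K=F_{\p}(\pi_\p^{1/(p-1)})$ can recover information that is not present in the hypothesis.

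The paper's proof gets around exactly this point by choosing a different type. For $n<p-1$ it uses the cuspidal type $\tau=\widetilde{\omega}_{\sigma_1}^{m+n+1}\widetilde{\omega}_{\sigma_2}^{m-1}\oplus\widetilde{\omega}_{\sigma_1}^{m-1}\widetilde{\omega}_{\sigma_2}^{m+n+1}$, of which $\sigma_{m,n}$ is also a Jordan--H\"older factor of $\sigma(\tau)\otimes_{\OO}\F$ by Lemma~\ref{441}(4); the corresponding Breuil module lives over the degree $p^2-1$ Kummer extension, and the rank-one classification with niveau two descent data (Theorem 3.5 of \cite{sav06}) gives the congruence $r\equiv(p-1)\kappa\pmod{p^2-1}$ with $\kappa$ pinned down to two values by the type, which forces the resulting exponent into the range $[1,e]$ with no boundary ambiguity (the predicted sets of the two constituents of the cuspidal type both lie inside the set allowed by the lemma, unlike for the principal series type). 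The case $n=p-1$ is treated separately: a non-crystalline semistable lift is automatically ordinary, and in the potentially crystalline case one reduces to Raynaud's description of finite flat group schemes (Theorem 3.4.3 of \cite{MR0419467}) plus Lemma~\ref{lem:determinant of modular of some weight}. To repair your argument you would have to switch to the cuspidal type (or supply some genuinely new input beyond the principal series lift); as written, the proposal has a gap at its decisive step.
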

\begin{proof}
We  consider first the case $n=p-1$. By Lemma
\ref{lem:typesversusweights}, $\rhobar|_{G_{F_\p}}$ has a parallel potentially
semistable lift $\rho$ of type
$\widetilde{\omega}^{m}\oplus\widetilde{\omega}^{m}$.  

If $\rho$ is not
potentially crystalline then it is automatically ordinary in the sense of
Definition~\ref{defn:ordinary}.  (We briefly recall the argument:
twisting by a character, one reduces to the case where $\rho$ is semistable, i.e.,
where $m=0$.  Then it follows from the relation $N \varphi = p \varphi N$ that the kernel of $N$ on the weakly admissible filtered
$(\varphi,N)$-module associated to $\rho$ must be a free of rank one
submodule with Newton polygon of constant slope $-1$, hence
 itself is weakly admissible.  Therefore
$\rho$ has a subcharacter that is an unramified twist of the
cyclotomic character.)  So the result in this case follows (with $x=e$). 

 If the lift $\rho$ is in fact
potentially crystalline then without loss of generality we may twist
and  suppose that $m=0$, and thus that $\rhobar$ has a crystalline
lift of Hodge-Tate weights 0 and 1. Then $\rhobar|_{G_{F_\p}}$ is
flat, and by  Theorem 3.4.3 of \cite{MR0419467} it has a
subcharacter of the form $\omega^i$ for some $0 \le i \le e$.  Since
$m=0$ and $n=p-1$, this subcharacter is of the form $\omega^{m+n+x}$
with $1 \le x \le e$ whenever $i > 0$, and is of the form
$\omega^{m+e-x}$ whenever $i < e$.  The result now follows from Lemma \ref{lem:determinant of
  modular of some weight}.

For the remainder of the proof suppose that $n < p-1$.
Let $K$ be the splitting field of $u^{p^2-1} - \pi_{\p}$ over $F_{\p}$, and
let $\varpi$ be a choice of $\pi_{\p}^{1/(p^2-1)}$ in $K$.
Let  $k_2$ denote the residue field of $K$, and if $g \in
\Gal(K/F_{\p})$
define
$\overline{\eta}(g)$ to be the image of $g(\varpi)/\varpi$ in $k_2$.
The field $F_{\p}$ will play the role of $L$ in the argument.

    Set $$\tau =
    \widetilde{\omega}_{\sigma_1}^{m+n+1}\widetilde{\omega}_{\sigma_2}^{m-1}\oplus\widetilde{\omega}_{\sigma_1}^{m-1}\widetilde{\omega}_{\sigma_2}^{m+n+1}
    = \widetilde{\omega}_{\sigma_1}^{(n+2) + (p+1)(m-1)} \oplus
    \widetilde{\omega}_{\sigma_1}^{p((n+2) + (p+1)(m-1))}.$$  
By Lemma~\ref{441}(4) and the description of the
 inertial local Langlands correspondence that precedes
 Lemma~\ref{lem:typesversusweights}, 
the weight $\sigma_{m,n}$ is a
    Jordan-H\"older factor of $\sigma(\tau) \otimes_{\OO} \F$.
 It follows from Lemma \ref{lem:typesversusweights} that $\rhobar|_{G_{F_{\p}}}$
        has a lift to a parallel potentially Barsotti-Tate representation of
        type $\tau$,
        and we may suppose this lift to be valued in $\bigO_{E}$ for $E$ some
        finite extension of $\Qp$ with
residue field $k_E$ into which $k_2$ embeds, and with uniformiser
$\pi_E$.  The
lift becomes Barsotti-Tate over $K$, and the
        $\pi_{E}$-torsion in the corresponding $p$-divisible group
        gives rise to a finite flat $k_E$-module scheme $\G$ over $\bigO_{K}$ with
        descent data  to $F_{\p}$, with generic fibre
        $\rhobar|_{G_{F_{\p}}}$, such that the descent data to $F_{\p}$ is
$\omega_{\sigma_1}^{m+n+1}\omega_{\sigma_2}^{m-1}\oplus\omega_{\sigma_1}^{m-1}\omega_{\sigma_2}^{m+n+1}$.
We claim that this implies the lemma.

 Suppose that $\rhobar|_{G_{F_\p}}\cong
        \left(\begin{matrix}\psi_{1} & *\\0&
            \psi_{2} \end{matrix}\right)$. Then by a scheme-theoretic
        closure argument \cite[Lem. 4.1.3]{bcdt},~$\G$ must contain a
        finite flat subscheme $\G_1$ with descent data which has generic
        fibre $\psi_{1}$.  Twisting by a suitable power of
$\omega$, we may assume $m=0$.
By Theorem 3.5 of \cite{sav06}, we may write
        the  Breuil module $\mathcal{M}$ corresponding to $\G_1$
in the
        form
\begin{itemize}
\item $\mathcal{M}=((k_2 \otimes_{\Fp} k_E)[u]/u^{e(p^2-1)p})\cdot w$
\item $\Fil^1 \mathcal{M}=u^{r}\mathcal{M}$
\item $\phi_{1}(u^{r}w)=cw$ for some $c \in k_E^{\times}$
\item  $\widehat{g}(w)=(\overline{\eta}(g)^{\kappa} \otimes 1)w$ for $g
  \in \Gal(K/F_{\p})$.
\end{itemize}
Here $\kappa,r$ are integers with $\kappa \in [0,p^2-1)$ and $r \in
[0,e(p^2-1)]$ satisfying  $\kappa \equiv p(\kappa+r) \pmod{p^2-1}$ or
equivalently
$r \equiv (p-1)\kappa \pmod{p^2-1}$.  Since the descent data on $\G$
is $\omega_{\sigma_1}^{n+1-p} \oplus
\omega_{\sigma_2}^{n+1-p}$ it follows that
$\kappa$ must be congruent to one of $n+1-p$ or
$p(n+1-p) \pmod{p^2-1}$; this follows also from \cite[Cor 5.2]{geesavittquaternionalgebras}.
Hence $\kappa = n+p^2-p$ or $pn+(p-1)$.

In the first case we find $r \equiv (p-1)(n+2) \pmod{p^2-1}$, and
therefore $r = (p-1)(n+2) + y(p^2-1)$ for some $0 \le y < e$.
One checks that there is a map $f: \M(\omega^{n+y+1},c) \rightarrow \M$
mapping $v \mapsto u^{pr/(p-1)} w$, where $\M(\omega^{n+y+1},c)$ is
as defined in Lemma~\ref{lem:connected_breuil_module}.  The kernel of this map does not
contain
any free $k_2[u]/u^{e(p^2-1)p}$-submodules, and so by \cite[Prop
8.3]{SavittCompositio} the map $f$ induces an isomorphism on generic
fibres.
By Lemma~\ref{lem:connected_breuil_module} we deduce that $\psi_1 |_{I_{F_{\p}}} =
\omega^{n+x}$
with $x=y+1 \in [1,e]$.

In the second case we find $r \equiv (p-1)(p-n-1) \pmod{p^2-1}$, and
therefore $r = (p-1)(p-n-1) + y(p^2-1)$ for some $0 \le y < e$.
One checks that there is a map $\M(\omega^{y},c) \rightarrow \M$
mapping $v \mapsto u^{pr/(p-1)} w$.  As in the previous case this map
induces an isomorphism on generic fibres, and by Lemma~\ref{lem:connected_breuil_module} we deduce that $\psi_1 |_{I_{F_{\p}}} =
\omega^{e-x}$
with $x=e-y \in [1,e]$.

Now in either of the two cases the result follows from Lemma \ref{lem:determinant of modular of some
  weight}.\end{proof}

We now consider the irreducible case.

\begin{lem}\label{lem:bounding weights: niveau 2}Suppose that $\rhobar$ is modular of weight $\sigma_{m,n}$, and that $\rhobar|_{G_{F_{\p}}}$ is irreducible. Then  $\rhobar|_{I_{F_{\p}}}\cong\omega_{\sigma_{1}}^{m+n+x}\omega_{\sigma_{2}}^{m+e-x}\oplus\omega_{\sigma_{1}}^{m+e-x}\omega_{\sigma_{2}}^{m+n+x}$ for some $1\leq x\leq e$.

\end{lem}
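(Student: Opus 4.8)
The plan is to mimic the proof of Lemma~\ref{lem:bounding weights: reducible case}, dealing with the case $n=p-1$ separately and then carrying out a Breuil module computation when $n<p-1$; the one new difficulty is that, $\rhobar|_{G_{F_\p}}$ being irreducible, there is no longer a rank one subscheme to extract, so the rank two Breuil module attached to $\rhobar|_{G_{F_\p}}$ (equivalently, the situation after descending to the unramified quadratic extension of $F_\p$) must be handled directly.

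I would first observe that the case $n=p-1$ is vacuous: by Lemma~\ref{lem:typesversusweights}, if $\rhobar$ were modular of weight $\sigma_{m,p-1}$ then $\rhobar|_{G_{F_\p}}$ would admit a parallel potentially semistable lift $\rho$ of type $\widetilde{\omega}^m\oplus\widetilde{\omega}^m$ whose associated automorphic representation is a twist of Steinberg, so that $\rho|_{G_{F_\p}}$ is a twist of the Steinberg representation and in particular is not potentially crystalline; then by the argument recalled in the proof of Lemma~\ref{lem:bounding weights: reducible case} (the relation $N\varphi = p\varphi N$ forcing a weakly admissible subobject of constant slope $-1$), $\rho$ is ordinary, whence $\rhobar|_{G_{F_\p}}$ is reducible, contrary to hypothesis. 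So no $\rhobar$ satisfies the hypotheses of the lemma when $n=p-1$, and there is nothing to prove.

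For $n<p-1$ I would set things up exactly as in Lemma~\ref{lem:bounding weights: reducible case}: take $K$ the splitting field of $u^{p^2-1}-\pi_\p$ over $F_\p$, a choice $\varpi$ of $\pi_\p^{1/(p^2-1)}$, residue field $k_2$, and $\overline{\eta}(g)$ the image of $g(\varpi)/\varpi$ in $k_2$; and set $\tau = \widetilde{\omega}_{\sigma_1}^{m+n+1}\widetilde{\omega}_{\sigma_2}^{m-1}\oplus\widetilde{\omega}_{\sigma_1}^{m-1}\widetilde{\omega}_{\sigma_2}^{m+n+1}$. By Lemma~\ref{441}(4), $\sigma_{m,n}$ is a Jordan--H\"older factor of $\sigma(\tau)\otimes_{\bigO}\F$, so by Lemma~\ref{lem:typesversusweights} the representation $\rhobar|_{G_{F_\p}}$ has a parallel potentially Barsotti--Tate lift of type $\tau$, valued in some $\bigO_E$ with $k_2\hookrightarrow k_E$, which becomes Barsotti--Tate over $K$; the $\pi_E$-torsion of the corresponding $p$-divisible group gives a finite flat $k_E$-module scheme $\G$ over $\bigO_K$ with descent data to $F_\p$, with generic fibre $\rhobar|_{G_{F_\p}}$ and descent data $\omega_{\sigma_1}^{m+n+1}\omega_{\sigma_2}^{m-1}\oplus\omega_{\sigma_1}^{m-1}\omega_{\sigma_2}^{m+n+1}$. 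Twisting by a power of $\omega$ I would reduce to the case $m=0$. Now, instead of passing to a subscheme of $\G$, I would analyse the rank two Breuil module $\M$ of $\G$ directly by means of the classification of Breuil modules with niveau two descent data of \cite{sav06} (see also \cite[Cor.~5.2]{geesavittquaternionalgebras}): this puts $\M$ into a normal form governed by a descent-data exponent $\kappa\in[0,p^2-1)$ and a parameter $r$, with compatibility of the $\widehat{g}$ with $\phi_1$ forcing $r\equiv(p-1)\kappa\pmod{p^2-1}$ and the prescribed descent data forcing $\kappa$ to equal $n+p^2-p$ or $pn+(p-1)$, exactly as in the previous lemma. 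Writing $r=(p-1)(n+2)+y(p^2-1)$ (respectively $r=(p-1)(p-1-n)+y(p^2-1)$) with $0\le y<e$ and arguing with the map $v\mapsto u^{pr/(p-1)}w$ as there (using the niveau two analogue of Lemma~\ref{lem:connected_breuil_module} and \cite[Prop.~8.3]{SavittCompositio}), one reads off $\rhobar|_{I_{F_\p}}\cong\omega_{\sigma_1}^{n+x}\omega_{\sigma_2}^{e-x}\oplus\omega_{\sigma_1}^{e-x}\omega_{\sigma_2}^{n+x}$ with $x=y+1$ (respectively $x=e-y$), and, untwisting, the asserted form with $1\le x\le e$; the determinant constraint of Lemma~\ref{lem:determinant of modular of some weight} provides a check.

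The main obstacle is this last step of Breuil module bookkeeping in the irreducible case. Having lost the clean sub-object argument available in the reducible case, one must either establish and apply the correct normal form for rank two Breuil modules carrying niveau two descent data, or --- equivalently --- descend the whole problem to the unramified quadratic extension $F_{\p^2}$ of $F_\p$ (over which $\rhobar$ becomes reducible, so that a rank one subscheme is again available, and $I_{F_{\p^2}}=I_{F_\p}$ so that $\rhobar|_{I_{F_\p}}\cong\psi|_{I_{F_\p}}\oplus\psi^{\Frob}|_{I_{F_\p}}$ for the generic fibre $\psi$ of that subscheme), taking care in the latter approach to track correctly how the inertial type prescribed as descent data from $K$ to $F_\p$ restricts to descent data from $K$ to $F_{\p^2}$, including the interaction of the Teichm\"uller lifts of the fundamental characters with the two embeddings $\F_{p^2}\hookrightarrow k_E$ and with the Frobenius twist. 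Once the normal form is correctly in hand, I expect the verification that the constraints on $\kappa$ together with the determinant relation cut out exactly the weights $\sigma_{m,n}$ with $\rhobar|_{I_{F_\p}}$ of the stated shape to be routine, just as in Lemma~\ref{lem:bounding weights: reducible case}.
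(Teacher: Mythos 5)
Your treatment of the case $n=p-1$ is wrong, and this is a genuine gap. You claim this case is vacuous because a weight-$\sigma_{m,p-1}$ modular $\rhobar$ would have a local lift that is "a twist of the Steinberg representation and in particular not potentially crystalline," hence ordinary, hence locally reducible. But Lemma \ref{lem:typesversusweights} only furnishes a \emph{parallel potentially semistable} lift of type $\widetilde{\omega}^m\oplus\widetilde{\omega}^m$; it does not assert that the lift fails to be potentially crystalline. Containing $\operatorname{sp}^*$ as a $\GL_2(\bigO_{F_\p})$-type does not force $\pi_\p$ to be special rather than an unramified-twist principal series with scalar inertial type, and in the latter case the lift is potentially Barsotti--Tate; passing from "special $\pi_\p$" to "$N\neq 0$ on the Galois side" would in any case need local--global compatibility including the monodromy operator, which the paper never invokes (its proof of Lemma \ref{lem:bounding weights: reducible case} explicitly allows both the potentially crystalline and non-potentially-crystalline possibilities). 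Decisively, your conclusion contradicts the paper's own main results: by Lemma \ref{lem:existence of lifts in the tame niveau 2 case} with $n=0$ (such a lift of an irreducible $\rhobar|_{G_{F_\p}}$ is automatically non-ordinary) and Theorem \ref{thm:non-ordinary lifts and non-ordinary weights}, a locally irreducible $\rhobar$ of the appropriate inertial shape \emph{is} modular of weight $\sigma_{m,p-1}$, so the case must be proved, not dismissed. The paper handles it (together with $n=0$) as follows: if the scalar-type lift is not potentially crystalline it is ordinary, contradicting irreducibility; so it is potentially crystalline, and after twisting to $m=0$ one determines the possible generic fibres of finite flat group schemes over $\bigO_{F_\p}$ by Raynaud (Theorem 3.4.3 of \cite{MR0419467}), combined with Lemma \ref{lem:determinant of modular of some weight}.

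For $n<p-1$ your strategy is sound and is essentially the one the paper gestures at: the paper in fact outsources this case to (the proof of) Proposition 3.3 of \cite{scheinramified}, which is exactly the purely local computation you describe. One caveat on your sketch: there is no rank-one normal form "exactly as in the previous lemma" for the rank-two module over $F_\p$, so you are really committed to your second route, namely restricting to the unramified quadratic extension $L$ (where $\rhobar$ splits as $\psi\oplus\psi^{\Frob}$ and $I_L=I_{F_\p}$), taking the scheme-theoretic closure of $\psi$, and classifying rank-one Breuil modules with descent data from $K$ to $L$. There the residue field of $L$ is $\F_{p^2}$, so the rank-one analysis involves the idempotent decomposition (a pair of filtration exponents and a modified compatibility congruence), and the bookkeeping is genuinely different from, not identical to, the niveau-one computation in Lemma \ref{lem:bounding weights: reducible case}; this is doable (it is what Schein does), but as written your proposal elides it.
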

\begin{proof}This may be proved in essentially the same way as Lemma
  \ref{lem:bounding weights: reducible case}. However, the result
  follows easily from the results of \cite{scheinramified}. Note that
  $\sigma_{m,n}$ is a Jordan-H\"{o}lder factor of
  $\det^m\otimes\Ind_{B(\Fp)}^{\GL_2(\Fp)}(\delta^{n})$. Then in the case $n\neq 0$, $p-1$
  the result follows at once from (the proof of) Proposition 3.3 of
  \cite{scheinramified}; while Schein works in the case of an
  indefinite quaternion algebra, his arguments are ultimately purely
  local, using Raynaud's classificiation of finite flat group schemes
  of type $(p,\dots,p)$.

In the case $n= 0$ or $p-1$ a very similar but rather easier analysis
applies. In this case, by Lemma \ref{lem:typesversusweights},
$\rhobar|_{G_{F_\p}}$ has a parallel potentially semistable lift of type
$\widetilde{\omega}^{m}\oplus\widetilde{\omega}^{m}$.  If it is not
potentially crystalline then it is automatically ordinary, a
contradiction. Thus the lift must in fact be potentially crystalline,
and after twisting, one needs only to consider the case $m=0$, and one
is reduced to determining the possible generic fibres of finite flat
group schemes over $F_{\p}$, which is immediate from Raynaud's
analysis (Theorem 3.4.3 of \cite{MR0419467}), together with Lemma \ref{lem:determinant of modular of some weight}.
\end{proof}

Putting Lemma \ref{lem:bounding
    weights: reducible case} and Lemma \ref{lem:bounding weights:
    niveau 2} together, we obtain the following.

\begin{cor}\label{cor:tame case, necessary conditions on galois rep}If
  $\rhobar|_{G_{F_{\p}}}$ is semisimple, then $W(\rhobar)\subset W^{?}(\rhobar)$.
\end{cor}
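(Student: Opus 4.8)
The plan is simply to combine the two preceding lemmas. Let $\sigma_{m,n}$ be an arbitrary weight in $W(\rhobar)$, so that $\rhobar$ is modular of weight $\sigma_{m,n}$. Since $\rhobar|_{G_{F_{\p}}}$ is semisimple, it is either a direct sum of two characters or it is irreducible, and I would dispose of these two cases in turn.

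In the reducible case we may write $\rhobar|_{G_{F_{\p}}}\cong\psi_1\oplus\psi_2$ for characters $\psi_1,\psi_2$ of $G_{F_{\p}}$. Applying Lemma \ref{lem:bounding weights: reducible case}, $\rhobar|_{I_{F_{\p}}}$ is conjugate to an upper-triangular representation whose diagonal characters are $\omega^{m+n+x}$ and $\omega^{m+e-x}$ (in one order or the other) for some $1\le x\le e$. But $\rhobar|_{I_{F_{\p}}}$ is itself semisimple, being $\overline{\psi}_1|_{I_{F_{\p}}}\oplus\overline{\psi}_2|_{I_{F_{\p}}}$, so this extension splits and $\rhobar|_{I_{F_{\p}}}\cong\omega^{m+n+x}\oplus\omega^{m+e-x}$. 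This is exactly the second alternative in Definition \ref{def: the predicted weights}, so $\sigma_{m,n}\in W^{?}(\rhobar)$.

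In the irreducible case, Lemma \ref{lem:bounding weights: niveau 2} gives directly that $\rhobar|_{I_{F_{\p}}}\cong\omega_{\sigma_{1}}^{m+n+x}\omega_{\sigma_{2}}^{m+e-x}\oplus\omega_{\sigma_{1}}^{m+e-x}\omega_{\sigma_{2}}^{m+n+x}$ for some $1\le x\le e$, which is precisely the first alternative in Definition \ref{def: the predicted weights}; hence again $\sigma_{m,n}\in W^{?}(\rhobar)$. Since $\sigma_{m,n}$ was an arbitrary weight in $W(\rhobar)$, we conclude $W(\rhobar)\subset W^{?}(\rhobar)$.

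The only point needing any care is the passage from the upper-triangular shape output by Lemma \ref{lem:bounding weights: reducible case} to the split form demanded by the definition of $W^{?}$, and this is immediate from the semisimplicity hypothesis. All of the substantive work --- the Breuil-module computations, the appeals to Raynaud's classification and to the results of \cite{scheinramified} --- has already been carried out in the proofs of the two lemmas, so there is no real obstacle remaining at this stage; the corollary is essentially a bookkeeping statement.
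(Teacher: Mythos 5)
Your proposal is correct and is exactly the paper's argument: the paper deduces the corollary by "putting together" Lemma \ref{lem:bounding weights: reducible case} and Lemma \ref{lem:bounding weights: niveau 2}, with the passage from the upper-triangular conclusion to the split form in the reducible case being immediate from semisimplicity, just as you note.
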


If $e \geq p-1$ then one can check that
  $W^{?}(\rhobar)$ is the whole set of weights $\sigma_{m,n}$ with
  $\det \rhobar|_{I_{F_{\p}}}=\omega^{2m+n+e}$, so in that case Corollary~\ref{cor:tame case, necessary conditions on galois rep} follows
  from Lemma~\ref{lem:determinant of modular of some weight} alone.

\section{Local lifts}\label{sec: crystalline chars}

If $\rhobar$ is to be modular of weight
$\sigma \in W^{?}(\rhobar)$, then Lemma~\ref{lem:typesversusweights} entails
that $\rhobar |_{G_{F_{\p}}}$ must have a parallel potentially Barsotti-Tate lift of
some particular type or types.  The
existence of certain of these local lifts will be a key ingredient
in the proof that $\rhobar$ is indeed modular of weight $\sigma$.  Our
aim in this section is to produce these lifts.

\subsection{The niveau $1$ case}
\label{sec:niveau-1-case}

Let $K$ be a finite extension of $\Qp$, and let $S_{K}$ denote the set of embeddings $\tau:K\into\Qpbar$.
\begin{defn}
    If $\rho$ is a $\Qpbar$-valued crystalline representation of $G_{K}$ and $\tau\in S_{K}$ we say that the Hodge-Tate weights of $\rho$ with respect to $\tau$ are the $i$ for which $$gr^{-i}_{\tau}(\rho):=gr^{-i}((\rho\otimes_{\Qp}\mathrm{B}_{\mathrm{dR}})^{G_{K}}\otimes_{\Qpbar\otimes_{\Qp}K,1\otimes\tau}\Qpbar)\neq 0,$$ counted with multiplicity $\dim gr^{-i}_{\tau}(\rho)$. We denote the multiset of Hodge-Tate weights of $\rho$ with respect to $\tau$ by $\HT_{\tau}(\rho)$; it has cardinality $\dim \rho$.
\end{defn}
If $\sigma\in S_{K}$, we let $\overline{\sigma}$ be the induced element of $S_{k}$.


\begin{lem}\label{lem:existence of crystalline chars}Let $A=\{a_{\tau}\}_{\tau\in S_{K}}$ be a set of integers. Then there is a crystalline character $\varepsilon^{K}_{A}$ of $G_{K}$ such that $HT_{\tau}(\varepsilon^K_{A})=a_{\tau}$ for all $\tau\in S_K$, and $\varepsilon^K_{A}$ is unique up to unramified twist. Furthermore, $\overline{\varepsilon^{K}_{A}}|_{I_{K}}=\prod_{\tau\in S_{k}}\omega_{\tau}^{b_{\tau}},$ where $$b_{\tau}=\sum_{\sigma\in S_{K}:\overline{\sigma}=\tau}a_{\sigma}.$$
\end{lem}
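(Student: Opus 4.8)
The plan is to construct $\varepsilon^K_A$ by Lubin--Tate theory, or more concretely by writing down explicit crystalline characters via the reciprocity map and then taking products. First I would recall the basic building block: for each embedding $\tau_0 \colon K \into \Qpbar$ there is a crystalline character $\lambda_{\tau_0}$ of $G_K$ with $\HT_\tau(\lambda_{\tau_0}) = 1$ if $\tau = \tau_0$ and $0$ otherwise. One way to produce $\lambda_{\tau_0}$: let $K'/K$ be a finite Galois extension large enough that it contains the images of all embeddings and such that $\Qp^{\mathrm{ur}} K'$ absorbs the relevant Lubin--Tate towers; then the Lubin--Tate character attached to a uniformiser of $\bigO_K$ together with the embedding $\tau_0$ gives, after composing with $\tau_0$, a crystalline character of $G_K$ with the single nonzero Hodge--Tate weight at $\tau_0$. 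Then set $\varepsilon^K_A = \prod_{\tau \in S_K} \lambda_\tau^{a_\tau}$; since Hodge--Tate weights are additive in tensor products, $\HT_\tau(\varepsilon^K_A) = a_\tau$ for every $\tau$.

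Next I would address uniqueness up to unramified twist. If $\chi_1, \chi_2$ are two crystalline characters of $G_K$ with the same Hodge--Tate weights at every embedding, then $\chi_1 \chi_2^{-1}$ is crystalline with all Hodge--Tate weights equal to $0$; a crystalline representation with all Hodge--Tate weights $0$ is unramified (its associated weakly admissible filtered module has trivial filtration, and for a one-dimensional such object weak admissibility forces the Frobenius slope to be $0$, i.e., the character is unramified). Hence $\chi_1$ and $\chi_2$ differ by an unramified twist.

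Finally, for the reduction mod $p$ formula, I would compute $\overline{\varepsilon^K_A}|_{I_K}$ via local class field theory. By multiplicativity it suffices to treat each $\lambda_\tau$. Restricting the Lubin--Tate character to inertia and identifying $I_K \isoto \bigO_K^\times$ under the reciprocity map (normalised so uniformisers go to geometric Frobenius, as fixed in section~\ref{sec: notation}), the character $\lambda_\tau|_{I_K}$ is, on $\bigO_K^\times$, the map $x \mapsto \tau(x)$ followed by projection; reducing mod $p$, $\tau$ induces an embedding $\overline{\tau} \colon k \into \Fpbar$ and $\overline{\lambda_\tau}|_{I_K}$ becomes $\omega_{\overline{\tau}}$. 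Collecting exponents, $\overline{\varepsilon^K_A}|_{I_K} = \prod_{\tau \in S_K} \omega_{\overline{\tau}}^{a_\tau} = \prod_{\sigma \in S_k} \omega_\sigma^{b_\sigma}$ with $b_\sigma = \sum_{\tau \colon \overline{\tau} = \sigma} a_\tau$, as claimed. The one point requiring care --- and the step I expect to be the main technical obstacle --- is pinning down the correct normalisation (signs/geometric-vs-arithmetic Frobenius, and whether the Lubin--Tate character itself or its inverse carries Hodge--Tate weight $+1$) so that the reduction formula comes out with $\omega_\tau^{a_\tau}$ rather than $\omega_\tau^{-a_\tau}$; this is consistent with the convention already used in the remark that $\omega^{e(K/\Qp)}$ is the mod $p$ cyclotomic character (take $K/\Qp$ with all $a_\tau = 1$, so $A$ is the "cyclotomic" weight and $b = e(K/\Qp)$ when $k = \Fp$). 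I would verify the normalisation against this cyclotomic sanity check rather than rederiving Lubin--Tate theory from scratch.
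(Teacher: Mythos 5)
Your proposal is correct, but it proves the lemma by a genuinely different route from the paper. The paper's (sketched) argument stays entirely inside $p$-adic Hodge theory: existence comes from writing down the rank-one weakly admissible filtered module directly, and the reduction formula is obtained by constructing a strongly divisible $\OO_E$-lattice in $\varepsilon^K_A$ (after reducing to the case of a single weight equal to $1$) and applying \cite[Cor.~2.7]{sav06} to its reduction mod $p$ --- the same integral machinery used for the two-dimensional computations later in the paper. You instead build the basic characters $\lambda_\tau$ from Lubin--Tate theory and read off the reduction directly from local class field theory, which avoids integral $p$-adic Hodge theory altogether for this step and is the argument most commonly seen for statements of this kind; the uniqueness step is the same in both treatments. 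Two small remarks: the auxiliary extension $K'$ ``absorbing the Lubin--Tate towers'' is unnecessary, since the Lubin--Tate character attached to a uniformiser of $K$ is already a character of $G_K$ and one simply composes with $\tau_0$ (crystallinity of each composite follows because these composites are exactly the $\Qpbar$-direct summands of the rational Tate module of the Lubin--Tate formal group, which is crystalline as a $p$-divisible group over $\bigO_K$, with Lie algebra giving the single labelled weight at $\tau_0$). And your normalisation worry does resolve in your favour with the paper's conventions: with the geometric normalisation of the Artin map a unit $u$ acts on $\mu_{p^\infty}$ by $\zeta\mapsto\zeta^{u}$, so for $K=\Qp$, $\pi=p$ the Lubin--Tate character is literally the cyclotomic character, which has weight $+1$ in the paper's $\gr^{-i}$ convention and reduces to $\omega$; this pins down the sign, and the check is not circular since $\overline{\varepsilon}|_{I_{\Qp}}=\omega$ is classical and independent of the lemma. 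Given that the paper itself only sketches the proof and defers details to \cite{geesavitt}, your level of detail is comparable; the trade-off is that the paper's method generalises to the non-abelian integral computations it needs elsewhere, while yours is shorter and more elementary for characters.
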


\begin{proof} We only sketch a proof; the
  full details will appear in \cite{geesavitt}. The existence of
  $\varepsilon^{K}_{A}$ is easy, as one has only to write down the
  corresponding weakly admissible filtered module. Uniqueness up to
  unramified twist is clear, because a crystalline character all of
  whose Hodge-Tate weights are $0$ is automatically
  unramified. Finally, to compute the reduction modulo $p$, it
  suffices to treat the case where all but one element of $A$ is $0$,
  and the remaining element is $1$, as the general case then follows
  by taking a product of such characters. In this case (with some effort) one can
  construct a strongly divisible module corresponding to a lattice in
  $\varepsilon^{K}_A$, and  from the reduction mod $p$ of that
  strongly divisible module one determines
  $\overline{\varepsilon^{K}_{A}}|_{I_{K}}$ by applying \cite[Cor. 2.7]{sav06}.
\end{proof}

\begin{lem}\label{lem:existence of lifts in the tame niveau 1 case}Suppose that $$\rhobar|_{I_{F_{\p}}}\cong \left(\begin{matrix}\omega^{m+n+x} & 0\\0& \omega^{m+e-x}

\end{matrix}\right)$$with $1\leq x\leq e$, and that $\rhobar
|_{G_{F_{\p}}}$ itself is decomposable (which is automatic if
$\omega^{m+n+x}\neq\omega^{m+e-x}$). Then $\rhobar |_{G_{F_{\p}}}$ has
a parallel potentially Barsotti-Tate lift $\rho$ of type
$\widetilde{\omega}^{m+n}\oplus\widetilde{\omega}^{m}$. If $x\neq e$
then there is a non-ordinary such lift; if $x=e$ then there is a
non-ordinary
such lift provided that $n+e > p-1$, unless $e \le p-1$ and $n=p-1$.
\end{lem}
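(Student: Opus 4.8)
The plan is to construct the lift $\rho$ as a sum of two crystalline characters, and then to analyze when one may arrange for the resulting representation to be non-ordinary. Since $\rhobar|_{G_{F_\p}}$ is assumed decomposable, write $\rhobar|_{G_{F_\p}} \cong \psi_1 \oplus \psi_2$ with $\psi_i|_{I_{F_\p}} = \omega^{a_i}$, where $\{a_1,a_2\} = \{m+n+x, m+e-x\}$. For each choice of labeled Hodge-Tate weights $\{h_\tau\}_{\tau \in S_{F_\p}}$ with each $h_\tau \in \{0,1\}$, Lemma~\ref{lem:existence of crystalline chars} produces a crystalline character of $G_{F_\p}$ with those Hodge-Tate weights, unique up to unramified twist, whose restriction to $I_{F_\p}$ is $\omega^{\sum_\tau h_\tau}$ (here $S_k = \{\tau\}$ is a singleton since $F_\p$ is totally ramified, so all embeddings $\sigma \in S_{F_\p}$ restrict to the same $\tau$). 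First I would choose, for $i=1,2$, a subset $T_i \subseteq S_{F_\p}$ of size $x$ or $e-x$ as appropriate (so $|T_1| + |T_2| = e$ and $\{|T_1|,|T_2|\}$ matches $\{x, e-x\}$ up to the congruence $a_i \equiv |T_i| + (\text{const}) \pmod{p-1}$ forced by $m,n$), and take $\rho_i$ to be the crystalline character with $h_\tau = 1$ for $\tau \in T_i$ and $h_\tau = 0$ otherwise, twisted by an unramified character so that $\rho_i$ lifts $\psi_i$ and so that $\det(\rho_1 \oplus \rho_2) = \epsilon \widetilde\omega^{2m+n}$ times a finite order unramified character --- one checks this determinant condition is satisfiable because the total number of $1$'s is $e$. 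Then $\rho := \rho_1 \oplus \rho_2$ is crystalline with all labeled Hodge-Tate weights $\{0,1\}$ (this uses that $T_1, T_2$ are disjoint and cover $S_{F_\p}$), hence parallel potentially Barsotti-Tate of type $\widetilde\omega^{m+n} \oplus \widetilde\omega^m$ by Lemma~\ref{441}(3) and the description of the inertial local Langlands correspondence; and it lifts $\rhobar|_{G_{F_\p}}$ by construction.

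Next I would work out the ordinarity condition. By Definition~\ref{defn:ordinary}, $\rho = \rho_1 \oplus \rho_2$ is ordinary if and only if $\rho|_{I_{F_\p}}$ is an extension of a finite order character by (finite order)$\cdot\epsilon$; for a sum of two crystalline characters this happens exactly when one of $\rho_1, \rho_2$ is an unramified twist of $\epsilon$ and the other is unramified --- equivalently, when one of the Hodge-Tate-weight multisets $\{h_\tau\}_{\tau}$ is identically $1$ and the other identically $0$. Given the freedom to distribute the $1$'s among the $e$ embeddings subject only to $|T_i|$ lying in a prescribed residue class mod $p-1$ and summing to $e$, I would check: if $x \neq e$, then $|T_1|$ and $|T_2|$ are both nonzero (one equals $x$ or $e-x$ and the other the complement, and since $1 \le x \le e-1$ both are in $[1,e-1]$ --- modulo the caveat that the relevant residue-class representatives land in $(0,e)$, which is where $e \le p-1$ might impose a constraint), so neither $T_i$ is all of $S_{F_\p}$ nor empty, and $\rho$ is automatically non-ordinary. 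If $x = e$, then the natural choice has $|T_1| = e$ (or $0$) and $|T_2| = 0$ (or $e$), which is precisely the ordinary configuration; to get a non-ordinary lift one must instead realize $\{a_1, a_2\} = \{m+n+e, m\}$ with an alternative pair $(|T_1|, |T_2|)$ satisfying $|T_1| + |T_2| = e$, $0 < |T_1| < e$, and $|T_i|$ in the correct residue classes mod $p-1$. Here $|T_1| \equiv m+n+e \equiv m+n \pmod{p-1}$ must have a representative in $(0,e)$; when $e \ge p$ one may always take $|T_1| \in \{1, \dots, p-1\} \subseteq (0,e)$ with the right residue, and when $e \le p-1$ this forces $|T_1|$ to be the unique representative of $m+n \bmod (p-1)$ in $[1, p-1]$, which lies in $(0,e)$ precisely when $n + e > p-1$ (so that $m+n$ is "large enough" that its reduction exceeds... ) --- except that when $n = p-1$ and $e \le p-1$, the representative forced for $|T_2| \equiv m \pmod{p-1}$ collides with $0$ or $e$, which is the stated exceptional case.

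The main obstacle I expect is the careful bookkeeping in the last step: precisely pinning down, in terms of $m, n, e, x$, which pairs $(|T_1|,|T_2|)$ of nonnegative integers summing to $e$ are actually achievable (the constraint being that $|T_i|$ must reduce mod $p-1$ to a fixed class determined by $a_i$, and that the representative must lie in the correct range $[0,e]$), and then determining exactly when at least one achievable pair has both entries in $(0,e)$. This is where the hypotheses "$n + e > p-1$" and "not ($e \le p-1$ and $n = p-1$)" must emerge, and getting the edge cases right (e.g. when $\omega^{m+n+x} = \omega^{m+e-x}$, where the two characters coincide on inertia and one must be slightly more careful about what "decomposable" buys us, and about twisting by unramified characters to separate the two summands) will require attention. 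I would also need to double-check that the unramified twists can be chosen to make $\rho$ lift $\rhobar|_{G_{F_\p}}$ on the nose (not just on inertia) and simultaneously satisfy the determinant condition of Definition~\ref{defn:barsotti tate lifts}; this should follow from Lemma~\ref{lem:existence of crystalline chars}'s uniqueness-up-to-unramified-twist clause together with a dimension count on unramified characters, but it is worth verifying that the two requirements are compatible.
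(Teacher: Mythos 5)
Your overall strategy --- realize the lift as a direct sum of two characters built from the crystalline characters of Lemma~\ref{lem:existence of crystalline chars} with a prescribed distribution of Hodge--Tate weights $0,1$ among the $e$ embeddings, and detect non-ordinarity by asking that neither distribution be identically $0$ nor identically $1$ --- is the same as the paper's. But there is a genuine gap in the execution: you omit the tamely ramified finite-order (Teichm\"uller) twists, and this breaks the argument at two points. First, your $\rho=\rho_1\oplus\rho_2$ is crystalline (each $\rho_i$ being an unramified twist of a crystalline character), and a crystalline representation has unramified associated Weil--Deligne representation, hence inertial type $1\oplus 1$, not $\widetilde{\omega}^{m+n}\oplus\widetilde{\omega}^{m}$; citing Lemma~\ref{441}(3) cannot repair this, since the type is a property of the lift, not of the residual weight. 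Second, your congruence bookkeeping is then inconsistent: if $\rho_i$ is crystalline-times-unramified and lifts $\psi_i$, one needs $|T_i|\equiv a_i \pmod{p-1}$, and together with $|T_1|+|T_2|=e$ this forces $2m+n\equiv 0 \pmod{p-1}$, which has no reason to hold; correspondingly the residue classes in your final case analysis ($|T_1|\equiv m+n$, $|T_2|\equiv m$) wrongly involve $m$.

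The paper instead takes $\rho$ to be an unramified twist of $\widetilde{\omega}^{m+n}\varepsilon^{F_\p}_{A}$ plus an unramified twist of $\widetilde{\omega}^{m}\varepsilon^{F_\p}_{B}$, with $A$ having exactly $x$ entries equal to $1$ and $B=\{1-a_\tau\}$: the Teichm\"uller factors produce the type $\widetilde{\omega}^{m+n}\oplus\widetilde{\omega}^{m}$ (the crystalline factors contribute nothing to the type), the reduction on inertia is $\omega^{m+n+x}\oplus\omega^{m+e-x}$ as required, and ordinarity occurs exactly when $A$ or $B$ is constant, i.e.\ when $x=e$. In this corrected set-up the admissible residue classes for the number of $1$'s in $A$ are $x$ (pairing $\widetilde{\omega}^{m+n}$ with the summand reducing to $\omega^{m+n+x}$) or $n+x$ (the opposite pairing), independent of $m$; when $x=e$ a non-ordinary lift is obtained either by using $e-(p-1)$ ones (possible when $e>p-1$, using $\omega^{p-1}=1$) or by the opposite pairing with $n+e-(p-1)$ ones (possible when $e\le p-1$, $n+e>p-1$, $n\neq p-1$), which is precisely where the exceptional hypotheses of the lemma come from. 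Your non-ordinarity criterion is right in spirit, but as written the proposal cannot produce a lift of the stated type, so the argument does not go through without inserting the tame twists.
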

\begin{proof}Let $A=\{a_{\tau}\}_{\tau\in S_{{F_\p}}}$ have exactly $x$
  elements equal to $1$, and the remaining $e-x$ elements equal to
  $0$. Let $B=\{1-a_{\tau}\}_{\tau\in S_{{F_\p}}}$. Then by Lemma
  \ref{lem:existence of crystalline chars} we may take $\rho$ to be
  given by an unramified twist of
  $\widetilde{\omega}^{m+n}\varepsilon^{{F_\p}}_{A}$ plus an unramified
  twist of $\widetilde{\omega}^{m}\varepsilon^{{F_\p}}_{B}$ (with the
  unramified twists chosen so that this is indeed a lift of
  $\rhobar$, and also so that $\rho$ is parallel). This lift is ordinary precisely if one of
  $\varepsilon^{{F_\p}}_{A}$ or $\varepsilon^{{F_\p}}_{B}$ is an unramified
  twist of the cyclotomic character, which occurs if and only if $x=e$.

Now suppose that $x=e$.  If $e > p-1$, then because $\omega^{p-1}=1$
we may instead
take $A$ to have exactly $x-(p-1)$ elements equal to $1$, and the rest
equal to $0$, and produce a non-ordinary lift with $B$ and $\rho$
defined as above.  If $e \le p-1$ but $n+e > p-1$,  we take $A$ to have
exactly
$n+e-(p-1)$ elements equal to $1$ and the rest equal to zero.  Set $B = \{1-a_{\tau}\}_{\tau \in
  S_{F_\p}}$
and take $\rho$ to be given by an unramified twist of $\om^{m}
\varepsilon_{A}^{{F_\p}}$ plus an unramified twist of $\om^{m+n} \varepsilon_{B}^{{F_\p}}$.
This is non-ordinary provided that $n \neq p-1$.
\end{proof}

\subsection{The niveau 2 case: some strongly divisible modules}\label{sec:the lifting result in the
  residually irred case}

In the remainder of the section we wish to prove the following.

\begin{lem}\label{lem:existence of lifts in the tame niveau 2 case} Suppose that
$\rhobar|_{G_{F_{\p}}}$ is irreducible and
$\rhobar|_{I_{F_{\p}}}\cong\omega_{\sigma_{1}}^{m+n+x}\omega_{\sigma_{2}}^{m+e-x}\oplus\omega_{\sigma_{1}}^{m+e-x}\omega_{\sigma_{2}}^{m+n+x}$
for some $1\leq x\leq e$. Then $\rhobar|_{G_{F_{\p}}}$ has a parallel potentially Barsotti-Tate lift of type $\widetilde{\omega}^{m+n}\oplus\widetilde{\omega}^{m}$.

\end{lem}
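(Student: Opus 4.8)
The plan is to prove the lemma by exhibiting an explicit strongly divisible $\bigO_E$-module with descent data whose associated Galois representation is a parallel potentially Barsotti–Tate lift of $\rhobar|_{G_{F_\p}}$ of type $\widetilde{\omega}^{m+n}\oplus\widetilde{\omega}^{m}$; this construction occupies the remainder of section \ref{sec:the lifting result in the residually irred case} and is completed in section \ref{sec:niveau_two_conclusion}. Although the target type is tame of level one, so that any lift of this type becomes crystalline over a totally tamely ramified extension of $F_\p$, in order to have room to realise the niveau-two residual representation on descent data I would work over $K$ the splitting field of $u^{p^2-1}-\pi_\p$ over $F_\p$, exactly as in section \ref{sec: necessity}, so that (in the notation of section \ref{sec:breuil-modules-with}) $L=F_\p$, $e(K/L)=p^2-1$, the residue field $k_2$ of $K$ is $\F_{p^2}$, and $e'=e(p^2-1)$; I enlarge $E$ so that $\F_{p^2}\hookrightarrow k_E$. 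Twisting the eventual module by the rank-one object of Lemma \ref{lem:connected_breuil_module} realising an unramified twist of $\widetilde{\omega}^m$ (which shifts the type by $\widetilde{\omega}^m$ and the residual representation by $\omega^m$), I may assume $m=0$, so the target type is $\widetilde{\omega}^{n}\oplus 1$ and $\rhobar|_{I_{F_\p}}\cong\omega_{\sigma_1}^{n+x}\omega_{\sigma_2}^{e-x}\oplus\omega_{\sigma_1}^{e-x}\omega_{\sigma_2}^{n+x}$.

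The heart of the matter is to write down, over the ring $S=S_{K,\bigO_E}$, a free rank-two strongly divisible $\bigO_E$-module $\M$ with descent data: pick a basis $\mathbf{e}_1,\mathbf{e}_2$, set $N=0$, prescribe $\Fil^1\M$, give $\phi_1$ by an explicit (essentially anti-diagonal) $2\times 2$ matrix over $S$, and give the descent data $\widehat g$ for $g\in\Gal(K/F_\p)$ by an explicit monomial action with prescribed eigencharacters on $\mathbf{e}_1,\mathbf{e}_2$. The off-diagonal shape of $\phi_1$ is what prevents there being a $\phi_1$-stable line supporting $\Fil^1$ (so the generic fibre will be irreducible), while $\phi_1^2$ carries the niveau-two structure; one then tunes the $u$-adic valuations of the entries of $\phi_1$, as functions of $n$, $e$ and the parameter $x\in[1,e]$, so that simultaneously: (i) the rational object $\M[1/p]$ determines a weakly admissible filtered $(\varphi,N=0)$-module $D$ with descent data, all of whose labelled Hodge–Tate weights are $\{0,1\}$, so that the resulting representation $\rho$ is potentially crystalline with Hodge–Tate weights in $\{0,1\}$, i.e. parallel potentially Barsotti–Tate, and $T^{F_\p}_{\st,2}(\M)$ is a Galois-stable lattice in $\rho$; (ii) the descent data on $D$ exhibits the inertial type $\widetilde{\omega}^{n}\oplus 1$; and (iii) the mod-$\m_E$ reduction $\M/\m_E\M\in\BrMod_{\dd,F_\p}$ has generic fibre, computed via $T^{F_\p}_{\st,2}$, isomorphic to the given irreducible $\rhobar|_{G_{F_\p}}$. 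Granting the computations in (i)--(iii), the axioms of \cite[Def. 4.1]{sav04} for $\M$ (integrality of $\phi_1$ on $\Fil^1$ with image generating $\M$, the descent-data compatibilities of section \ref{sec:breuil-modules-with}, weak admissibility of $D$) are checked by direct computation, and \cite[Prop. 4.13, Cor. 4.12]{sav04} then give that $T^{F_\p}_{\st,2}(\M)$ reduces mod $p$ to $T^{F_\p}_{\st,2}(\M/\m_E\M)=\rhobar|_{G_{F_\p}}$, completing the proof.

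For step (iii) I would compute $\M/\m_E\M$ by producing explicit morphisms of Breuil modules relating it to rank-one modules of the kind appearing in Lemma \ref{lem:connected_breuil_module}, checking by valuation bookkeeping that the relevant kernels and cokernels contain no free $k_2[u]/u^{e'p}$-submodule, and invoking \cite[Prop. 8.3]{SavittCompositio} to promote these to isomorphisms on generic fibres; combined with \cite{sav06} and \cite{geesavittquaternionalgebras} this both establishes irreducibility of the generic fibre and identifies its restriction to $I_{F_\p}$ with the prescribed niveau-two character. (Note in passing that, since $\rhobar|_{G_{F_\p}}$ is irreducible, the lift $\rho$ is automatically non-ordinary, so this lemma feeds directly into the weight-cycling argument of section \ref{sec: weight cycling}.)

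The main obstacle is the explicit construction itself — most delicately, the choice of the exact $u$-power in the off-diagonal entry of $\phi_1$ as a function of $x$ — which must be made so that the single module $\M$ reduces mod $p$ to precisely the given irreducible $\rhobar|_{G_{F_\p}}$ for \emph{every} $1\le x\le e$, while keeping $D$ weakly admissible with parallel Hodge–Tate weights $\{0,1\}$ and of type $\widetilde{\omega}^{n}\oplus 1$. Unlike the niveau-one case of section \ref{sec:niveau-1-case}, where it suffices to assemble a reducible lift from the crystalline characters of Lemma \ref{lem:existence of crystalline chars}, here the lift is genuinely irreducible and does not decompose, so one is forced into a hands-on analysis of a rank-two strongly divisible module; carrying out the mod-$p$ reduction uniformly in $x$ is the crux, and is what section \ref{sec:niveau_two_conclusion} is devoted to.
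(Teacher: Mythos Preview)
Your overall strategy matches the paper's: reduce to $m=0$ by twisting, work over the splitting field of $u^{p^2-1}-\pi_\p$, build an explicit rank-two strongly divisible $\bigO_E$-module with descent data whose $\phi_1$ is anti-diagonal modulo $\m_E$, read off the inertial type $\widetilde{\omega}^n\oplus 1$ and the parallel Hodge--Tate weights from the associated filtered module, and identify the mod-$p$ generic fibre via maps to rank-one Breuil modules together with \cite[Prop.~8.3]{SavittCompositio}. The paper indexes the module by $j=n+(p-1)(e-x)\in[0,e(p-1)]$, which is exactly your ``$u$-power as a function of $x$''.

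Two technical points in your sketch would fail as written and are precisely where the paper's work lies. First, you cannot ``set $N=0$'' on a strongly divisible module in the sense of \cite[Def.~4.1]{sav04}: the Leibniz rule $N(sm)=N(s)m+sN(m)$ with $N(u)=-u$ on $S$ forces $N\neq 0$ on any module with nontrivial $u$-action. The paper instead writes down $(\M,\Fil^1\M,\phi)$ explicitly---crucially using auxiliary elements $x_1,x_2\in\m_E$ with $x_1x_2=p$ (so $E/\Qp$ must be ramified) to make $\Fil^1\M$ genuinely integral while collapsing to the desired anti-diagonal shape mod $\m_E$---and then invokes Breuil's uniqueness result \cite[Prop.~5.1.3(1)]{bre00} to obtain the unique compatible $N$, checking afterwards that it commutes with the $\bigO_E$-action and descent data by uniqueness. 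Second, the rank-one Breuil modules $\M(\overline{\chi})$ realising the niveau-two characters $\omega_{\sigma_i}^{j+e}$ only exist with descent data to the unramified quadratic extension $L$ of $F_\p$ inside $K_2$, not to $F_\p$ itself; so the paper computes $\rhobar_j|_{G_L}$ rather than $\rhobar_j|_{G_{F_\p}}$, and then uses irreducibility of $\rhobar|_{G_{F_\p}}$ to conclude that $\rhobar$ and $\rhobar_j$ agree up to unramified twist. Your plan to compare directly over $F_\p$ would not go through without this detour.
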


We prepare for the proof of Lemma~\ref{lem:existence of lifts in
  the tame niveau 2 case} by constructing certain strongly divisible modules.

Let $K$ be a totally ramified finite extension of $\Qp$ with
ramification index $e$ and residue field $k$, and
fix a uniformiser $\pi$ of $K$.  Let $K_2$ be the splitting field
of $u^{p^2-1} - \pi$, and write $e_2 =
(p^2-1)e$ and $k_2$ respectively
for the ramification index and residue field of $K_2$.  Similarly
write $e_1 = (p-1)e$.  Choose $\varpi \in K_2$ a uniformiser
with $\varpi^{p^2-1} = \pi$.  Let
$E(u)$ be an Eisenstein polynomial for $\varpi$, and write $E(u) = u^{e_2}
+ pF(u)$, so that $F(u)$ is a polynomial in $u^{p^2-1}$ over $W(k)$ whose constant
term is a unit.

If $g \in G_K$ write $\omt(g) = g\varpi/\varpi \in \mu_{p^2-1}(K_2)$
and write $\omega_2(g)$ for the image of $\omt(g)$ in
$k_2$.  Then $\om = \omt^{p+1} |_{I_K}$ and $\omega = \omega_2^{p+1}
|_{I_K}$,
but we will abuse notation and also write $\om, \omega$ for
$\omt^{p+1}, \omega_2^{p+1}$ respectively.   These characters may all equally well be regarded as functions on $\Gal(K_2/K)$.
Note that if $\sigma : k_2 \hookrightarrow \Fpbar$ then in the
notation of previous sections we have $\omega_{\sigma} = \sigma \circ
\omega_2 |_{I_{K_2}}$.

Let $E$ denote the coefficient field for our representations, with
integer
ring $\OO_E$ and maximal ideal $\m_E$.  Assume that $E$ is ramified
over $\Qp$
and
that $W(k_2)$ embeds into
$E$.  Let $k_E$ denote the residue field of $E$; to simplify notation
we fix an identification of $k_E$ with a subfield of $\Fpbar$.   Write $S = S_{K_2,\OO_E}$
(notation as in \cite[Sec. 4]{sav04}).  Recall that $\phi : S
\rightarrow S$ is the $W(k_2)$-semilinear, $\OO_E$-linear map sending $u
\mapsto u^{p}$.  The group $\Gal(K_2/K)$ acts $W(k_2)$-semilinearly on $S$
via $g \cdot u = (\omt(g) \otimes 1)u$.
Set $c = \frac{1}{p} \phi(E(u))\in S^{\times}$.

\begin{thm} \label{thm:strdiv}  Let $0 \le j \le e_1$ be an integer
  and set $J = (p+1)j$.
 There exists a strongly divisible $\OO_E$-module $\M = \M_j$ with tame descent data
from $K_2$ to $K$ and generators
 $g_1,g_2$ such that $\overline{\M} = \M/\m_E \M$ has the form
$$ \Fil^1 \M = \langle u^J \overline{g}_1 \ , \ u^{e_2-J}
\overline{g}_2 \rangle, $$
$$ \phi_1(u^J \overline{g}_1) = \overline{g}_2, \qquad
\phi_1(u^{e_2-J} \overline{g}_2) = \overline{g}_1 ,$$
$$ \widehat{g}(\overline{g}_1) = \overline{g}_1, \qquad
\widehat{g}(\overline{g}_2) = \omega^{n}(g) \overline{g}_2 $$
for $g \in \Gal(K_2/K)$, where $n$ is the least nonnegative residue of
$j$ modulo $p-1$.
\end{thm}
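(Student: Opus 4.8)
The plan is to construct the strongly divisible $\OO_E$-module $\M_j$ by first writing down an explicit filtered $(\phi,N)$-module with descent data over $K_0 \otimes_{\Qp} E$ of the appropriate type, and then exhibiting inside $S \otimes_{W(k)} D$ an $S$-lattice satisfying the axioms of \cite[Def. 4.1]{sav04}. The candidate $D$ will be the rank-two module corresponding to the irreducible type $\widetilde{\omega}_{\sigma_1}^{J+1} \oplus \widetilde{\omega}_{\sigma_1}^{p(J+1)}$ (suitably normalized), with $N = 0$, with $\phi$ permuting the two basis vectors up to scalars involving powers of $p$ dictated by the $u^J$ and $u^{e_2 - J}$ appearing in $\Fil^1$, and with $\widehat{g}$ acting by $\omt^{\bullet}$. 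The filtration on $K_2 \otimes_{K_0} D$ will be a line in degree $1$ chosen so that weak admissibility holds (this is where the constraint $0 \le j \le e_1$, equivalently $0 \le J \le e_2$, enters — it guarantees the exponents stay in the legal range $[0, e_2]$ so that $u^J \M$, $u^{e_2 - J}\M \subseteq \Fil^1 \M \subseteq \M$ with $u^{e_2}\M \subseteq \Fil^1 \M$).

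First I would fix lifts $g_1, g_2 \in \M$ of the stated $\overline{g}_1, \overline{g}_2$ and define $\Fil^1 \M = \langle u^J g_1, u^{e_2 - J} g_2\rangle + u^{e_2}\M$, then define $\phi_1$ on these generators by $\phi_1(u^J g_1) = g_2$ and $\phi_1(u^{e_2-J}g_2) = g_1$ (possibly correcting by the unit $c$ to make $\phi = p\,\phi_1$ interact correctly with $E(u)$; the Eisenstein relation $E(u) = u^{e_2} + pF(u)$ and $c = \frac1p \phi(E(u))$ are exactly the tools for this bookkeeping). I would then define the descent data $\widehat{g}$ on $g_1$ by $\widehat g(g_1) = g_1$ and on $g_2$ by $\widehat g(g_2) = (\omt^{n}(g) \otimes 1) g_2$ for $g \in \Gal(K_2/K)$, where $n$ is the least nonnegative residue of $j \bmod (p-1)$; one checks the semilinearity axiom $\widehat g(a u^i m) = g(a)((g(\pi)/\pi)^i \otimes 1)u^i \widehat g(m)$ and compatibility with $\phi_1$ by a direct computation, using that $J = (p+1)j$ so that $u^J$ transforms under $g$ by $\omt(g)^J = \omega(g)^j$, and matching this against $\omega^n$ via $j \equiv n \pmod{p-1}$.

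The substantive verifications are: (i) that $\M$ is stable under $\phi_1$ on all of $\Fil^1 \M$, not merely on the displayed generators — this requires checking $\phi_1(u^{e_2}\M) \subseteq \M$, which follows from $\phi_1(u^{e_2} m) = c\,\phi(m)$ once one knows $\phi_1(E(u)m) = c\,\phi(m)$; (ii) that the images of $\phi_1$ generate $\M$ over $S$; (iii) strong divisibility, i.e. that $\Fil^1 \M$ is as large as required and $\M$ is $\phi$-stable in $S[1/p]\otimes D$; and (iv) that reducing mod $\m_E$ recovers precisely the Breuil module in the statement. Step (iii), checking weak admissibility of the underlying $D$ and that the lattice $\M$ genuinely lies in the category of strongly divisible modules (all the Newton/Hodge numerics line up, no denominators escape), is the main obstacle — it is a linear-algebra computation over $S[1/p] \otimes_{K_0 \otimes E} D$ that has to be done carefully, and it is presumably where the bulk of the author's proof (omitted from this excerpt) goes. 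Once $\M_j$ is in hand, reduction mod $\m_E$ is formal from \cite[Cor. 4.12]{sav04}, and the explicit formulas for $\overline{\M}$ follow by specializing the above constructions, noting that mod $p$ the unit $c$ becomes a unit in $k_E^{\times}$ that can be absorbed (or tracked) into the $\phi_1$ formulas to get exactly $\phi_1(u^J \overline g_1) = \overline g_2$, $\phi_1(u^{e_2 - J}\overline g_2) = \overline g_1$.
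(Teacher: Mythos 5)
There is a genuine gap at the heart of your construction. You propose to lift the stated Breuil module naively, taking $\Fil^1 \M = \langle u^J g_1, u^{e_2-J} g_2 \rangle + (\Fil^1 S)\M$ with $\phi_1(u^J g_1) = g_2$ and $\phi_1(u^{e_2-J} g_2) = g_1$. No such strongly divisible module exists: since $\phi$ is $u \mapsto u^p$-semilinear on the free module $\M = Sg_1 \oplus Sg_2$, the requirement $\phi(u^J g_1) = p g_2$ forces $u^{pJ}\phi(g_1) = p g_2$, i.e.\ $\phi(g_1) = p u^{-pJ} g_2$, which does not lie in $\M$ (one cannot divide $p$ by a power of $u$ in $S$). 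The paper's key idea, which your proposal is missing, is to perturb the filtration generators in characteristic zero: choose $x_1, x_2 \in \m_E$ with $x_1 x_2 = p$ (this is where the hypothesis that $E/\Qp$ is ramified is used) and set $h_1 = u^J g_1 + x_1 g_2$, $h_2 = -x_2 F(u) g_1 + u^{e_2-J} g_2$. Then $\phi(g_1) = c^{-1}(u^{p(e_2-J)} g_2 - x_1 g_1)$ and $\phi(g_2) = c^{-1}(x_2\phi(F(u)) g_2 + u^{pJ} g_1)$ are honest elements of $\M$ and one computes $\phi_1(h_1) = g_2$, $\phi_1(h_2) = g_1$ using $x_1 x_2 = p$ and $E(u) = u^{e_2} + pF(u)$; since $x_1, x_2 \equiv 0 \bmod \m_E$, the reduction recovers exactly the monomial filtration in the statement. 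Your parenthetical hope that ``correcting by the unit $c$'' handles the interaction with $E(u)$ does not address this $p$-versus-$u$ divisibility obstruction.

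Two further points. First, your identification of the underlying filtered module as having the cuspidal type $\om_{\sigma_1}^{J+1} \oplus \om_{\sigma_1}^{p(J+1)}$ is incorrect (and inconsistent with the descent data you then write down): the lift constructed here has niveau-one principal series type $\om^{n} \oplus 1$, with $\widehat{g}(g_1) = g_1$, $\widehat{g}(g_2) = \om^n(g) g_2$; it is only the mod $p$ reduction that is of niveau two. Second, you do not address the monodromy operator $N$ nor the verification that $\Fil^1\M \cap I\M = I\Fil^1\M$ for ideals $I \subset \OO_E$; the paper handles the former by invoking Breuil's existence and uniqueness result for $N$ on strongly divisible $\Zp$-modules and using uniqueness to force $N$ to commute with the $\OO_E$-action and the descent data, and the latter by an explicit normal-form argument for coset representatives of $\Fil^1\M/(\Fil^1 S)\M$. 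Also note that, contrary to your guess, no separate weak admissibility check appears: once the strongly divisible module is constructed, the associated potentially Barsotti--Tate representation comes for free from the functor $T_{\st,2}^{K}$.
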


\begin{proof}
Let $\M$ be the
free $S$-module generated by $g_1,g_2$.  To give $\M$ the structure of
a strongly divisible module with descent data and coefficients, we
must specify $\Fil^1 \M$, $\phi$, $N$, and descent data and check
that these satisfy the conditions (1)-(12) of \cite[Def. 4.1]{sav04}.
This is what we now do.

Choose any $x_1,x_2 \in \m_E$ with $x_1 x_2 = p$ (this is where we use
the hypothesis that $E/\Qp$ is ramified)  and let $\Fil^1 \M$ be the
submodule of $\M$ generated by $$h_1 := u^J g_1 + x_1 g_2, \qquad h_2
:= -x_2 F(u) g_1 +
u^{e_2-J} g_2, \qquad (\Fil^1 S)\M.$$
We would like to define a map $\phi : \M \rightarrow \M$, semilinear
with respect to $\phi$ on $S$, so that $\phi_1 =
\frac{1}{p} \phi |_{\Fil^1 \M}$ is well-defined and satisfies
\begin{gather}
\label{phi-h1} \phi_1(u^J g_1 + x_1 g_2) = g_2 \\
\label{phi-h2} \phi_1(-x_2 F(u) g_1 + u^{e_2-J} g_2) = g_1.
\end{gather}
This entails $\phi_1(E(u) g_1) = \phi_1(u^{e_2-J} h_1 - x_1 h_2) =
u^{p(e_2-J)} g_2 - x_1 g_1$, suggesting that we should define
$$ \phi(g_1) =  c^{-1}(u^{p(e_2-J)} g_2 - x_1 g_1) $$
and similarly
$$ \phi(g_2) = c^{-1} (x_2 \phi(F(u)) g_2 + u^{pJ} g_1). $$
Extending this map $\phi$-semilinearly to all of $\M$, one checks that
equations \eqref{phi-h1} and \eqref{phi-h2} hold, so that $\phi(\Fil^1 \M)$ is contained in $p\M$
and generates it over $S$.

We will now check that this choice of $\Fil^1 \M$ satisfies condition (2) of \cite[Def. 4.1]{sav04}. Recall from  the discussion
 preceding \cite[Def. 4.1]{sav04} that each element of $S$ can be written
uniquely as $\sum_{i \ge 0} r_i(u) E(u)^{i} / i! $ where each
$r_i$ is a polynomial of degree less than $e_2$, and such an element
lies in $\Fil^1 S$ if and only if $r_0 = 0$.  We claim that each
coset in $\Fil^1 \M/(\Fil^1 S) \M$ has a representative $a h_1 + b h_2$ with
$a,b$
polynomials of degree less than
$e_2-J$ and $J$ respectively.  Indeed, given a coset $A h_1 + B h_2 +
(\Fil^1 S)\M$ with $A,B \in S$, we can alter the coset representative
as follows: write $B$ as the sum of a polynomial of degree less than
$e_2$ and an element of $\Fil^1 S$, and absorb $h_2$ times the latter into $(\Fil^1 S)
\M$; use the relation $u^J h_2 = E(u) g_2 - x_2F(u) h_1$ to eliminate
the terms in $B$ of degree at least $J$ (thus altering the coefficient of $h_1$); write the new coefficient of
$h_1$ as the sum of a polynomial of degree less than
$e_2$ and an element of $\Fil^1 S$, and absorb the latter into
$(\Fil^1 S) \M$; finally, use the relation $u^{e_2-J} h_1 = E(u) g_1 +
x_1 h_2$ to eliminate the terms of degree at least $e_2-J$ in the
coefficient
of $h_1$, noting that in this last step one does not re-introduce
terms of degree at least $u^J$ into the coefficient of $h_2$.

If $I$ is any ideal of $\OO_E$, we wish to see that $\Fil^1 \M \cap
I\M = I\Fil^1 \M$.   We have seen that an arbitrary element $m$ of $\Fil^1
\M$ has the form $m = a h_1 + b h_2 + s_1 g_1 + s_2 g_2$ with $s_1,s_2 \in
\Fil^1 S$ and $a,b$ polynomials of degree  less than
$e_2-J$ and $J$ respectively.  Suppose such an element lies in $I
\M$.   The coefficient of $g_2$ for this element is $x_1 a + u^{e_2-J}
b + s_2$.  This must lie in $IS$; but because an element $\sum_{i}
r_i(u) E(u)^i/i!$ (with $\deg(r_i) < e_2$ for all $i$) lies in $IS$ if
and only if all the coefficents of the polynomials $r_i$ lie in
$W(k_2) \otimes I$,
and because $x_1 a$, $u^{e_2-J} b$ have no terms in common of the same
degree, it
follows that that $s_2 \in
I(\Fil^1 S)$ and the coefficients of $b$ lie in $W(k_2) \otimes I$.  Then
$ah_1 + s_1 g_1$ still lies in $I\M$, and now we can see that
$s_1 \in I(\Fil^1 S)$ and the coefficients of $a$ lie in $W(k_2)
\otimes I$.
We conclude that $m \in I (\Fil^1 \M)$, as desired.

Next we turn to descent data.  If $g \in \Gal(K_1/K)$, set
$\widehat{g}(g_1) = g_1$ and $\widehat{g}(g_2) = \om^n(g) g_2$,
and extend $\widehat{g}$ to $\M$ semilinearly with respect to
the usual action of $g$ on $S$.
One sees that $\widehat{g}$ preserves $\Fil^1 \M$ (remember that
$F(u)$ is a polynomial in $u^{p^2-1}$ over $W(k)$) and commutes with $\phi$.
So, summarizing all our work so far, we have shown that the
tuple $(\M, \Fil^1 \M, \phi, \{ \widehat{g} \})$
satisfies all the axioms of a
strongly divisible $\OO_E$-module with tame descent data
\cite[Def. 4.1]{sav04} other than the axioms involving
the monodromy operator $N$.

Ignoring the action of $\OO_E$ and the descent data and regarding
$(\M, \Fil^1 \M, \phi)$ simply as a strongly divisible $\Zp$-module
over $K_2$,
it follows from \cite[Prop 5.1.3(1)]{bre00} that there exists
a \emph{unique} $W(k_2) \otimes \Zp$-endomorphism $N : \M \rightarrow \M$ satisfying
axioms (5)--(8) of \cite[Def. 4.1]{sav04}, except that we have
axiom (5) only with respect to $s \in S_{K_2,\Zp}$ until we know that
$N$ commutes with the action of~$\OO_E$.  For the latter, if $z \in
\OO_E^{\times}$ we observe that $z N z^{-1}$ satisfies the same list
of
axioms that determines $N$ uniquely, so $z$ and $N$ commute;
since $\OO_E^{\times}$ generates $\OO_E$ as a $\Zp$-module we conclude
that $N$ is an $\OO_E$-endomorphism.  To conclude that
$\M$ (with its associated structures) is a strongly divisible
$\OO_E$-module, the only thing left is to
confirm the remainder of axiom (12),
that $N$ commutes with $\widehat{g}$ for each $g \in \Gal(K_2/K)$; for
this, use the same argument as in the previous sentence, this time applied to
$\widehat{g} N \widehat{g}^{-1}$.

That $\overline{\M} = \M/\m_E\M$
has the desired form is obvious. \end{proof}

Define $\rho_j = \Qp \otimes_{\Zp} T_{\st,2}^{K}(\M_j)$, the potentially Barsotti-Tate
Galois representation associated to $\M_j$.

\begin{prop}
  The representation $\rho_j$ has inertial type $\om^n
  \oplus 1$, and $\det(\rho_j)$ is the product of the cyclotomic character,
  a finite order character of order prime to $p$, and an unramified character.
\end{prop}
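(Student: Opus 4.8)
\emph{Inertial type.} The plan is to show that the inertial type of $\rho_j$ is simply the descent data attached to $\M_j$. Since $\M_j$ is a strongly divisible module with descent data from $K_2$ to $K$, the representation $\rho_j$ is potentially semistable and becomes semistable over the tamely ramified extension $K_2/K$; and in the covariant normalisation of $T_{\st,2}^{K}$ --- which, as explained in the excerpt, differs from the one of \cite{bcdt} only by a global duality and twist, and not in the identification of the descent data --- the action of $\Gal(K_2/K)$ on the filtered $(\phi,N)$-module underlying $\rho_j$ is the descent data constructed in the proof of Theorem~\ref{thm:strdiv}, namely $\widehat{g}(g_1) = g_1$, $\widehat{g}(g_2) = \om^n(g)g_2$. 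The inertial type is by definition the restriction to $I_K$ of the Weil--Deligne representation of $\rho_j$, which (because $\rho_j$ becomes semistable over the tame extension $K_2$) is exactly this $\Gal(K_2/K)$-action restricted to the inertia subgroup; since $\om|_{I_K}$ is the Teichm\"uller lift of the fundamental character, this gives $1\oplus\om^n\cong\om^n\oplus 1$. This is the same dictionary (tame type $\leftrightarrow$ descent data) already used in the proof of Lemma~\ref{lem:bounding weights: reducible case}.

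\emph{Determinant.} By construction $\rho_j$ is potentially semistable with all Hodge--Tate weights in $\{0,1\}$, and a short inspection of the shape of $\Fil^1\M_j$ in Theorem~\ref{thm:strdiv}, including the boundary cases $j = 0$ and $j = e_1$ in which one of the $\overline{g}_i$ lies in $\Fil^1$, shows that each pair of labelled Hodge--Tate weights of $\rho_j$ is exactly $\{0,1\}$. Hence $\det\rho_j = \epsilon\psi$ for a de Rham character $\psi$ all of whose Hodge--Tate weights vanish, and every such $\psi$ is the product of an unramified character and a finite order character. That the finite order factor may be taken of order prime to $p$ follows because $\psi|_{I_K}$ is the determinant of the inertial type of $\rho_j$, namely $\om^n$, which factors through the inertia subgroup of $\Gal(K_2/K)$; the latter is cyclic of order $p^2-1$ since $K_2/K$ is tame, and in fact $\om$ has order dividing $p-1$. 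Alternatively one computes directly that $\wedge^2\M_j$ is a rank one strongly divisible module with descent data $\om^n$ and $\phi(g_1\wedge g_2) = -pc^{-1}(g_1\wedge g_2)$ --- a one-line calculation from the formulas for $\phi(g_1)$ and $\phi(g_2)$ in Theorem~\ref{thm:strdiv} using $x_1x_2 = p$ and $pc = \phi(E(u))$ --- where $-c^{-1}$ is congruent modulo $u$ to a unit of $W(k_2)$; arguing as in the proof of Lemma~\ref{lem:connected_breuil_module}, and keeping track of the twist built into $T_{\st,2}^{K}$, one reads off that $\det\rho_j$ is the product of $\epsilon$, the Teichm\"uller lift of $\om^n$, and an unramified character.

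\emph{On the difficulty.} There is no genuine obstacle here; the assertion is essentially bookkeeping with machinery already in place. The two points that require care are keeping the $T_{\st,2}^{K}$ normalisations straight --- both that no spurious dual or twist corrupts the identification of the descent data with the inertial type, and that the twist by $\epsilon$ built into $T_{\st,2}^{K}$ is correctly accounted for when passing from $\M_j$ (or $\wedge^2\M_j$) to $\det\rho_j$ --- and confirming that the Hodge--Tate weights of $\rho_j$ are genuinely parallel $\{0,1\}$, rather than degenerating at some embedding, which is precisely what makes the factor $\epsilon$ appear exactly once in $\det\rho_j$.
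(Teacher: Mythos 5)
Your overall strategy is the same as the paper's: read the inertial type off the descent data, and get the determinant statement from the fact that every labeled pair of Hodge--Tate weights is $\{0,1\}$ together with tameness of $K_2/K$ (exactly the note following Definition~\ref{defn:barsotti tate lifts}). But the step you dismiss as bookkeeping is where the actual work lies, and as written it is a gap. Your assertion that ``the action of $\Gal(K_2/K)$ on the filtered $(\phi,N)$-module underlying $\rho_j$ is the descent data $\widehat{g}(g_1)=g_1$, $\widehat{g}(g_2)=\om^n(g)g_2$'' is not even meaningful literally: the filtered module $D=D_{\st,2}^{K_2}(\rho_j)$ is recovered inside $\M_j[1/p]$ as the kernel of $N$ (\cite[Lem.~3.13]{sav04}), and there is no reason for $g_1,g_2$ to be killed by $N$ --- $N$ is the unique operator supplied by \cite[Prop.~5.1.3(1)]{bre00}, not something chosen to vanish on the generators, and the descent data on $\M_j$ is only $S$-semilinear (it moves $u$). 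So one must actually exhibit a basis of $D$ on which inertia acts by $1$ and $\om^n$. The paper does this by applying \cite[Prop.~6.2.1.1]{BreuilGriffiths}: the projection of $\M_j[1/p]$ onto its fibre $M$ at $u=0$ admits a unique section $s$ compatible with $\phi$ and $N$; uniqueness forces $s$ to be $E$-linear and descent-data-equivariant, its image is $D$ (since $N=0$ on $M$), and $v_i=s(g_i)$ is the required basis. Without this, or an equivalent argument, the identification of the type with the descent data on $g_1,g_2$ is unproven.

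The same remark applies to the determinant. A ``short inspection of the shape of $\Fil^1\M_j$'' does not by itself give the labeled Hodge--Tate weights; one needs that $\Fil^1$ of $K_2\otimes_{K_0}D$ is the image of $\Fil^1\M_j[1/p]$ under the base change $u\mapsto\varpi$, which is again a theorem (\cite[Prop.~6.2.2.1]{BreuilGriffiths}). Granting it, the filtration is generated by $\varpi^Jf_\varpi(g_1)+x_1f_\varpi(g_2)$, with $f_\varpi(h_2)$ a scalar multiple of $f_\varpi(h_1)$ because $E(\varpi)=0$ and $x_1x_2=p$; since the coefficient $x_1$ is nonzero in every factor of $K_2\otimes_{\Qp}E$, the filtration is free of rank one and all labeled pairs are $\{0,1\}$ --- your boundary cases $j=0,e_1$ need no separate treatment. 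From there your deduction (de Rham character with vanishing Hodge--Tate weights is unramified times finite order, of order prime to $p$ by tameness) is fine and matches the paper. Your $\wedge^2\M_j$ alternative is a genuinely different route to the determinant, and the computation $\phi(g_1\wedge g_2)=-pc^{-1}(g_1\wedge g_2)$ is correct, but as sketched it leans on the same unproved compatibilities of $T_{\st,2}^{K}$, so it does not circumvent the issue; carried out carefully it would, though, also pin down the inertial character of $\det\rho_j$ directly.
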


\begin{proof}
Let $D$ denote the filtered module with descent data associated to
 $\rho_j$.
  We recall from the proof of \cite[Lem. 3.13]{sav04} that $D$ is equal
  to
the kernel of $N$ on $ \M_j[1/p]$.

Write $M = (W(k_2)[1/p] \otimes_{\Qp} E) \otimes_{S[1/p]} \M_j[1/p]$, the tensor product taken with respect
to the map $S[1/p] \rightarrow W(k_2)[1/p] \otimes_{\Qp} E$ sending
$u$ to $0$, and equip $M$ with the maps $N$ and
$\phi$ induced from $\M_j$ (so in particular $N=0$ on $M$), as
well as induced descent data $\widehat{g}$.  By
\cite[Prop. 6.2.1.1]{BreuilGriffiths} the canonical map $\M_j[1/p] \rightarrow M$ has a
unique $ W(k_2)[1/p]$-linear section $s : M \rightarrow \M_j[1/p]$ preserving $\phi$ and $N$;
then the same uniqueness argument as in the last paragraph of the proof of
Theorem \ref{thm:strdiv} shows that $s$ is an $E$-linear map and that
$s$ preserves descent data.

Recalling that $N=0$ on $M$,  we see that $D
= \im(s)$, so in particular $D$ has a $ W(k_2)[1/p]\otimes_{\Qp} E$-basis $v_1, v_2$ with $v_i =
s(g_i)$.  Since $s$ preserves descent data we have $\widehat{g} \cdot v_1 =
v_1 $ and $\widehat{g} \cdot v_2 = \om^n(g) v_2$.  The first part of
the proposition follows.

For the second part, if we regard $K_2 \otimes_{\Qp} E$ as an
$S[1/p]$-algebra via the map $u \mapsto \varpi$ then by
\cite[Prop. 6.2.2.1]{BreuilGriffiths} there is an isomorphism
$$f_{\varpi}: (K_2 \otimes_{\Qp} E) \otimes_{S[1/p]} \M_j[1/p] \cong D_{K_2} := {K_2}
\otimes_{W(k_2)[1/p]} D$$
that identifies the filtrations on both sides.  It follows that
$D_{K_2}$ is the free $(K_2 \otimes_{\Qp} E)$-module generated by
$f_{\varpi}(g_1)$ and $f_{\varpi}(g_2)$, and that $\Fil^1 D_{K_2}$ is
generated by $f_{\varpi}(h_1) = \varpi^J f_{\varpi}(g_1) + x_1
f_{\varpi}(g_2)$.  (One checks that $f_{\varpi}(h_1)$ and $f_{\varpi}(h_2)$
are scalar multiples of one another in $D_{K_2}$.)  In particular
$\Fil^1 D_{K_2}$ is a free submodule of rank one in $D_{K_2}$, and so all pairs of labeled
Hodge-Tate
weights of $\rho_j$ are $\{0,1\}$.  The claim follows from the note after Defintion~\ref{defn:barsotti tate lifts}, together
with the fact that $K_2/K$ is tamely ramified (so that the finite
order character has order prime to~$p$).
\end{proof}

\subsection{The niveau 2 case: conclusion of the
  proof} \label{sec:niveau_two_conclusion}
   We will now compute
$\rhobar_j = T_{\st,2}^{K}(\M_j/\m_E
\M_j)$, which by \cite[Prop
4.13]{sav04} is the reduction mod $p$ of
$\rho_j$.  More precisely we will compute $\rhobar_j |_{G_L}$ where
$L$ is the unramified quadratic extension of $K$ contained in
$K_2$.

As in Lemma~\ref{lem:connected_breuil_module}
let $\overline{\chi} : \Gal(K_2/L) \rightarrow k_E^{\times}$ be a
  character and let $\M(\overline{\chi})$ denote the rank one Breuil module
  with $k_E$-coefficients and descent data from $K_2$ to $L$ with
  generator $v$ and
$$ \Fil^1 \M(\overline{\chi}) =  {\M}(\overline{\chi}), \qquad
\phi_1(v) = v, \qquad \widehat{g}(v) = (1 \otimes
\overline{\chi}(g))v$$
for $g \in \Gal(K_2/L)$.  By Lemma~\ref{lem:connected_breuil_module}
we have
\begin{equation}
  \label{eq:tst_for_chars}
T_{\st,2}^L({\M}(\overline{\chi})) =
\overline{\chi}.
\end{equation}

Let $\overline{\M}^2_j$ denote the Breuil module $\overline{\M}_j =
\M_j/\m_E \M_j$ with its descent data restricted to $\Gal(K_2/L)$, so
that $T_{\st,2}^{L}(\overline{\M}^2_j) =
T_{\st,2}^{K}(\overline{\M}_j) |_{G_L}$.  We abuse notation and
also let $\omega_{\sigma}$ denote $\sigma \circ \omega_2 |_{G_L}$.

\begin{prop} \label{prop:reduction_of_mjbar} We have $\rhobar_j
  |_{G_L} = T_{\st,2}^{L}(\overline{\M}_j^2) \cong
  \omega_{\sigma_1}^{j+e} \oplus \omega_{\sigma_2}^{j+e}$.
\end{prop}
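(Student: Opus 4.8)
The plan is to find, inside the Breuil module $\overline{\M}_j^2$ over $L$, two rank-one Breuil submodules (or quotients) of the shape $\M(\overline{\chi})$ from equation \eqref{eq:tst_for_chars}, whose characters $\overline{\chi}$ are exactly $\omega_{\sigma_1}^{j+e}$ and $\omega_{\sigma_2}^{j+e}$, and then invoke the compatibility of $T_{\st,2}^L$ with subquotients. Concretely, $\overline{\M}_j$ has generators $\overline{g}_1,\overline{g}_2$ with $\Fil^1 = \langle u^J \overline{g}_1, u^{e_2-J}\overline{g}_2\rangle$ and $\phi_1(u^J\overline{g}_1)=\overline{g}_2$, $\phi_1(u^{e_2-J}\overline{g}_2)=\overline{g}_1$. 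Restricting the descent data from $\Gal(K_2/K)$ to $\Gal(K_2/L)$ changes nothing in these formulas, but now the residue field $k_2$ of $K_2$ is a \emph{quadratic} extension of the residue field $\ell$ of $L$, and the coefficient ring $(k_2 \otimes_{\Fp} k_E)$ decomposes as a product of two copies of $k_E$ indexed by the two embeddings $\sigma_1,\sigma_2$ of (the quadratic extension of) $\ell$ into $\Fpbar$. Thus $\overline{\M}_j^2$ itself splits as a direct sum of two rank-one pieces, one for each idempotent.

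The first step is therefore to write this idempotent decomposition explicitly: $\overline{\M}_j^2 = \overline{\M}_j^{2,\sigma_1} \oplus \overline{\M}_j^{2,\sigma_2}$, where on the $\sigma_i$-component $u^J\overline{g}_1$ still maps to $\overline{g}_2$ and $u^{e_2-J}\overline{g}_2$ still maps to $\overline{g}_1$, but the Frobenius $\phi$ on the coefficient ring permutes the two components (since $\phi$ acts as the arithmetic Frobenius on $k_2$, hence swaps $\sigma_1$ and $\sigma_2$). This means $\phi_1$ within the rank-two module $\overline{\M}_j^2$ has a "period-two" structure: composing the two maps, one gets an honest $\phi^2$-semilinear self-map on each rank-one component. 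The second step is to compute, on $\overline{\M}_j^{2,\sigma_1}$, the composite $\phi_1(u^{e_2-J}\cdot)\circ\phi_1(u^J\cdot)$ applied to $\overline{g}_1$, track the resulting power of $u$, put the module in the standard form of Theorem 3.5 of \cite{sav06} (i.e. $\Fil^1 = u^r\M$, $\phi_1(u^r w)=cw$, $\widehat{g}(w)=(\overline{\eta}(g)^\kappa\otimes 1)w$), and read off $\kappa$. The descent-data character on $\overline{g}_2$ is $\omega^n$ while on $\overline{g}_1$ it is trivial, so the descent character on the composite generator will be a specific power of $\omega_2$; combined with the exponent $r$ forced by the relation $r\equiv (p-1)\kappa \pmod{p^2-1}$, this pins down $\kappa$ modulo $p^2-1$, and then a map $\M(\omega_{\sigma_1}^{?}) \to \overline{\M}_j^{2,\sigma_1}$ of the kind used in Lemma \ref{lem:bounding weights: reducible case} (sending $v \mapsto u^{pr/(p-1)} w$, with kernel containing no free submodule, so inducing an isomorphism on generic fibres by \cite[Prop 8.3]{SavittCompositio}) identifies the generic fibre via \eqref{eq:tst_for_chars}.

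The main obstacle is the bookkeeping of exponents: one must correctly handle the fact that $e_2 = (p^2-1)e$, that $J=(p+1)j$, that $\widehat{g}$ twists $u^i$ by $(\om_2(g))^i$ (so the descent character picks up contributions both from the explicit $\omega^n$ on $\overline{g}_2$ and from the powers of $u$ appearing when one solves $\phi_1$), and that passing to the $\sigma_1$-component and composing $\phi_1$ twice introduces a factor of $p$ in the relevant exponent. The expected outcome is $\kappa \equiv (p+1)(j+e) \pmod{p^2-1}$ on each component up to the $\sigma_1 \leftrightarrow \sigma_2$ interchange, which under $\sigma \circ \omega_2$ yields precisely $\omega_{\sigma_1}^{j+e}$ and $\omega_{\sigma_2}^{j+e}$; one should double-check the normalisation against the known determinant (the determinant of $\rhobar_j$ restricted to $I_K$ should be $\omega^{2(j+e)}\cdot(\text{mod }p\text{ cyclotomic})$-free part, consistent with Hodge-Tate weights $\{0,1\}$ and the computed inertial type $\om^n\oplus 1$ from the preceding proposition, noting $n\equiv j \pmod{p-1}$ forces $2(j+e)\equiv n+e \pmod{p-1}$, i.e. $j+2e \equiv e \pmod{p-1}$, which holds since $p-1 \mid e_1 = (p-1)e$ — wait, rather one uses $n \equiv j$ directly). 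Once the exponent is confirmed on one component, the other follows by applying $\phi$ (equivalently, by symmetry of the construction under swapping $g_1 \leftrightarrow g_2$ together with $j \leftrightarrow e_1 - j$, after checking this swap is compatible), completing the proof.
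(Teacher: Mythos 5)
Your overall strategy---produce rank-one objects $\M(\overline{\chi})$ mapping to $\overline{\M}_j^2$, check the kernel contains no free submodule so that \cite[Prop 8.3]{SavittCompositio} applies, and read off the characters via \eqref{eq:tst_for_chars}---is the right one, but the specific route you propose has a structural gap. The idempotent pieces $\overline{\M}_j^{2,\sigma_i} = e_{\sigma_i}\overline{\M}_j^2$ are not Breuil modules: since $\phi_1$ is semilinear with respect to the Frobenius on $k_2$, it interchanges $e_{\sigma_1}$ and $e_{\sigma_2}$, so neither piece is $\phi_1$-stable (and each is free of rank two over $k_E[u]/u^{e_2p}$, not rank one). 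Consequently the standard form of Theorem 3.5 of \cite{sav06}, the functor $T_{\st,2}^{L}$, and \cite[Prop 8.3]{SavittCompositio} cannot be applied to $\overline{\M}_j^{2,\sigma_1}$, and a map of Breuil modules $\M(\omega_{\sigma_1}^{?})\to\overline{\M}_j^{2,\sigma_1}$, $v\mapsto u^{pr/(p-1)}w$ (a formula imported from the rank-one analysis over $F_\p$ in the reducible case), does not make sense; your ``honest $\phi^2$-semilinear self-map on each component'' would require a separate niveau-two formalism that none of the cited results supply. What actually works is to map rank-one modules into $\overline{\M}_j^2$ itself with generators that mix the idempotents \emph{across the two generators}: $v\mapsto u^{p(j+e)}e_{\sigma_\alpha}\overline{g}_1 + u^{p(pe-j)}e_{\sigma_\beta}\overline{g}_2$ with $\alpha\neq\beta$. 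Precisely because $\phi_1$ swaps the idempotents, such an element is fixed by $\phi_1$, and compatibility with the descent data then forces $\overline{\chi}=\omega_{\sigma_\alpha}^{p(j+e)}=\omega_{\sigma_\beta}^{j+e}$; taking both choices of $\beta$ and noting that $p(j+e)$ and $p(pe-j)$ are both less than $pe_2$ (so the kernel contains no free $k_2[u]/u^{e_2p}$-submodule) completes the argument.

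Relatedly, your ``expected outcome'' $\kappa\equiv(p+1)(j+e)\pmod{p^2-1}$ cannot be what comes out: $\omega_2^{(p+1)(j+e)}=\omega^{j+e}$ is a niveau-one character, whereas the answer $\omega_{\sigma_i}^{j+e}$ is in general genuinely of niveau two. The exponent $j+e$ arises from the interplay between the $u$-powers $u^{p(j+e)}$, $u^{p(pe-j)}$ in the mixed generator and the character $\omega^{n}$ on $\overline{g}_2$ (using $n\equiv j\pmod{p-1}$), not from a $\phi^2$-computation on an idempotent component. Since this exponent bookkeeping is exactly the content of the proposition and none of it is carried out (the proposal ends with an unverified guess and an inconclusive consistency check), the argument as written is not yet a proof, though its guiding intuition about the period-two structure of $\phi_1$ is sound.
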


\begin{proof}
Let $e_{\sigma_1}, e_{\sigma_2} \in k_2 \otimes k_E$ denote the
idempotents corresponding to the embeddings $\sigma_1,\sigma_2 : k_2
\hookrightarrow k_E$, so that $e_{\sigma_i}(a \otimes 1) =
e_{\sigma_i}(1 \otimes \sigma_i(a))$.  Suppose $\{\alpha, \beta\} =
\{1,2\}$.
If one ignores descent data, one checks that there is a map
$f_{\beta} : \M(\overline{\chi}) \rightarrow \overline{\M}_j^2 $
obtained by sending
$$ v \mapsto u^{p(j+e)} e_{\sigma_{\alpha}}\overline{g}_1 + u^{p(pe-j)}
e_{\sigma_{\beta}} \overline{g}_2 .$$
 In order that this map be compatible with descent data, one checks that it is
necessary and sufficient that $e_{\sigma_{\alpha}}(1 \otimes
\overline{\chi}) = e_{\sigma_{\alpha}}(\omega_2^{p(j+e)}\otimes 1)$,
i.e., $\overline{\chi} = \omega_{\sigma_{\alpha}}^{p(j+e)} = \omega_{\sigma_{\beta}}^{j+e}$.
We therefore have a map
$$ f_1 \oplus f_2 : \M(\omega_{\sigma_1}^{j+e}) \oplus \M(\omega_{\sigma_2}^{j+e})
\rightarrow \overline{\M}_j^2.$$
Note that $\ker(f_1 \oplus f_2)$ does not contain any free
$k_2[u]/u^{e_2p}$-submodules (this amounts to the fact that $p(j+e)$
and $p(pe-j)$ are both smaller than $pe_2$);
by \cite[Prop
8.3]{SavittCompositio}
we deduce that $f_1 \oplus f_2$ induces an isomorphism on
generic fibres, and the proposition follows from \eqref{eq:tst_for_chars}.
\end{proof}

Finally we have the following.

\begin{proof}[Proof of Lemma~\ref{lem:existence of lifts in the tame niveau 2 case}]
Take $K = F_{\p}$ in the discussion of this and the previous subsection. Twisting by a suitable power of the (Teichm\"{u}ller lift of) a
 fundamental  character of level one, we may
 assume $m=0$.   Setting $j = n + (p-1)(e-x)$, one checks that
$\omega_{\sigma_1}^{j+e} = \omega_{\sigma_1}^{n+x}
\omega_{\sigma_2}^{e-x}$
and similarly for $\omega_{\sigma_2}^{j+e}$.  It follows from
Proposition \ref{prop:reduction_of_mjbar} that $\rhobar$ is an
unramified twist of $\rhobar_j$.   Note that a twist of $\rho_j$ by a
suitable unramified character will be parallel; twisting this by a
suitable finite-order
unramified character of order prime to $p$  gives a parallel
potentially Barsotti-Tate lift
of $\rhobar$ that is an unramified twist of~$\rho_j$.  Since $\rho_j$ has
type $\om^{n} \oplus 1$ we are done.
\end{proof}

\begin{rem}
  The reader may find it unnatural that although $\rho_j$ becomes
Barsotti-Tate over $K_1 = K(\pi^{1/(p-1)})$, we instead work with a
strongly divisible module over $K_2$ for $\rho_j$ (because our method
for computing $\rhobar_j |_{G_L}$ requires it).  One can certainly
write down the strongly divisible module over $K_1$ instead (just
replace $J$ and $e_2$ with $j$ and $e_1$ throughout the construction
of $\M_j$), whose reduction mod $p$ corresponds to a group scheme $\G$
over $\OO_{K_1}$ with generic fibre descent data from $K_1$ to $K$.
One can then hope to show directly, by extending the methods of
\cite[Sec. 5.4]{bcdt}, that $\G \times_{\OO_{K_1}} \OO_{K_2}$
 (with generic fibre descent data from $K_2$ to $K$) corresponds to
our $\overline{\M}_j$.  However, this last step would require at least
several extra pages of rather technical work, so we prefer to proceed
as above instead.
\end{rem}

\section{The main theorems}\label{main results}Recall that we are
assuming that $F$ is a totally real field in which the prime $p$ is
totally ramified. We now prove the main
results of this paper, by combining the techniques of earlier sections
with the lifting machinery of Khare-Wintenberger, as interpreted by
Kisin. In particular, we use the following result.

\begin{thm}
  \label{thm:existence-of-global-lifts}
  Suppose that $p>2$ and that $\rhobar:G_F\to\GL_2(\Fpbar)$ is
  modular. Assume that $\rhobar|_{G_{F(\zeta_p)}}$ is irreducible. If $p=5$ and the projective image of $\rhobar$ is
  isomorphic to $\PGL_2(\F_5)$, assume further that
  $[F(\zeta_p):F]=4$.
  \begin{itemize}
  \item

Suppose that $\rhobar|_{G_{F_\p}}$ has a
  non-ordinary parallel potentially Barsotti-Tate lift of type
  $\tilde{\omega}^{m+n}\oplus\tilde{\omega}^{m}$. Then $\rhobar$ has a
  modular lift which is parallel  potentially Barsotti-Tate of type
  $\tilde{\omega}^{m+n}\oplus\tilde{\omega}^{m}$ and non-ordinary.
\item Suppose that $\rhobar$ has an ordinary modular lift. Suppose also that $\rhobar|_{G_{F_\p}}$ has an
  ordinary parallel potentially Barsotti-Tate lift of type
  $\tilde{\omega}^{m+n}\oplus\tilde{\omega}^{m}$. Then $\rhobar$ has a
  modular lift which is parallel potentially Barsotti-Tate of type
  $\tilde{\omega}^{m+n}\oplus\tilde{\omega}^{m}$ and ordinary.
  \end{itemize}
\
\end{thm}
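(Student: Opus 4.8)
The plan is to deduce the theorem from the modularity lifting theorems of Kisin \cite{kis04} (in the potentially Barsotti-Tate case) and of Geraghty \cite{ger} (in the ordinary case), combined with the Galois-theoretic lifting techniques of Khare--Wintenberger, in exactly the form in which these are assembled for Hilbert modular forms in \cite{gee061}; the content of the argument is simply the verification that the hypotheses match up. The relevant local input at $\p$ is the framed deformation ring $R^{\square,\tau}$ of $\rhobar|_{G_{F_\p}}$ parametrising parallel potentially Barsotti-Tate lifts of type $\tau=\tilde\omega^{m+n}\oplus\tilde\omega^m$ with determinant $\epsilon\psi$, for a fixed finite order character $\psi$ of order prime to $p$ lifting $(\det\rhobar)\cdot\omega^{-e}$ (such a $\psi$ exists since $\F^\times$ has no $p$-torsion). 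The hypothesis that $\rhobar|_{G_{F_\p}}$ admits such a local lift says precisely that $R^{\square,\tau}[1/p]\neq 0$, and when the given local lift is non-ordinary (respectively ordinary) it lies on a component of $R^{\square,\tau}$ whose $\Qpbar$-points are all non-ordinary (respectively ordinary).

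First I would run the lifting construction of \cite{gee061}: from the modular representation $\rhobar$, the character $\psi$, the chosen local lift at $\p$, crystalline/unramified conditions at the places outside a suitable finite set $S$, minimal-type conditions at the bad places, and an auxiliary set of Taylor--Wiles primes to rigidify the deformation problem, the Khare--Wintenberger method produces a continuous lift $\rho:G_F\to\GL_2(\Qpbar)$ of $\rhobar$, ramified at only finitely many places, with $\det\rho=\epsilon\psi$, which is parallel potentially Barsotti-Tate of type $\tau$ at $\p$ and which lies on the prescribed (non-ordinary, respectively ordinary) component of $R^{\square,\tau}$. One then applies the appropriate $R=\mathbb{T}$ theorem to deduce that $\rho$ is modular: in the non-ordinary case this is Kisin's potentially Barsotti-Tate modularity lifting theorem \cite{kis04}, whose hypotheses are precisely that $p$ is odd, $\rhobar$ is modular, $\rhobar|_{G_{F(\zeta_p)}}$ is absolutely irreducible, and the supplementary condition on $[F(\zeta_5):F]$ imposed when $p=5$ --- all of which we have assumed; in the ordinary case one uses an ordinary modularity lifting theorem in the style of \cite{ger}, and this is exactly where the extra hypothesis that $\rhobar$ already has an ordinary modular lift enters, since it supplies the modular point on the ordinary deformation ring needed to identify it with a non-zero Hecke algebra and to run the patching argument. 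Reading off from the construction that $\rho$ is non-ordinary (respectively ordinary) completes the proof.

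The step requiring the most care --- and the reason the theorem is quoted from \cite{gee061} rather than proved here --- is ensuring that the local condition ``potentially Barsotti-Tate of type $\tau$ with all pairs of labelled Hodge--Tate weights $\{0,1\}$ and determinant $\epsilon\psi$'', i.e.\ being \emph{parallel} in the sense of Definition~\ref{defn:barsotti tate lifts}, defines a local deformation problem of the expected dimension, so that the obstruction-theoretic count in the Khare--Wintenberger argument goes through, and, in the ordinary case, that the ordinary local lift one starts from and the ordinary global modular lift one assumes to exist can be arranged to lie on the same component of the ordinary local deformation ring at $\p$. Both of these points are dealt with in \cite{gee061}, to which we refer for the details.
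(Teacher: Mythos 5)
Your proposal follows essentially the same route as the paper: the paper's entire proof is the observation that the statement is a special case of Corollary 3.1.7 of \cite{gee061}, and your sketch is just an unpacking of the Khare--Wintenberger plus modularity-lifting machinery behind that corollary, with the details ultimately deferred to the same reference. One small caveat: the ordinary case in \cite{gee061} rests on Kisin-style potentially Barsotti--Tate $R=\mathbb{T}$ theorems (the hypothesis of an ordinary modular lift being needed to supply a modular point on the ordinary component), rather than on an ordinary modularity lifting theorem in the style of \cite{ger}, which the present paper cites only for a criterion for ordinarity; but since you defer the details to \cite{gee061}, this does not affect the correctness of the argument.
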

\begin{proof}
  This is a special case of Corollary 3.1.7 of \cite{gee061}.
\end{proof}
In combination with the local computations of section \ref{sec:
  crystalline chars}, this shows us that in most cases if $\rhobar|_{G_{F_\p}}$ is
semisimple, and $\sigma_{m,n}\in
W^?(\rhobar)$, then $\rhobar$ has a modular lift which is parallel potentially Barsotti-Tate of
type $\tilde{\omega}^{m+n}\oplus\tilde{\omega}^n$ (the exceptional
cases being those where the only local lifts of this type are
ordinary, and $\rhobar$ is not known to have an ordinary lift). By Lemma
\ref{lem:typesversusweights}, this means that $\rhobar$ is modular of
weight $\sigma_{m,n}$ or $\sigma_{m+n,p-1-n}$. However, we can
frequently guarantee that this lift is non-ordinary, and the weight
cycling techniques of section \ref{sec: weight cycling} then give the
following far more useful result.
\begin{thm}\label{thm:non-ordinary lifts and non-ordinary weights}Suppose that $p>2$ and that $\rhobar:G_F\to\GL_2(\Fpbar)$ is
  modular. Assume that $\rhobar|_{G_{F(\zeta_p)}}$ is irreducible. If $p=5$ and the projective image of $\rhobar$ is
  isomorphic to $\PGL_2(\F_5)$, assume further that
  $[F(\zeta_p):F]=4$. If
  $\rhobar|_{G_{F_{\p}}}$ has a non-ordinary parallel potentially Barsotti-Tate
  lift of type $\tilde{\omega}^{m+n}\oplus\tilde{\omega}^{m}$, $0\leq
  n\leq p-1$, then
  $\rhobar$ is modular both of weight $\sigma_{m,n}$ and of weight $\sigma_{m+n,p-1-n}$.
\end{thm}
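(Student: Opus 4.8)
The plan is to derive the statement by concatenating the two principal ingredients already in place. First I would invoke Theorem~\ref{thm:existence-of-global-lifts}: its hypotheses --- $p>2$, $\rhobar$ modular, $\rhobar|_{G_{F(\zeta_p)}}$ absolutely irreducible, and the supplementary condition on the projective image when $p=5$ --- are precisely the standing hypotheses here. Applying its first bullet to the assumed \emph{local} non-ordinary parallel potentially Barsotti-Tate lift of $\rhobar|_{G_{F_\p}}$ of type $\tilde{\omega}^{m+n}\oplus\tilde{\omega}^{m}$ produces a \emph{global} modular lift $\rho:G_F\to\GL_2(\Qpbar)$ of $\rhobar$ which is parallel potentially Barsotti-Tate of the same type and non-ordinary.

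The second step is to feed this $\rho$ into Proposition~\ref{prop: non-ordinary global  lift gives both weights via cycling}. Its hypothesis --- that $\rhobar$ admits a lift to a modular representation that is parallel potentially Barsotti-Tate of type $\tilde{\omega}^{m+n}\oplus\tilde{\omega}^{m}$ and not ordinary, with $0\le n\le p-1$ --- is now verified (the range of $n$ being part of our hypothesis), and its conclusion is exactly that $\rhobar$ is modular of weight $\sigma_{m,n}$ and of weight $\sigma_{m+n,p-1-n}$. That completes the argument.

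I do not expect a genuine obstacle at this stage: all the substance resides in the earlier sections --- the weight cycling mechanism of Section~\ref{sec: weight cycling}, including the Iwahori-level short exact sequences and the duality underlying Corollary~\ref{cor:weightcyclingdualversion}; the type-to-weight dictionary of Lemma~\ref{lem:typesversusweights} coming from the inertial local Langlands correspondence; and the Khare--Wintenberger/Kisin modularity lifting machinery imported via \cite{gee061}. The only care needed is bookkeeping: confirming that the normalisation of the inertial type $\tilde{\omega}^{m+n}\oplus\tilde{\omega}^{m}$ is consistent between Theorem~\ref{thm:existence-of-global-lifts} and Proposition~\ref{prop: non-ordinary global  lift gives both weights via cycling}, and that ``modular lift'' in the former is the same notion of modularity (a Hilbert eigenform of parallel weight~$2$, equivalently an eigenform on our definite quaternion algebra whose associated Galois representation reduces to $\rhobar$) used in the weight cycling argument. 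Granting this, the theorem is immediate from the two cited results.
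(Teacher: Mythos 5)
Your proposal is correct and matches the paper's own proof exactly: the paper likewise applies the first part of Theorem~\ref{thm:existence-of-global-lifts} to obtain a non-ordinary modular lift which is parallel potentially Barsotti-Tate of type $\tilde{\omega}^{m+n}\oplus\tilde{\omega}^{m}$, and then concludes by Proposition~\ref{prop: non-ordinary global  lift gives both weights via cycling}. No gap; the bookkeeping points you flag are indeed the only things to check and they hold with the paper's conventions.
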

\begin{proof} By Theorem \ref{thm:existence-of-global-lifts}, there is a modular
  lift of $\rhobar$ which is parallel potentially Barsotti-Tate of type
  $\tilde{\omega}^{m+n}\oplus\tilde{\omega}^{m}$, and which is
  non-ordinary. The result follows from Proposition \ref{prop: non-ordinary global  lift gives both weights via cycling}.
\end{proof}

We now extract some consequences from this result. Suppose that
$\rhobar|_{G_{F_\p}}$ is semisimple. Then we
have already proved that $W(\rhobar)$, the set of weights $\sigma$ for
which $\rhobar$ is modular of weight $\sigma$, is contained in
$W^?(\rhobar)$ (this is Corollary \ref{cor:tame case, necessary conditions on galois rep}). We can frequently deduce the converse implication,
showing that if $\sigma_{m,n}\in W^?(\rhobar)$ then $\sigma_{m,n}\in
W(\rhobar)$.  In particular we have the following.

\begin{corollary}\label{cor:high ramification or irreducible}Suppose that $p>2$ and that $\rhobar:G_F\to\GL_2(\Fpbar)$ is
  modular. Assume that $\rhobar|_{G_{F(\zeta_p)}}$ is irreducible. If $p=5$ and the projective image of $\rhobar$ is
  isomorphic to $\PGL_2(\F_5)$, assume further that
  $[F(\zeta_p):F]=4$. Suppose that $\rhobar|_{G_{F_{\p}}}$ is
  semisimple.
  \begin{enumerate}
  \item  If $\rhobar|_{G_{F_{\p}}}$ is irreducible or $e \ge p$, then
    $\rhobar$ is modular of weight $\sigma$ if and only if $\sigma\in
    W^{?}(\rhobar)$.
 \item If $e \le p-1$, then
   $\rhobar$
is modular of weight $\sigma$ if and only if $\sigma \in
W^{?}(\rhobar)$
except possibly if $\sigma = \sigma_{m,n}$
and
 $$\rhobar|_{I_{F_{\p}}}\cong \left(\begin{matrix}\omega^{m+n+e} & 0\\0& \omega^{m}
\end{matrix}\right)$$ with $n+e \leq p-1$ or $n=p-1$.
  \end{enumerate}

\end{corollary}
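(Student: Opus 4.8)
The inclusion $W(\rhobar)\subseteq W^{?}(\rhobar)$ is Corollary~\ref{cor:tame case, necessary conditions on galois rep}, so in each part only the reverse inclusion needs proof. Fix $\sigma=\sigma_{m,n}\in W^{?}(\rhobar)$ and, via Definition~\ref{def: the predicted weights}, an integer $1\le x\le e$ exhibiting this membership. The strategy is the same in every case: produce a parallel potentially Barsotti--Tate lift of $\rhobar|_{G_{F_{\p}}}$ of type $\widetilde{\omega}^{m+n}\oplus\widetilde{\omega}^{m}$ which is \emph{non-ordinary}, and then apply Theorem~\ref{thm:non-ordinary lifts and non-ordinary weights} (which combines the Khare--Wintenberger/Kisin lifting of Theorem~\ref{thm:existence-of-global-lifts} with weight cycling) to conclude that $\rhobar$ is modular of weight $\sigma_{m,n}$. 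The relevant local lifts were constructed in Section~\ref{sec: crystalline chars}; the issue is only whether a non-ordinary representative is available.

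First suppose $\rhobar|_{G_{F_{\p}}}$ is irreducible. Then $\rhobar|_{I_{F_{\p}}}$ is non-scalar: twisting reduces to the case that $\rhobar|_{I_{F_{\p}}}$ is trivial, whence $\rhobar|_{G_{F_{\p}}}$ is unramified, factors through the procyclic group $G_{F_{\p}}/I_{F_{\p}}$, and is reducible, a contradiction. For an irreducible $\rhobar|_{G_{F_{\p}}}$ the two characters appearing in $\rhobar|_{I_{F_{\p}}}$ are interchanged by $\Gal(L/F_{\p})$, where $L/F_{\p}$ is the unramified quadratic extension, whereas powers of $\omega$ are $\Gal(L/F_{\p})$-invariant; hence the niveau~$1$ possibility $\omega^{m+n+x}\oplus\omega^{m+e-x}$ would force $\rhobar|_{I_{F_{\p}}}$ to be scalar and is therefore excluded, so $\rhobar|_{I_{F_{\p}}}$ has the niveau~$2$ shape and Lemma~\ref{lem:existence of lifts in the tame niveau 2 case} supplies a parallel potentially Barsotti--Tate lift $\rho$ of type $\widetilde{\omega}^{m+n}\oplus\widetilde{\omega}^{m}$. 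Any lift of an irreducible $\rhobar|_{G_{F_{\p}}}$ is automatically non-ordinary: if $\rho$ were ordinary then $\rho|_{I_{F_{\p}}}$ would contain a unique line on which inertia acts through a character of infinite order (a finite-order twist of the cyclotomic character), and since $G_{F_{\p}}$ preserves the cyclotomic character it preserves this line, so $\rho$, and hence $\rhobar|_{G_{F_{\p}}}$, would be reducible. Theorem~\ref{thm:non-ordinary lifts and non-ordinary weights} then shows $\rhobar$ is modular of weight $\sigma_{m,n}$. This settles part~(1) when $\rhobar|_{G_{F_{\p}}}$ is irreducible and shows that part~(2) has no exception in that case.

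Now suppose $\rhobar|_{G_{F_{\p}}}$ is reducible; being semisimple, $\rhobar|_{G_{F_{\p}}}\cong\chi_{1}\oplus\chi_{2}$ and $\rhobar|_{I_{F_{\p}}}$ is a sum of two powers of $\omega$. A niveau~$2$ witness then forces $\rhobar|_{I_{F_{\p}}}$ to be scalar, and one checks that in that degenerate case a niveau~$1$ witness $1\le x\le e$ is also available; so in all cases $\rhobar|_{I_{F_{\p}}}\cong\omega^{m+n+x}\oplus\omega^{m+e-x}$ for some $1\le x\le e$. The hypotheses of Lemma~\ref{lem:existence of lifts in the tame niveau 1 case} hold ($\rhobar|_{G_{F_{\p}}}$ is decomposable, being semisimple and reducible), and that lemma produces a non-ordinary parallel potentially Barsotti--Tate lift of type $\widetilde{\omega}^{m+n}\oplus\widetilde{\omega}^{m}$ unless $x=e$ and moreover $n+e\le p-1$ or $n=p-1$ (the latter only when $e\le p-1$). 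If $e\ge p$ then $e>p-1$, this obstruction never arises, and Theorem~\ref{thm:non-ordinary lifts and non-ordinary weights} gives modularity of $\sigma_{m,n}$; together with the irreducible case this proves part~(1). If $e\le p-1$, the situations left untreated are precisely $x=e$ together with $n+e\le p-1$ or $n=p-1$, and there $\rhobar|_{I_{F_{\p}}}\cong\left(\begin{smallmatrix}\omega^{m+n+e}&0\\0&\omega^{m}\end{smallmatrix}\right)$ — exactly the exceptional form in part~(2).

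The new input, the non-ordinary local lifts, is already provided by Section~\ref{sec: crystalline chars}, so the remaining work is combinatorial, and that is where I expect the only real difficulty: (i) checking, in the reducible case, that a niveau~$2$ witness for $\sigma_{m,n}\in W^{?}(\rhobar)$ really does reduce to a niveau~$1$ witness with $x$ in the range $[1,e]$, a delicate computation with the orders of $\omega$, $\omega_{\sigma_{1}}$, $\omega_{\sigma_{2}}$ and the relations $\omega_{\sigma_{2}}=\omega_{\sigma_{1}}^{p}$ and $\omega_{\sigma_{1}}\omega_{\sigma_{2}}=\omega$; and (ii) confirming that the stated exceptional set is exactly the set where the method stalls, using that by Lemma~\ref{lem:typesversusweights} the weight $\sigma_{m,n}$ can only be reached through a lift of type $\widetilde{\omega}^{m+n}\oplus\widetilde{\omega}^{m}$, so that when this type's niveau~$1$ construction with $x=e$ is forced to be ordinary and no other $x$ works, the ``except possibly'' clause is sharp. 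The determinant relation of Lemma~\ref{lem:determinant of modular of some weight} is the book-keeping tool for matching $\chi_{1},\chi_{2}$ with $m$, $n$, $x$ throughout.
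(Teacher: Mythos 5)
Your proposal is correct and takes essentially the same route as the paper's proof: the ``only if'' direction is Corollary~\ref{cor:tame case, necessary conditions on galois rep}, and the ``if'' direction combines Theorem~\ref{thm:non-ordinary lifts and non-ordinary weights} with the local lifts of Lemma~\ref{lem:existence of lifts in the tame niveau 1 case} (reducible case) and Lemma~\ref{lem:existence of lifts in the tame niveau 2 case} (irreducible case), the exceptional set in part (2) being precisely where the niveau-$1$ lemma only produces ordinary lifts. The details you flag as remaining work are genuine but correct and are left implicit in the paper's own proof as well: irreducibility of $\rhobar|_{G_{F_\p}}$ does force the niveau-$2$ shape and makes any lift automatically non-ordinary, and in the reducible (necessarily scalar) case a niveau-$2$ witness does yield a niveau-$1$ witness $x'\in[1,e]$ (one checks $x'\equiv x-k \pmod{p-1}$ works, where $n+2x-e=(p+1)k$ with $k\in\{0,1\}$, the case $k=1$, $x=1$, $e<p-1$ being impossible since it would force $n\geq p$).
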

\begin{proof} Write $\sigma = \sigma_{m,n}$.  As already remarked, the ``only if'' direction is
  Corollary \ref{cor:tame case, necessary conditions on galois rep}.
For the ``if'' direction, 
by Theorem \ref{thm:non-ordinary lifts and non-ordinary weights} it
suffices to be able to produce a non-ordinary parallel potentially
Barsotti-Tate lift of $\rhobar |_{G_{F_\p}}$ of type
$\tilde{\omega}^{m+n}\oplus\tilde{\omega}^{m}$.   Since we assume $\sigma_{m,n} \in
W^?(\rhobar)$, the representation $\rhobar |_{I_{F_\p}}$ is as in Definition~\ref{def: the predicted weights}.
Then existence of the desired lift follows from 
 Lemma
 \ref{lem:existence of lifts in the tame niveau 1 case} when $\rhobar
 |_{G_{F_\p}}$ is reducible, and from Lemma
  \ref{lem:existence of lifts in the tame niveau 2 case} when $\rhobar
  |_{G_{F_\p}}$ is irreducible.  Note that in part (2), the
  exceptional cases are precisely the ones where none of the lifts
  provided by Lemma~\ref{lem:existence of lifts
    in the tame niveau 1 case} are non-ordinary.
\end{proof}

Note that there are at most four exceptional cases in part (2) of
Corollary~\ref{cor:high ramification or irreducible}: there are
two ways of ordering the diagonal characters, and each ordering will
correspond either to one or two values of $n$ (if $n \not\equiv 0$ or $n\equiv 0 \pmod{p-1}$ respectively).

In fact, if we assume in addition that $\rhobar$ has an ordinary
modular lift, then we are able to dispose of most of these exceptional
cases. This relies on something of a combinatorial coincidence; it
turns out that in most cases where $\sigma_{m,n}\in W^?(\rhobar)$ but
$\rhobar|_{G_{F_\p}}$ has only ordinary lifts of type
$\tilde{\omega}^{m+n}\oplus\tilde{\omega}^{m}$, then
$\sigma_{m+n,p-1-n}\notin W^?(\rhobar)$, so the combination of Theorem
\ref{thm:existence-of-global-lifts}, Lemma
\ref{lem:typesversusweights} and Corollary \ref{cor:tame case,
  necessary conditions on galois rep} shows that in fact
$\sigma_{m,n}\in W(\rhobar)$.
\begin{corollary}
  \label{cor:lowramificationniveauonemainresult}Suppose that $p>2$ and that $\rhobar:G_F\to\GL_2(\Fpbar)$ is
  modular. Assume that $\rhobar|_{G_{F(\zeta_p)}}$ is irreducible. Suppose that $e\leq p-1$ and
  that $\rhobar|_{G_{F_\p}}$ is semisimple and
  reducible. Suppose further that $\rhobar$ has an ordinary modular lift. If $p=5$ and the projective image of $\rhobar$ is
  isomorphic to $\PGL_2(\F_5)$, assume further that
  $[F(\zeta_p):F]=4$. Suppose that $\sigma_{m,n}\in W^?(\rhobar)$. If $n=p-1$,
  suppose that $$\rhobar|_{I_{F_{\p}}}\ncong
  \left(\begin{matrix}\omega^{m+e} & 0\\0&
      \omega^{m} \end{matrix}\right).$$ Then $\rhobar$ is modular
of weight $\sigma_{m,n}$.

\end{corollary}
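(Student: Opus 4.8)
The plan is to dispose of the cases left open by Corollary~\ref{cor:high ramification or irreducible}(2), using the assumed ordinary modular lift; the key new input will be a combinatorial fact about $W^{?}(\rhobar)$. Write $\sigma=\sigma_{m,n}$. Since its hypotheses all hold here, Corollary~\ref{cor:high ramification or irreducible}(2) already shows that $\rhobar$ is modular of weight $\sigma$ unless $\rhobar|_{I_{F_{\p}}}\cong\omega^{m+n+e}\oplus\omega^{m}$ with $n+e\le p-1$ or $n=p-1$. Because $\omega^{p-1}=1$, in the case $n=p-1$ this representation is $\omega^{m+e}\oplus\omega^{m}$, which is excluded by hypothesis; so I may assume that $\rhobar|_{I_{F_{\p}}}\cong\omega^{m+n+e}\oplus\omega^{m}$ with $n+e\le p-1$, and in particular $0\le n\le p-2$.

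Next I would produce the relevant local lift and feed it into the global machinery. Applying Lemma~\ref{lem:existence of lifts in the tame niveau 1 case} with $x=e$ (the decomposability hypothesis there holds because $\rhobar|_{G_{F_{\p}}}$ is semisimple) yields a parallel potentially Barsotti-Tate lift of $\rhobar|_{G_{F_{\p}}}$ of type $\widetilde{\omega}^{m+n}\oplus\widetilde{\omega}^{m}$, and when $x=e$ the lift constructed in the proof of that lemma is ordinary, since one of its two crystalline constituents is then an unramified twist of the cyclotomic character. As $\rhobar$ has an ordinary modular lift by hypothesis, the second part of Theorem~\ref{thm:existence-of-global-lifts} promotes this to a \emph{modular} lift of $\rhobar$ which is parallel potentially Barsotti-Tate of type $\widetilde{\omega}^{m+n}\oplus\widetilde{\omega}^{m}$. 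By Lemma~\ref{lem:typesversusweights}, together with Lemma~\ref{441}, $\rhobar$ is then modular of some Jordan--H\"older factor of $\sigma(\tau)\otimes_{\bigO}\F$, where $\tau=\widetilde{\omega}^{m+n}\oplus\widetilde{\omega}^{m}$: if $n=0$ the only such factor is $\sigma_{m,0}=\sigma$ and we are done, while if $1\le n\le p-2$ the two (distinct) factors are exactly $\sigma_{m,n}$ and $\sigma_{m+n,p-1-n}$.

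It therefore remains, when $1\le n\le p-2$, to show that $\sigma_{m+n,p-1-n}\notin W^{?}(\rhobar)$; granting this, since $W(\rhobar)\subseteq W^{?}(\rhobar)$ by Corollary~\ref{cor:tame case, necessary conditions on galois rep}, the representation $\rhobar$ cannot be modular of weight $\sigma_{m+n,p-1-n}$, so it is modular of weight $\sigma_{m,n}=\sigma$, as required. This is the ``coincidence'' referred to before the statement, and is the only step demanding real work. Concretely: because $\rhobar|_{G_{F_{\p}}}$ is reducible, $\rhobar|_{I_{F_{\p}}}$ is a sum of two powers of $\omega$, so the niveau-two clause of Definition~\ref{def: the predicted weights} cannot produce the weight $\sigma_{m+n,p-1-n}$ (it could only do so if $\rhobar|_{I_{F_{\p}}}$ had equal diagonal entries, i.e.\ if $n+e=p-1$, and that borderline case is then killed by a short direct computation), while the niveau-one clause would demand some $1\le x\le e$ with $\{\,m+n+e,\,m\,\}\equiv\{\,m+x,\,m+n+e-x\,\}\pmod{p-1}$; one then checks that either matching of these two unordered pairs forces $x\equiv n+e$ or $x\equiv 0\pmod{p-1}$, and, using $1\le n$, $n+e\le p-1$ and $e\le p-1$, neither congruence has a solution in the range $1\le x\le e$. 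Organising this case analysis — and in particular dealing cleanly with the borderline $n+e=p-1$ — is the main obstacle; everything else is assembly of results already established.
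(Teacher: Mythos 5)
Your proposal is correct and takes essentially the same route as the paper: reduce via Corollary~\ref{cor:high ramification or irreducible}(2) (equivalently, Lemma~\ref{lem:existence of lifts in the tame niveau 1 case}) to the case $\rhobar|_{I_{F_{\p}}}\cong\omega^{m+n+e}\oplus\omega^{m}$ with $n+e\leq p-1$, use the ordinary-lift hypothesis together with Theorem~\ref{thm:existence-of-global-lifts} and Lemma~\ref{lem:typesversusweights} to conclude modularity of $\sigma_{m,n}$ or $\sigma_{m+n,p-1-n}$, and then eliminate the latter by the same congruence argument (any $x$ would have to satisfy $x\equiv 0$ or $x\equiv n+e\pmod{p-1}$, impossible for $1\leq x\leq e$) combined with $W(\rhobar)\subset W^{?}(\rhobar)$, the only difference being that you argue contrapositively ($\sigma_{m+n,p-1-n}\notin W^{?}(\rhobar)$) where the paper argues by contradiction. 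Your extra attention to the niveau-two clause of Definition~\ref{def: the predicted weights}, which the paper passes over silently, is sound: for reducible $\rhobar|_{G_{F_{\p}}}$ that clause forces both summands to be powers of $\omega$ and hence equal, so $n+e=p-1$, and the deferred ``short direct computation'' does go through, since one would then need $(p+1)\mid x$ with $1\leq x\leq e\leq p-2$, which is impossible.
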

\begin{proof}
If $\rhobar|_{G_{F_\p}}$ has a non-ordinary parallel
  potentially Barsotti-Tate lift of type
  $\tilde{\omega}^{m+n}\oplus\tilde{\omega}^{m}$, then
by Corollary
\ref{cor:high ramification or irreducible}, $\rhobar$ is
  modular of weight $\sigma_{m,n}$.

Suppose now that $\rhobar|_{G_{F_\p}}$ does not have a non-ordinary
parallel  potentially Barsotti-Tate lift of type
  $\tilde{\omega}^{m+n}\oplus\tilde{\omega}^{m}$. By Lemma \ref{lem:existence of lifts in the tame niveau 1
    case}, we must have
 $$\rhobar|_{I_{F_{\p}}}\cong \left(\begin{matrix}\omega^{m+n+e} & 0\\0& \omega^{m}
\end{matrix}\right).$$
 Furthermore, either $n+e\leq p-1$, or
$n=p-1$. The second case is precisely the case excluded by the
statement of this corollary.

Thus we must have $n+e\leq p-1$. By Lemma \ref{lem:existence of lifts in
  the tame niveau 1 case}, $\rhobar|_{G_{F_\p}}$ has a parallel potentially
Barsotti-Tate lift of type
$\tilde{\omega}^{m+n}\oplus\tilde{\omega}^{m}$, so that by the
assumption that $\rhobar$ has an ordinary modular lift, Theorem
\ref{thm:existence-of-global-lifts}, and Lemma
\ref{lem:typesversusweights}, $\rhobar$ is modular of weight
$\sigma_{m,n}$ or $\sigma_{m+n,p-1-n}$. If $n=0$ then we may conclude
further that $\rhobar$ is modular of weight $\sigma_{m,0}$. Assume for
the sake of contradiction that $\rhobar$ is not modular of weight
$\sigma_{m,n}$, so that we
may assume that $n\neq 0$, $n+e\leq p-1$, and $\sigma_{m+n,p-1-n}\in
W(\rhobar)$. In particular, by Corollary \ref{cor:tame case, necessary
  conditions on galois rep} we have $\sigma_{m+n,p-1-n}\in
W^?(\rhobar)$, and we also know that
$e<p-1$ (because $e\leq p-1-n<p-1$). Now,
examining the definition of $W^?(\rhobar)$ (Definition \ref{def: the
  predicted weights}), we see that we must have  $$\rhobar|_{I_{F_{\p}}}\cong \left(\begin{matrix}\omega^{m+x} & 0\\0& \omega^{m+n+e-x}

\end{matrix}\right)$$for some $1\leq x\leq e$. Comparing with the
expression above, we see that either $\omega^{m+x}=\omega^m$ or
$\omega^{m+x}=\omega^{m+n+e}$. The first possibility requires $x\equiv
0\text{ (mod }p-1)$, a contradiction as $1\leq x\leq e<p-1$. The
second requires  $x\equiv
n+e\text{ (mod }p-1)$, which is a contradiction because $1\leq x\leq
e<n+e\leq p-1$. The result follows.
\end{proof}





\bibliographystyle{amsalpha} 
\bibliography{tobybib08} 
\end{document}